\newtheorem{theorem}{Theorem}[section]
\newtheorem{lemma}[theorem]{Lemma}
\newtheorem{cor}[theorem]{Corollary}
\newtheorem{prop}[theorem]{Proposition}
\newtheorem{defn}[theorem]{Definition}
\newtheorem{ques}[theorem]{Question}
\newtheorem{conj}[theorem]{Conjecture}
\newtheorem{exam}[theorem]{Example}
\newtheorem{rem}[theorem]{Remark}
\newtheorem{idea}[theorem]{Idea} 
\newenvironment{proof}[1][Proof]{\textbf{#1.} }
{\hfill\rule{0.5em}{0.5em}\medskip}
\newenvironment{proof*}[1][Proof]{\textbf{#1.} }{}
\def\Kerekjarto{Ker\'ekj\'art\'o } 
\def\Hajek{H\'ajek }
\def\TOP{{\sf TOP}}
\def\DIFF{{\sf DIFF}}
\def\epsilon{\varepsilon}
\def\WB{$\omega$-bounded }
\begin{document}


\title{Dynamics of non-metric manifolds
\\}

\author{
Alexandre Gabard and David Gauld\footnote{Supported by the Marsden Fund Council from Government funding, administered by the Royal Society of New Zealand.}
}

\maketitle

\vskip -0.5cm

\newbox\quotation
\setbox\quotation\vtop{\hsize 10cm \noindent

\footnotesize {\it Je ne crois donc pas avoir fait une
{\oe}uvre inutile en \'ecrivant le pr\'esent M\'emoire; je
regrette seulement qu'il soit trop long; mais quand j'ai voulu
me restreindre; je suis tomb\'e dans l'obscurit\'e; j'ai
pr\'ef\'er\'e passer pour un un peu bavard.}
%
(Henri Poincar\'e, 1895, introducing his {\it Analysis
situs}.)


}

\hfill{\hbox{\copy\quotation}}

\medskip

\smallskip
\newbox\abstract
\setbox\abstract\vtop{\hsize 11cm \noindent

\footnotesize \noindent\textsc{Abstract.} An attempt is made
to extend some of the basic paradigms of dynamics---from the
viewpoint of (continuous) flows---to non-metric manifolds.
}

\centerline{\hbox{\copy\abstract}}

{\small \tableofcontents}

\normalsize

\section{Introduction}\label{sec1}

The present paper, by far not having the intrinsic
{\it charism} of
Poincar\'e's M\'emoire, may share some of
the supposed
discursive defects---albeit
in the more annoying way that our
loquaciousness,
instead of reflecting a wealth of new
insights, resulted rather from a poor understanding of a
somewhat exotic subject-matter.
  At any rate, like Poincar\'e, we shall
  put ourselves at the
cross-intersection of the two
paradigms ``manifolds'' and ``dynamics''. Albeit
extensively studied since
and under his impulsion (1880--1913), the subject still
contains certain {\it places r\'eput\'ees jusqu'ici
inabordables}\footnote{To quote again---this time
loosely---Poincar\'e \cite[p.\,82]{Poincare_1892}.}, even
in seemingly anodyne situations.
Elusive, open problems
belonging to the genre are: {\it Hilbert's 16th problem} for
the number and mutual disposition of {\it cycles limites} of
polynomial vector fields in the plane, or the question as to
whether the 3-space ${\Bbb R}^3$ (or the 3-sphere) admits a
flow
with {\it all} orbits dense ({\it Gottschalk conjecture}
formulated
in 1958 \cite{Gottschalk_1958},
along the tradition of
Poincar\'e-Hadamard-Birkhoff-Morse-Hedlund, not to mention
Markoff \cite{Markoff_1931}, and reposed by Smale on many
occasions).

Beside
such difficult questions, a sizeable portion of
theory
is well buoyed, forming so to speak a main-stream
of knowledge. Our
task will
merely reduce
to selecting
among such well-oiled
mechanisms, those
capable of a
{\it tele-transportation} beyond the metric realm. To gain
some
swing, we shall
briefly loop-back at some early history, aiding---at least
fictionally---to
circumvent better the nature of the ``main-stream'' in
question.

During the 19th century the concept of space enjoyed a golden
reconfiguration producing the fruitful concept of
{\it manifolds} (Gauss, Lobatschevsky,
Riemann, etc). This involves the idea of a space locally
modelled over some ``flat'' number-space like ${\Bbb R}^n$.
Gradually the ``manifold'' idea came to its clear-cut
precision but perhaps only through
specialisation of the much
broader concept of a {\it topological space} (Hilbert 1902,
Fr\'echet 1906, Hausdorff 1914). Among the earliest
axiomatisation of manifolds we
count: Weyl 1913 \cite{Weyl_1913}
(with a triangulated influence of Brouwer), \Kerekjarto 1923
\cite[p.\,5]{Kerekjarto_1923} for pure {\it topological
manifolds} (=$C^0$-manifolds), Veblen-Whitehead 1931
\cite{Veblen-Whitehead_1931} for differential manifolds.

Beside this purely ``spatial'' development, physics
(typically
Newtonian mechanics)
set forth the
description of natural phenomena
evolving in time via
differential equations. The associated flows
became Poincar\'e's fleuron to launch the great qualitative
programme
(stability, instability, chaos, etc). Eventually, an easy
abstraction
allows one to
think about {\it flows} without reference to the differential
calculus, as a topological group action of the real line
${\Bbb R}$ over a certain topological space $f\colon {\Bbb
R}\times X \to X$. (This shift of viewpoint
occurs
by \Kerekjarto 1925 \cite{Kerekjarto_1925},  Markoff 1931
\cite{Markoff_1931}, and in full virtuosity by Whitney 1933
\cite{Whitney33}.)

Both notions ``manifolds'' and ``dynamics'' turned out to be
quickly
intermingled in a ``space-time''
companionship.
E.g., as early as 1839, Gauss in his {\it Allgemeine Theorie
des Erdmagnetismus} \cite[Artikel 12,
p.\,134--135]{Gauss_1839}, noticed that the speculation
that the earth might not have a unique magnetic
north pole would
ineluctably
create some other ``hybrid'' pole which is neither a north nor
a south pole\footnote{To quote Gauss more accurately ({\it
loc.\,cit.}): ``Von einigen Physikern ist die Meinung
aufgestellt, dass die Erde zwei magnetische Nordpole und zwei
S\"udpole habe: [\dots]---Sehen wir von der wirklichen
Beschaffenheit der Erde ab, und fassen die Frage allgemein
auf, so k\"onnen allerdings mehr als zwei magnetische Pole
existiren: es scheint aber noch nicht bemerkt zu sein, dass
sobald z.\,B. zwei Nordpole vorhanden sind, es nothwendig
zwichen ihnen noch einen dritten Punkt geben muss, der
gleichfalls ein magnetischer Pol, aber eigentlich weder ein
Nordpol noch ein S\"udpol, oder, wenn man lieber will, beides
zugleich ist.'' This, and other early history, are
surveyed in the
famous  paper of Dyck~\cite{Dyck_1888}. Recall also
the r\^ole of Listing, both for its link with
Maxwell and as a forerunner of ``homology''.}. This can of
course be recognised as an early form of the
{\it Poincar\'e-Hopf index theorem} (and the related
hairy-balls theorems). An intermediate link from Gauss to
Poincar\'e is the Kronecker index (1869) which allowed many of
the forerunner, e.g. Bohl 1904 \cite{Bohl_1904} to anticipate
by some years some of the
contributions of Brouwer.
The
Poincar\'e-Hopf index
theorem (and the allied Lefschetz fixed point theory) appear
as
prominent outcome of this era,
altogether incarnating one of the most basic link between the
shape of a space and its dynamics (thus, a good candidate to
keep in mind for tele-transportation). Without
rushing on this, recall also the consequence that {\it a
closed manifold
accepts a non-stationary flow
if and only
if its Euler
characteristic vanishes} (Hopf~\cite{Hopf_1927},
\cite{Alexandroff-Hopf_1935}).
[In
passing,
the reverse implication does {\it not} seem to have been
firmly
established for $C^0$-manifolds (more on this in
Section~\ref{Morse-Thom:sec}). The direct
sense
follows, of course, from Lefschetz's extension 1937
\cite{Lefschetz_1937} of
his theory
to the class of compact metric ANR's.]

A ``general'' manifold---defined merely via the locally
Euclidean desideratum---because
of its naked elegance is capable of
various forms of perversities,
which are traditionally brought into more respectableness
through  additional restrictions: e.g., the Hausdorff
separation axiom, metrisability of the topology, compactness,
differential structures, Riemannian metrics, etc.


The modest philosophy of our
text is that while the specialisation to metric manifolds is
essential for ``quantitative'' problems (e.g., the
classification of
$2$-manifolds\footnote{Worked out by M\"obius 1863
\cite{Moebius_1863}, Jordan, Klein, Weichold 1883, Dyck 1888
\cite{Dyck_1888}
Dehn-Heegaard 1907, etc. and in the non-compact case
\Kerekjarto 1923 \cite{Kerekjarto_1923}.}) there is some
respectable ``qualitative''
principles which are sufficiently robust to hold
non-metrically. Examples of this vein are the Jordan
separation theory, the Schoenflies theorem
(any circle bounds a disc).
Thus,
the Poincar\'e-Bendixson theory---relying on the
{\it sack argument}
acting as a trap for
trajectories on a surface
where Jordan separation holds true---also propagates
non-metrically.
From it and Schoenflies, one can draw a {\it hairy ball
theorem} for $\omega$-bounded\footnote{Recall that a {\it
$\omega$-bounded} space is one such that any countable subset
admits a compact closure. This point-set
concept when particularised to manifolds allows one to hope
recovering
 some of the ``finistic'' virtues of compact manifolds beyond the
metric realm, cf. e.g. Nyikos's
bagpipe theorem to be discussed below.}
simply-connected surfaces, yielding a wide
extension of the fact that the $2$-sphere cannot be brushed.
(By a {\it brush}, we shall mean a flow without stationary
points---following the terminology of Beck~\cite{Beck_1958}.)

Beside this pleasant propagation of certain robust paradigms,
it must
confessed that some other  fails dramatically.
A typical disruption of this
kind occurs to {\it Whitney's flows}.
First, the
natural
desideratum of attaching to a
brush its induced foliation works universally in class $C^1$
and topologically  in low-dimensions $\le 3$ (Whitney 1938
\cite{Whitney_1938}), but not in higher-dimensions in view of
wild $C^0$-actions \`a la Bing (cf. Chewning 1974
\cite{Chewning_1974}).
Next, this process admits a
reverse
 engineering,
 which {\it creates}
 a
flow-motion
compatible with a given one-dimensional orientable foliation
(Whitney 1933 \cite{Whitney33}). Thinking of such {\it
Whitney's flows}---in rough caricature---as
obtained by parameterizing leaves
by arc length indicates
a definite {\it metric} sensitivity. It is not surprising
therefore, that one can easily experiment
non-metric failures
(Propositions~\ref{Whitney's_flow-failure-on-plane_pipe} and
\ref{Whitney's_flow_disruption_on_Moore}) even when all leaves
are {\it short} (i.e., metric). In the same vein,
Hopf's issue that a vanishing Euler characteristic is
sufficient
for
a
brush
lacks
a non-metric counterpart (Remark~\ref{reverse-engineering}
discusses the example of the connected sum of two long planes
${\Bbb L}^2$).

Accordingly, results
from the classical theory can
be
sorted out under the following three headings depending on
their
ballistic when catapulted outside the metric stratosphere:

(1) {\it Stable theories and theorems (``passe-partout'' {\rm
in Grothendieck's
jargon})}:
those
sufficiently robust as
to hold non-metrically. Examples: Jordan, Schoenflies,
Poincar\'e-Bendixson, two-dimensional hairy-ball theorems,
{\it phagocytosis}, i.e. the aptitude for a cell (chart) to
engulf any
countable subset of a manifold, cf. Gauld
\cite{Gauld_2009}\footnote{In the compact case such
phagocytosis appears in the work of Morton Brown
\cite{Brown_1962}, and Doyle-Hocking.}. This consequence of
Morton Brown's monotone-cell-union theorem,
also turns out to have multiple dynamical repercussions, as we
shall see.

(2) {\it Unstable theorems and vacuous paradigms}: theorems
breaking down
outside the metric world (Example:
Whitney's flows,
Hopf's brushes when $\chi=0$,
Beck's technique for slowing-down flow lines); and paradigms
which do not
survive by
lacking any single non-metric representative: Lie group
structures, minimal flows, global parallelism (\`a la
Stiefel). [Of course we do {\it not} claim that those theorems
are less good that those of the first category (1), but rather
that their non-metric collapses
adumbrate
a deeper geometric
substance.]---And finally:

(3) {\it Chaotical (undecided) paradigms:}  principles
which as yet (under our fingers) could not be
ranked into one of the previous two headings.
Examples: Finiteness property for the singular homology of
$\omega$-bounded manifolds and Lefschetz fixed point
theorem, hairy-ball theorems for $\omega$-bounded manifolds
with
$\chi\neq 0$, existence of smooth structures in low-dimensions
$\le 3$ (Spivak-Nyikos question, ref. as in \cite{GaGa2010}),
existence of transitive flows on separable manifolds of
high-dimensions $\ge 3$ (\`a la
Oxtoby-Ulam~\cite{Oxtoby-Ulam_1941},
Sidorov~\cite{Sidorov_1968}, Anosov-Katok~\cite{Anosov_1974}).

For simplicity, we count as a subclass of (3) {\it truly
chaotical} ({\it undecidable}) results which are known to be
sensitive
on some
axiomatic beyond ZFC (Zermelo-Fraenkel-Choice). Example: {\it
perfect normality}, i.e. the possibility of cutting-out an
arbitrary closed set as the zero-locus of a real-valued
continuous function. [Work of M.\,E. Rudin, Zenor.]


\begin{figure}[h] \centering
    \epsfig{figure=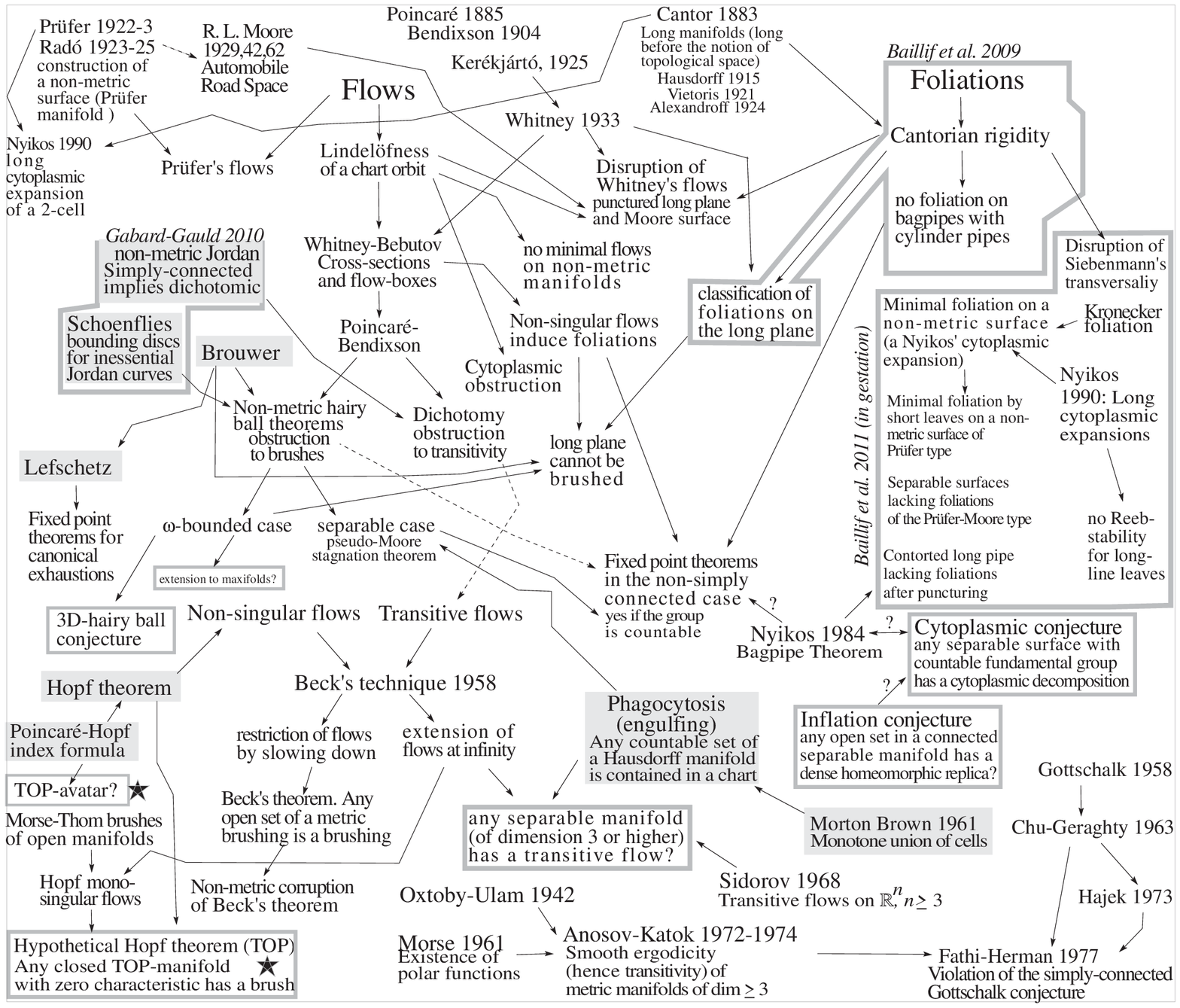,width=170mm}
\vskip-5pt\penalty0
\end{figure}

The by-standing synoptic diagram may help as a navigation
system;
it shows:

$\bullet$ Two islands delineated by frames,
packaging results
of previous papers by
the authors.
Since the present paper emphasises the viewpoint of flows, the
paper \cite{BGG} (concerned with foliations) is not an
absolute
prerequisite. The
note \cite{GaGa2010} (Jordan and Schoenflies in non-metrical
analysis situs) will be used in some arguments.

$\bullet$ Shaded regions marks classical
theorems that might
be adumbrative of certain
non-metric prolongations. Admittedly,
certain aspects of the non-metric theory of manifolds is just
a matter of transposing classical results
via transfinite repetition or by Lindel\"of approximation of
``small'' metric sub-objects. (The paper \cite{GaGa2010}
certainly
provides a good
illustration of this
reductionism.)

$\bullet$ Framed rectangles correspond to conjectures
delineating severe limitations in the authors knowledge. The
two starred frames correspond to purely metric questions, as
to whether  the Poincar\'e-Hopf index formula,
eventually also
the Hopf existence theorem for
brushes when $\chi=0$, generalise to $C^0$-manifolds (cf.
Section~\ref{Morse-Thom:sec} for some heuristics).


\subsection{Non-metric manifolds:
a short historiography}

Perhaps first, some few words looping back to the
sources
of non-metric manifold
theory.
The top of the iceberg emerged in Cantor's 1883
{\it Punktmannichfaltigkeiten}
\cite[p.\,552]{Cantor}, where the
{\it long ray} and
allied {\it long line}
were suggested.
A second generation of natural---indeed perfectly
geometric---examples
occurred to
Pr\"ufer and Rad\'o 1922/1925 \cite{Rado_1923},
\cite{Rado_1925}, as a byproduct of their investigations of
Weyl's treatment
of
{\it Die Idee der Riemannschen Fl\"ache}~\cite{Weyl_1913}.
A similar
vision occurred, seven years later in 1929,
to R.\,L. Moore in the form of an {\it Automobile Road Space}
(see the report by F. Burton
Jones~\cite{Jones_F._Burton_1997}\footnote{Quoting from  F.
Burton Jones~\cite{Jones_F._Burton_1997}: ``{\it Coming home
from the ``Boulder meeting'' in the summer of 1929, Moore
discovered his Automobile Road Space. [It] is an example of a
nonseparable complete Moore space which is a $2$-manifold.}''
This fancy name corresponds to what is
nowadays commonly termed
the {\it Pr\"ufer surface} or the {\it Pr\"ufer manifold} (cf.
Rad\'o 1925 \cite{Rado_1925}, Carath\'eodory 1950
\cite{Caratheodory_1950}, Nevanlinna 1953
\cite{Nevanlinna_1953}, Calabi-Rosenlicht 1953
\cite{Calabi-Rosenlicht_1953}, Ganea 1954
\cite{Ganea_1954}).}),
thereby
rediscovering---apparently independently---the
example of Pr\"ufer.
Moore's contribution (published only in
1942~\cite{Moore_1942}) is
a
certain ``twist''
in the Pr\"ufer construction producing  separable
simply-connected examples.
A~noteworthy feature of both the Pr\"ufer and Moore
constructions is that they are not isolated
specimens, but rather more
``fabrics''
engendering a variety of civilised, easy-to-visualise
examples. From the set-theoretical
viewpoint, the real
eclosion of the subject---yet
another
heritage of R.\,L. Moore's School\footnote{From the dynamical
viewpoint we already alluded to
R.\,H. Bing's impact---via wild topology---on a fundamental
question of Whitney in 1933 (on cellular cross-sections),
implying a radical divorce (already in the metrical realm, of
course) between the
topological and smooth approach to dynamical systems.
A similar divorce occurred earlier with (discrete)
transformation groups,
(recall Bing's involution of the $3$-sphere (1952)
\cite{Bing_1952}). More on this in
Section~\ref{Whitney's_flow}.}---is incarnated by the
contributions of M.\,E.
Rudin, Zenor. The 1984 paper
of
Nyikos~\cite{Nyikos84} is
the best initiation to the vertiginous
depth
of the non-metric universe (even in the $2$-dimensional,
simply-connected setting).
It
also
achieves a
subtle balance
of point-set versus
combinatorial
methods,
culminating to the
{\it bagpipe theorem}, showing that the subclass of
$\omega$-bounded surfaces behaves
like the familiar compact $2$-manifolds, 
save for the presence of {\it long pipes} emanating out from
the {\it bag}, while travelling at such
sidereal distances
as to
violate any metrication.
Those pipes could be thought of as circle-bundles over a
closed long ray, yet their real structure is in general
somewhat more mysterious. In particular they do not
necessarily admit a {\it canonical} exhaustion by compact
bordered cylinders. This plague of ``wild pipes'' will
cause us some
troubles, when attempting to
tele-transport the Lefschetz fixed point theory.

For more intelligibility and to the convenience of the
(non-specialised) reader,
let us recall that the {\it bordered\footnote{Following
Ahlfors-Sario, we employ {\it bordered manifold} as a synonym
of ``manifold-with-boundary'', which seems to us better than
the ``bounded manifolds'', used e.g. by J.\,H.\,C.
Whitehead---avoiding any conflict with ``$\omega$-bounded''.}
Pr\"ufer surface}, $P$, can be thought of as the open
upper-half plane
$H={\Bbb R}\times {\Bbb R}_{>0}$ plus some ideal points
materialised by rays rooted on the horizontal boundary line
$\{y=0\}$ and pointing
into $H$. All this data can be
naturally topologised, to produce a certain {\it bordered}
surface, $P$, whose interior is an open $2$-cell and whose
boundary
splits into a ``continuum'' $\frak c = {\rm card} \Bbb R$  of
components each homeomorphic to the real line. Thus, faithful
to the automobile jargon, this Pr\"ufer surface, $P$,
resembles a windscreen with a continuum of wiper (not just
two)
each rooted at the bottom of the screen. With this picture, it
is easy to visualise a non-singular flow on
$P$, akin to a windscreen wiper motion (cf.
Figure~\ref{Artist_views}, left hand-side). Besides, the {\it
Moore surface} is
just the quotient of the bordered Pr\"ufer surface, $P$, by
gluing each of its boundary components via the identification
$x\sim -x$ (compare Figure~\ref{Artist_views}, right
hand-side). The figure also
shows two other $2$-manifolds naturally deducible from the
bordered $P$, namely a collared version $P_{\rm collar}=P \cup
(\partial P \times [0,\infty))$ (which turns out to be the
same as the original Pr\"ufer surface described in Rad\'o 1925
\cite{Rado_1925}), plus its {\it double} $2P=P\cup P$, i.e.,
the gluing of $P$ with a replica of itself
(compare Calabi-Rosenlicht
1953~\cite{Calabi-Rosenlicht_1953}).

\begin{figure}[h] \centering
    \epsfig{figure=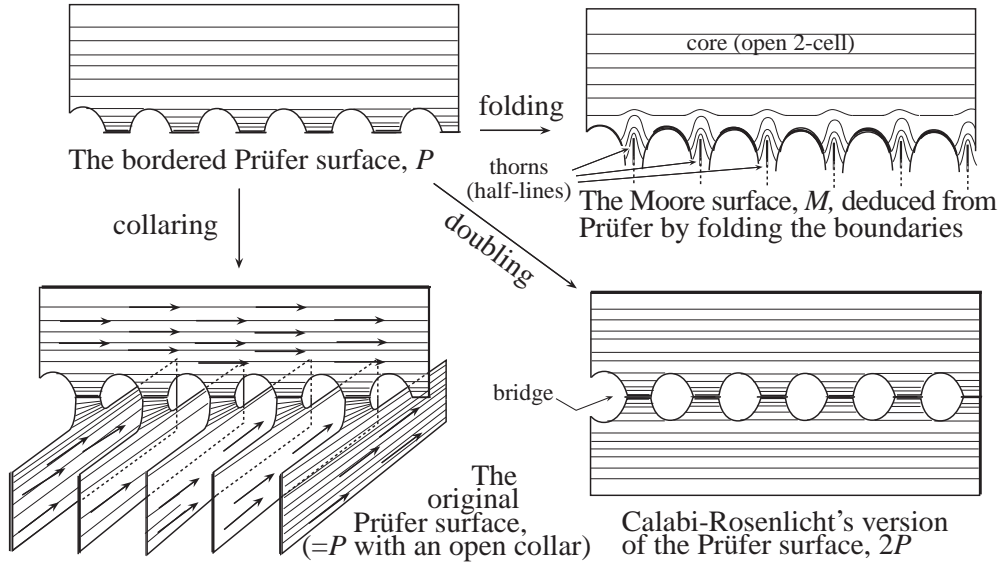,width=130mm}
  \caption{\label{Artist_views} Artist views of the Pr\"ufer
  and Moore surfaces (with galvanic currents)}
\vskip-5pt\penalty0
\end{figure}

\subsection{Dynamics of flows: overview of
results}

The issue---that many paradigms of dynamics holds true
non-metrically---has a
very simple
origin, rooted in the ``shortness'' of time,
modelled by the real line ${\Bbb R}$ (Dedekind-Cantor
continuum).
It implies, the orbit $f({\Bbb R}\times U)$ of any chart $U$
under
a flow
to be Lindel\"of, hence metric (Urysohn). Inside this metric
flow-invariant subspace, one can draw {\it cross-sections} and
{\it flow-boxes} (Whitney-Bebutov theory), yielding a
%
straightening of the motion in the vicinity of any
non-singular point.
One can then
ape the classical Poincar\'e-Bendixson theory,
and establish fixed-point theorems for flows (non-metric hairy
ball theorems)
under
weak point-set
assumptions (like $\omega$-boundedness, and later
separability),
plus
simple-connectivity.

Our
broad tolerance
for {\it non-metric} manifolds prompts the question of why we
are not
playing with longer groups,
e.g., the long line ${\Bbb L}$ as a model of time. Arguably,
any such model,
to deserve really the name, should
at least carry the structure of a topological group. In this
respect, a theorem of Garrett Birkhoff and Kakutani (1936)
\cite{Birkhoff_1936}
says that {\it a first countable topological group is
metrisable}; impeding
non-metric manifolds
entering
the
arena of topological ({\it a fortiori} Lie) groups.
Thus, dynamics may allow big spaces but is inherently
limited to short times.
(Here, foliations are more flexible, as leaves can easily
stretch into longness.)

Maybe the almost subconscious appeal of (continuous) flows
relies in part---beside the
  Kriegspiel or varied physico-chemical  interpretations---on
the tautological observation that the real line ${\Bbb R}$ is
the building brick of {\it any} manifold theory (whether
metric or not); a flow offering
thereby an introspection of
the manifold via its architectonic constituent.

Special attention was given---somewhat parallel to the
interest aroused by the ergodic hypothesis---when much of the space
is explored by starting from a {\it definite} resp. {\it any}
initial position. This leads to the classical notions of {\it
transitive}, resp. {\it minimal} flows as those having at
least {\it one} (resp. {\it all}\,) orbits dense in
phase-space. The
paradigmatic climax of minimal flows,
easily generated on tori (Kronecker), still leads to deep
questions like the {\it Gottschalk conjecture}
\cite{Gottschalk_1958}, already mentioned. (Assume
a
somniferous Riemannian
oracle saying
that  closed {\it positively curved}
manifolds lack a minimal
flow; then beside implying Gottschalk in {\it all\,}
dimensions, the result of Fathi-Herman (1977) would imply that
products of odd-spheres, e.g. ${\Bbb S}^3\times{\Bbb S}^3$,
lack positive curvature,
cracking partially
H. Hopf's puzzle from the 1930's.)
Surprisingly, the Gottschalk
inquiry takes a much simpler
{\it
tournure}
in the non-metric world: the Lindel\"ofness of any chart-orbit
$f({\Bbb R}\times U)$
{\it rules out} the  existence of any minimal
flow on {\it all} non-metric manifolds (in a single stroke!).
 This
fits into the picture
that non-metric manifolds cannot
concentrate
too
rich structures:  no group structures compatible with their
topology (Birkhoff-Kakutani), no minimal systems,
nor for instance a global {\it Fernparallelismus} in the sense
of Stiefel\footnote{Recall the argument of J.\,A.
Morrow~\cite{Morrow_1969}: a trivialisation of the tangent
bundle allows one to introduce a Riemannian metric which in
turn will metricise the manifold topology, in the large.}.

Alas, all these
limitations do not close the subject, as  despite not having
all the symmetry perfection of the metric theory, we shall
attempt to argue that
there is still a rich and easy-to-experiment ``geometry'' of
flows on non-metric manifolds\footnote{Of course, the issue
that some non-metric manifolds are also capable of a rich
 ``geometry'' is by no mean a new age philosophy;
recall Calabi-Rosenlicht's solution
\cite{Calabi-Rosenlicht_1953} of Bochner's conjecture
(existence of complex-analytic structures on certain
non-metric manifolds of the Pr\"ufer type;
question also implicit in
Carath\'eodory~\cite[p.\,94]{Caratheodory_1932}.)}.
Since minimal flows are too
much demanded,
we
switch
attention to the weaker notions of non-singular flows (alias
{\it brushes})
resp. transitive flows and ask: {\it Which (non-metric)
manifolds
supports
them?}

Answers can be obtained by a mixture of combinatorial and
point-set
methods. Clearly, a transitive manifold deserves to be
separable (rational-times of a dense orbit); yet not
necessarily metric (Example~\ref{Kronecker-Pruefer}
considers a Kronecker flow on the torus, suitably Pr\"uferised
along a portion of orbit).

For metric surfaces, the
standard obstruction to transitivity is {\it dichotomy} (i.e.,
any embedded circle disconnects the surface). This is the
classical inference of Jordan separation on
Poincar\'e-Bendixson, which remains
activated non-metrically
(Lemma~\ref{dicho_implies_intransitive}). The stronger
Schoenflies property, to the effect that any circle bounds a
disc, acts as an obstruction to brushes, provided the surface
is $\omega$-bounded
(Theorem~\ref{omega-bounded_Hairy-ball-thm}).

Another noteworthy obstruction to brushes
involves
merely general topology (viz. Lindel\"ofness):  assume the
flow $f$ on $M$ admits a ``small'' (viz. Lindel\"of) {\it
propagator} (i.e., a subset $\Sigma\subset M$ such that the
restricted flow-map $f\colon{\Bbb R} \times \Sigma \to M$ is
surjective) then the image $M$
is Lindel\"of.

This simple fact
identifies many
surfaces (e.g., the Moore surface) lacking any brush; and more
generally those $n$-manifolds admitting a {\it cytoplasmic}
(or {\it core-thorn}) {\it decomposition}, as defined in
Section~\ref{CTD:Section}. The easy
argument is best visualised on the Moore surface, whose very
specific
morphology---consisting of a ``core'' (the open half-plane of
Pr\"ufer), into which
many thin ``thorns'' (semi-lines) are sticking in
(compare Figure~\ref{Artist_views})---implies readily the core
to be a propagator (under any brush).
In contradistinction,
when this small propagator obstruction is vacuous, we are
frequently able to construct
brushes on non-metric surfaces typically those of Pr\"ufer
type (Proposition~\ref{Pruefer_flow}); corroborating thereby
the aforementioned intuition of the windscreen wiper motion
(depicted on Figure~\ref{Artist_views}).

The
inaptitude of the Moore surface to
``brush'', leads
to the question, if a surface sharing {\it abstractly} its
most
distinctive topological traits
(namely simply-connectedness, separability, non-metrisability
and boundaryless) can support a brush.
Theorem~\ref{hairy_ball_thm:separable} proposes
a
negative answer,
positively interpretable as a hairy ball theorem for this
class of {\it pseudo-Moore} surfaces. Like the
$\omega$-bounded hairy ball, this dual separable version
derives from the same
ingredients (non-metric Schoenflies, Brouwer, and
Poincar\'e-Bendixson), plus an extra-quick
owing to the {\it phagocytosis principle} (\`a la Morton
Brown).

A certain ``duality'' seems to relate the paradigms of
``$\omega$-boundedness'' and ``separability'',
at different levels. First, the conjunction of both properties
forces compactness. This is why, non-metrically, they
represent two {\it totally} disjoint
streams of forces.
Second, the
analogy goes further than
the common
hairy-ball theorem, for
$\omega$-boundedness
is crystallized---not to say immortalized (at least in
$2$-dimensions)---into the bag-pipe decomposition of Nyikos,
while it seems rather likely that separability
relates to what we just called cytoplasmic
decompositions. Yet, this is not completely true as
exemplified by the separable doubled Pr\"ufer surface, $2P$,
which lacks a cytoplasmic decomposition (because it has a
brush). Thus, separability alone is not enough to have a
cytoplasm, but restricting the fundamental group to be trivial
(or even countable) might be sufficient? Such a cytoplasmic
structure theory might represent a certain interest, yet we
shall not address this question further. Its dynamical
consequence would be that {\it non-metric separable surfaces
with countable fundamental groups lack a brush}. Albeit,
derived
via a blatantly hypothetical route, this turns out
to be a trivial consequence of the separable hairy ball
theorem
(Corollary~\ref{separable_hairy_ball_pi_1_ctble:cor}).
Since we slightly deviated in
the
topological register, it
seems also
opportune to notice that $\omega$-boundedness, indeed the
weaker sequential-compactness, implies a form of
maximality (akin to the one of
closed manifolds),
effecting that such manifolds are {\it inextensible} (or {\it
maxifolds}); i.e., they cannot be embedded in a larger
connected manifold of the same dimensionality. (This follows
at once---via a clopen argument---
from the invariance of the domain, which ensures openness of
the image.)

Summing up, one sees---especially in two-dimensions---that the
basic paradigms of dynamics (Poincar\'e-Bendixson, Brouwer's
fixed-point theorem) plus the idea of small propagation permit
a fairly
accurate answer to the question of which manifolds admits a
brush resp. a transitive flow. The slightly ``botanical''
Section~\ref{Geography} is
adumbrative of the exhaustiveness of the theoretical
obstructions listed so far, by constructing surfaces with
prescribed topology and dynamics. Next, what about higher
dimensions?

Here our results get more fragmentary, yet some positive
things happen. For instance one is tempted to transplant the
Euler obstruction $\chi\neq 0$ to the existence of brushes
from the compact to the non-metric realm. This can be achieved
via the Lefschetz fixed-point theorem,
provided some control is put on the growing mode of the
manifold via so-called {\it canonical exhaustions}
(Proposition~\ref{Lefschetz-fppf}). Yet the full punch would
be a truly non-metric Lefschetz theory for $\omega$-bounded
manifolds materialised by the following optimistic
conjectures:
---(1) The singular homology of such a manifold is finitely
generated.
---(2) The non-vanishing of the Lefschetz number of a map is a
sufficient condition for the existence of a fixed point.
%
%
%
Recall that Jaworowski 1971 \cite{Jaworowski_1971} proves a
Lefschetz fixed point theorem for any (metric or not)
manifolds, but only for {\it compact maps}, i.e. those with
relatively compact image.

Finally, we shall briefly
address the topic of transitive flows.
By their very definition, manifolds are locally Euclidean,
allowing one to stretch about any point an
open set homeomorphic to the number-space, ${\Bbb R}^n$.
In many cases (spheres, tori, etc.)  such a chart may be
inflated until to cover a sizeable (indeed dense) portion of
the manifold.
The {\it phagocytosis principle}---to the effect that every
countable subset of a manifold, whether metric or not, is
contained in a chart---shows this to be a general feature of
separable manifolds (also when non-metric, e.g., the doubled
Pr\"ufer $2P$).
In view of this, and the
technique of
Beck (allowing one to ``extend'', after a suitable
time-change, a flow given on a small space to a larger one),
one might hope to construct transitive flows on (m)any
separable manifolds of dimension $\ge 3$.
Unfortunately, we failed to reach serious conclusions in that
direction, either by removing the parenthetical ``(m)'' of
``many'' to make it an ``any'' (or by locating a
counterexample). In other words, the well-known issue---that
in dimensions $\ge 3$ {\it all} metric manifolds are
transitive (Oxtoby-Ulam, Sidorov, Anosov-Katok)---remains
undecided for non-metric (separable) manifolds.

In conclusion, it seems that non-metric manifolds
split into two types of populations, {\it civilised}
(metric-like)
against
wild {\it barbarians}.
The plague of wild pipes impeded us to formulate a universal
$\omega$-bounded Lefschetz theory, and some separable
manifolds with a wild topology at infinity (outside a
phagocytosing dense chart) might troubleshoot the Beck
technique. Of course, in every-day practice
one mostly interacts
with civilised examples (of the Cantor, Pr\"ufer or Moore
type), yet the barbarians exist---as reported by some advanced
sentinels (e.g. Nyikos \cite{Nyikos84})---and potentially
causes troubles to a naive-minded propagation of paradigms
like those of Lefschetz or Beck. The suspense is {\it
intact}\,! (As a very vague guess, the duality discussed above
might suggest that barbarians are equi-distributed in both
classes $\omega$-bounded vs. separable, so that a failure by
Lefschetz would imply a failure by Beck, and vice versa?)

\section{Flows versus foliations}\label{Flows-vs-Foliations}

Most of this Section~\ref{Flows-vs-Foliations} is a survey of
metric results, with straightforward non-metric extensions
afforded by the chart orbit trick (viz. its Lindel\"ofness).
Thus, the reader not primarily interested in foliations, but
merely in flows (easier to define, albeit
of a wilder transverse nature) can skip
it, and move forward to Section~\ref{Flows:Section}, and
refers back to it when necessary.

A ($C^0$) {\it flow}
is a continuous action $f\colon {\Bbb R} \times X \to X$ of
the additive reals on a certain topological space $X$.
Each map $f_t$ defined by $f_t(x)=f(t,x)$ is a homeomorphism
of $X$.
A {\it fixed} or {\it stationary  point} of a flow is one
whose orbit, $f({\Bbb R}\times \{x\})$,  reduces to a point.
Usually, flows without fixed point are referred to as {\it
non-singular} or {\it non-stationary}.
Yet it is convenient to compactify the jargon (we follow
essentially Beck \cite{Beck_1958}, modulo a slight
compression):

\begin{defn}\label{Beck_brush:def} {\rm
%
A flow with no stationary points is called a \emph{brush}; and
a space
with a  brush is
a \emph{brushing}.
%
%
}\end{defn}

Given a flow one may consider the partition into orbits, and
expect---if both the space and the flow are sufficiently
regular (say $X$ a manifold and $f$ a brush)---a sort of
locally well-behaved geometric structure. This idea blossomed
first to the notion of {\it Kurvenschar} or {\it regular
family of curves} as defined by \Kerekjarto
\cite{Kerekjarto_1923}, Kneser \cite{Kneser24} and
Whitney~\cite{Whitney33}, and later
to the concept of {\it foliation} of Ehresmann-Reeb
(1944--1952). Thus, naively one would expect that the
partition into orbits of a brush on a manifold produces a
foliation. As we shall recall, this albeit
correct in low-dimensions $\le 3$, fails from dimension $4$,
upwards.

The other way around, given a one-dimensional orientable
foliation one may ask for a compatible flow, whose orbits
structure
generates the given foliation.
Though
non-canonical this reverse
procedure
works in full generality (modulo the metrisability axiom).

Summing up, the situation is as follows:

(1) the canonical map from
brushes to foliations is
foiled in high-dimensions $\ge 4$ (Bing-Chewning
\cite{Chewning_1974}), but well-defined in low-dimensions $\le
3$ (Whitney 1938 \cite{Whitney_1938}), and this even in the
non-metric
case (chart orbit trick).

(2) Vice versa, the non-canonical map from (oriented)
$1$-foliations to
brushes works
unconditionally in the metric realm (Whitney 1933
\cite{Whitney33}), but fails outside
(Propositions~\ref{Whitney's_flow-failure-on-plane_pipe} or
\ref{Whitney's_flow_disruption_on_Moore}).

\subsection{The foliation
induced by a
brush: Whitney-Bebutov theory}

Apart from detail of phraseology and a
non-metric shift, the following is due to Whitney:

\begin{theorem}\label{folklore} Let $f\colon{\Bbb R}\times M \to M$ be a
non-singular $C^0$-flow on a (non-metric) manifold.
Then the orbits of the flow induce a one-dimensional
oriented foliation on $M$, provided {\rm (i)} the flow is
$C^1$ or {\rm (ii)} the manifold is of dimension $n\le 3$.
\end{theorem}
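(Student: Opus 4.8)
The plan is to reduce the global statement to a purely local one: near any point $p\in M$ we must produce a chart in which the orbit partition becomes the partition of $\mathbb{R}^{n-1}\times\mathbb{R}$ into horizontal lines $\{c\}\times\mathbb{R}$, oriented by the flow direction. The key enabling fact, already emphasised in the excerpt, is the \emph{chart orbit trick}: the saturation $f(\mathbb{R}\times U)$ of any coordinate chart $U$ is Lindel\"of (being a continuous image of $\mathbb{R}\times\mathbb{R}^n$), hence metrisable by Urysohn. So every point $p$ lies in a metrisable, flow-invariant open submanifold $V=f(\mathbb{R}\times U)$, and it suffices to invoke the classical (metric) Whitney--Bebutov theory on $V$. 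Thus the whole theorem collapses to the assertion that, in the metric case, a non-singular flow admits local flow-boxes (a system of local sections transverse to the orbits), under either hypothesis (i) or (ii), and that the resulting local product charts are mutually compatible so as to assemble into a foliation.

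\textbf{Step by step.} First I would state and use the chart orbit trick to pass from $M$ to a metrisable invariant neighbourhood $V$ of an arbitrary point $p$; record that it is enough to foliate each such $V$ compatibly, since two such pieces agree on overlaps (the orbit partition is intrinsic, so the transition maps automatically respect it). Second, on the metric manifold $V$ I would construct, near $p$, a local cross-section $\Sigma$: a codimension-one topological submanifold through $p$ that meets the orbit of $p$ only at $p$ and is ``transverse'' in the sense that the map $(s,\sigma)\mapsto f(s,\sigma)$ from $(-\epsilon,\epsilon)\times\Sigma$ into $V$ is a homeomorphism onto a neighbourhood of $p$. Here the case split enters: in case (i), where $f$ is $C^1$, the infinitesimal generator is a nonvanishing $C^0$ (indeed we only need continuous) vector field, and one takes $\Sigma$ to be a small piece of an affine hyperplane transverse to the velocity vector at $p$, the flow-box theorem then being the topological implicit-function argument of Whitney--Bebutov; in case (ii), dimension $\le 3$, one appeals directly to Whitney's 1938 theorem \cite{Whitney_1938} that regular families of curves on $2$- and $3$-manifolds admit local sections (this is exactly the content that fails in dimension $\ge 4$ by Bing--Chewning \cite{Chewning_1974}). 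Third, having a flow-box homeomorphism $(-\epsilon,\epsilon)\times\Sigma\to W\subset V$, I would post-compose with a chart of $\Sigma$ to get a homeomorphism $W\cong (-\epsilon,\epsilon)\times\mathbb{R}^{n-1}$ carrying orbit arcs to the lines $(-\epsilon,\epsilon)\times\{c\}$; the first coordinate increases along the flow, giving the coorientation. Fourth, I would check the foliation axioms: the distinguished charts so produced cover $M$, and on an overlap the orbit arc through a point is intrinsic, so the change of coordinates sends plaques to plaques, i.e.\ has the form $(t,c)\mapsto (\alpha(t,c),\beta(c))$ with $\alpha$ monotone in $t$ — this is precisely a foliated atlas defining a one-dimensional oriented foliation.

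\textbf{The main obstacle} is the existence of a genuine local cross-section in the merely-continuous, low-dimensional case (ii): unlike case (i) there is no vector field to be transverse to, and one must extract a topological transversal from the regular-family-of-curves structure. This is the substantive input of Whitney's 1938 paper, and the honest thing to do is to cite it rather than reprove it; the delicate point one should at least comment on is that Whitney's hypotheses are met here because the orbit partition of a brush on a manifold of dimension $\le 3$ \emph{is} a regular family of curves (local uniqueness and continuous dependence come from the flow, local extendibility from non-singularity). Everything else — the Lindel\"of/metrisation reduction, the flow-box-to-product-chart conversion, the verification that the charts patch into a foliated atlas — is routine and can be stated without computation. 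One should also note explicitly that the construction is canonical (the foliation is determined by the flow), which is what makes the local pieces on different $V$'s automatically coherent and is the point contrasted, later in the paper, with the non-canonical reverse passage from foliations to flows.
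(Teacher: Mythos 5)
Your proposal is correct and follows essentially the same route as the paper's proof: the chart-orbit trick ($f({\Bbb R}\times U)$ is Lindel\"of, hence metric) reduces everything to the classical Whitney--Bebutov construction of cross-sections and flow-boxes, with the only substantive issue being the locally Euclidean character of the cross-section, settled by transversality to the velocity field in case (i) and by Whitney's 1933/1938 results in case (ii). Your additional remarks on the intrinsic (canonical) nature of the orbit partition and the patching of plaques are fine and merely make explicit what the paper leaves implicit.
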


\begin{proof}
In the metric case the Whitney-Bebutov theory\footnote{See
Whitney \cite[p.\,260 and 270]{Whitney33} and Bebutov 1940,
Nemytskii-Stepanov~\cite[p.\,333]{Nemytskii-Stepanov}.}
ensures the existence, through any non-singular point of the
flow,
of a local cross-section and an associated flow-box.
The metric proviso is
in fact immaterial as choosing a Euclidean chart $U$ around
any point, the chart orbit $f({\Bbb R}\times {U})$
is invariant and Lindel\"of (hence metric). 
A foliated structure follows
if one can
establish the locally Euclidean character of the
cross-section.
Whitney 1933 \cite{Whitney33} answers this
question for $n=2$ by
quoting a result of Hausdorff, while the case $n=3$ is treated
in Whitney 1938~\cite{Whitney_1938}
via his 1932
characterisation of the $2$-cell.
\end{proof}

\begin{rem}\label{warning}
{\rm
Whitney~\cite[p.\,259-260]{Whitney33} asked in 1933: {\it
given a regular family of curves
filling a region in ${\Bbb R}^n$ is there a
cross-section through any point which is a closed
$(n-1)$-cell?}  In a
related vein, O.~H\'ajek asked (1968) at the end of his
book~\cite[Problem 8, p.\,225]{Hajek_1968}
(almost verbatim): {\it Decide whether or not every continuous
dynamical system on a differential manifold is
isomorphic to a differential system.} In 1974, Chewning
\cite{Chewning_1974} provided the negative answer by
constructing a flow on ${\Bbb R}^4$ induced from a {\it
non-manifold factor} of ${\Bbb R}^4$, i.e. a non-locally
Euclidean space $X$ which crossed by ${\Bbb R}$ becomes
${\Bbb R}^4$. (Such spaces,
discovered about 1958 by Bing-Shapiro,
arise  by collapsing to a point a wild arc of ${\Bbb R}^3$.)
Chewning's negative solution to H\'ajek's problem also answers
the
1933 question of Whitney.
Regarding {\it compact} transformation groups,
non-smoothable actions were detected
earlier  in Bing~1952~\cite{Bing_1952}, showing an exotic
involution
on ${\Bbb S}^3$
by identifying the doubled
Alexander solid horned sphere
to ${\Bbb S}^3$.
(In both cases the relative dimension of the action is $3$.)

%
%

}\end{rem}

As in low dimensions $\le 3$ the relation from
brushes to foliations is
safe,
we may deduce:

\begin{cor}\label{longplane-no-brush} The long plane ${\Bbb L}^2$
lacks non-singular flows.
\end{cor}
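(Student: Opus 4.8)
The plan is to combine Theorem~\ref{folklore} with the structure theory of one-dimensional foliations on the long plane. Suppose, for contradiction, that ${\Bbb L}^2$ carries a brush $f$. Since $\dim {\Bbb L}^2 = 2 \le 3$, Theorem~\ref{folklore}(ii) applies and the orbit partition is a one-dimensional oriented foliation $\mathcal F$ on ${\Bbb L}^2$. So it suffices to show that ${\Bbb L}^2$ admits no (oriented, or even unoriented) one-dimensional foliation at all. The leaves of such a foliation are one-manifolds, hence copies of ${\Bbb R}$ or of ${\Bbb S}^1$; but a circle leaf in a simply-connected surface would bound (by the non-metric Schoenflies property invoked in the introduction, via \cite{GaGa2010}), forcing a singularity of the associated flow by Poincar\'e--Bendixson, contradicting that $f$ is a brush. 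Hence every leaf is a line.

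First I would invoke the Poincar\'e--Bendixson machinery already set up non-metrically: fix a point $p \in {\Bbb L}^2$ and follow its orbit. Because ${\Bbb L}^2$ is $\omega$-bounded, the chart-orbit trick and the sack argument force the $\omega$-limit set of $p$ to be nonempty and to contain either a fixed point or a periodic orbit; the brush hypothesis kills fixed points, and a periodic orbit is ruled out by simple-connectivity plus Schoenflies as above. This is essentially the content of Theorem~\ref{omega-bounded_Hairy-ball-thm} (the $\omega$-bounded hairy ball theorem): ${\Bbb L}^2$ is an $\omega$-bounded simply-connected surface satisfying Schoenflies, so it cannot be brushed. In fact, the cleanest route is simply to \emph{cite} Theorem~\ref{omega-bounded_Hairy-ball-thm} directly, once one checks that ${\Bbb L}^2$ meets its hypotheses: ${\Bbb L}^2$ is a surface (boundaryless), it is $\omega$-bounded (a product of two $\omega$-bounded long rays, each countable subset living in an initial segment with compact closure), it is simply connected (each long ray is contractible, hence so is the square), and it satisfies Schoenflies in the sense required (a circle in ${\Bbb L}^2$ is contained in a metrisable chart by phagocytosis, where the usual Schoenflies applies).

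The only real point needing care — and the step I expect to be the main obstacle — is verifying that ${\Bbb L}^2$ genuinely satisfies the exact hypotheses of Theorem~\ref{omega-bounded_Hairy-ball-thm} rather than some informal approximation of them, in particular the $\omega$-boundedness of the product and the Schoenflies/bounding-disc property in the non-metric setting. For $\omega$-boundedness: given a countable set $S \subset {\Bbb L}_{\ge 0} \times {\Bbb L}_{\ge 0}$, its two coordinate projections are countable in ${\Bbb L}_{\ge 0}$, hence each lies below some countable ordinal $\alpha$; then $\overline{S} \subset [0,\alpha] \times [0,\alpha]$, which is compact, so $\overline{S}$ is compact. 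For the bounding property: any embedded circle $C \subset {\Bbb L}^2$ is compact, hence (phagocytosis, \`a la Morton Brown) contained in an open chart $U \cong {\Bbb R}^2$; by the classical Schoenflies theorem $C$ bounds a disc in $U$, a fortiori in ${\Bbb L}^2$. Once these checks are in place, Theorem~\ref{omega-bounded_Hairy-ball-thm} yields the contradiction and the corollary follows. If one prefers a self-contained argument one re-runs the Poincar\'e--Bendixson sack argument directly on the chart-orbit of a single trajectory, but the economical proof is the two-line reduction to the $\omega$-bounded hairy ball theorem together with the hypothesis-checking just sketched.
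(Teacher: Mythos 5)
Your operative argument---reducing to the $\omega$-bounded hairy ball theorem (Theorem~\ref{omega-bounded_Hairy-ball-thm}) after checking that ${\Bbb L}^2$ is $\omega$-bounded, boundaryless and simply connected---is correct, and it is in fact one of the two alternative proofs the paper itself records for this corollary; there is no circularity, since Theorem~\ref{omega-bounded_Hairy-ball-thm} does not depend on this statement. The paper's \emph{primary} proof is the foliation route: by Theorem~\ref{folklore} a brush on ${\Bbb L}^2$ would induce a one-dimensional foliation all of whose leaves are \emph{short} (each orbit is a continuous image of ${\Bbb R}$, hence Lindel\"of), and the classification of foliations on ${\Bbb L}^2$ in \cite{BGG} excludes a foliation by short leaves. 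Your first paragraph misstates this step: it is \emph{false} that ${\Bbb L}^2$ admits no one-dimensional foliation at all (the horizontal foliation by long lines is one), so ruling out circle leaves and concluding ``every leaf is a line'' gives no contradiction by itself; the shortness of the leaves, together with the \cite{BGG} rigidity, is the essential input you omit. Since you abandon that route in favour of the hairy-ball reduction, this does not damage your final proof, but as written the first paragraph would not survive. Two smaller points: the long ray is \emph{not} contractible (it is only weakly contractible), so justify simple connectivity of ${\Bbb L}^2$ by noting that every loop lies in a compact box $[-\alpha,\alpha]^2\approx[0,1]^2$; and the phagocytosis principle as invoked in the paper engulfs \emph{countable} sets, so for an embedded circle you should instead observe directly that it lies in some $(-\alpha,\alpha)^2\approx{\Bbb R}^2$ by compactness---though this verification is redundant anyway, since Theorem~\ref{omega-bounded_Hairy-ball-thm} needs only simple connectivity, the Schoenflies property being supplied by \cite{GaGa2010}.
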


\begin{proof} Otherwise it would have a foliation by
short leaves, violating the classification given in
\cite[Corollary~7.7]{BGG}. An alternative
(non-foliated) proof
follows either
from Theorem~\ref{omega-bounded_Hairy-ball-thm}
(Poincar\'e-Bendixson approach) or from
Corollary~\ref{fppf-Ln} (Brouwer's fixed-point theorem), which
establishes the general case of ${\Bbb L}^n$.
\end{proof}

\subsection{Whitney's flows: creating motions compatible with
a foliation
}\label{Whitney's_flow-theorem}

The
shortest route from foliations
back to dynamics is the following result of
Whitney~\cite{Whitney33}, whose $2$-dimensional case goes back
to \Kerekjarto\!\!~\cite{Kerekjarto_1925}. This
is as follows, again apart from
matters of phraseology (i.e., regular families of curves
versus foliations):

\begin{theorem}\label{Kerek_Whitney's_flow:thm} (\Kerekjarto 1925, Whitney 1933)
Given an orientable one-foliation on a metric
$C^0$-mani\-fold, there is a compatible flow whose orbits
are the leaves of the foliation.
\end{theorem}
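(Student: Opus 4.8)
The plan is to produce the flow by integrating a vector field transverse... no — more precisely, by choosing a Riemannian metric on $M$, parameterizing each leaf of the foliation by arc-length (using the chosen orientation to fix the direction of increasing parameter), and checking that the resulting leaf-wise "translation by time $t$" assembles into a continuous flow $f\colon \mathbb{R}\times M\to M$. The metrisability hypothesis enters in two essential places: it gives us a Riemannian (or just a compatible metric) structure to measure arc-length along leaves, and it guarantees paracompactness/second-countability, which we need to patch local data globally and to invoke uniform continuity estimates on compacta.

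First I would set up the local model. Since the foliation is one-dimensional, around each point there is a foliated chart $\varphi\colon V\to \mathbb{R}^{n-1}\times\mathbb{R}$ in which the leaves are the slices $\{c\}\times\mathbb{R}$. On each such plaque, transport the chosen metric; the arc-length function $s\mapsto \ell_c(s)$ along the slice $\{c\}\times\mathbb{R}$ is a homeomorphism $\mathbb{R}\to\mathbb{R}$ (it is strictly increasing, continuous, and proper because the metric is positive and the leaf slice is a complete arc in the chart-induced geometry only after we are careful — here one must instead integrate arc-length \emph{within} the whole leaf, not within the plaque). So the cleaner route: define, for each leaf $L$ with its induced length metric, the unit-speed parameterization $\gamma_L\colon \mathbb{R}\to L$ (if $L$ has finite total length and is a circle, use $\mathbb{R}/c\mathbb{Z}$; if $L$ is a non-closed leaf it is a $1$-manifold, hence homeomorphic to $\mathbb{R}$, and finite or infinite length both parameterize over $\mathbb{R}$ after reparameterizing a finite-length open arc by $\arctan$-type rescaling). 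Then set $f(t,x)=\gamma_{L(x)}(\gamma_{L(x)}^{-1}(x)+t)$, which manifestly satisfies the flow axioms $f(0,x)=x$ and $f(s+t,x)=f(s,f(t,x))$ leaf-wise, and has no stationary point since leaves are $1$-dimensional.

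The main obstacle — and the real content — is continuity of $f$ as a function of $(t,x)$ jointly, including continuity in the transverse direction. In a foliated chart one must compare unit-speed parameterizations of nearby leaves. The subtlety flagged in the introduction ("parameterizing leaves by arc length indicates a definite metric sensitivity") is exactly that arc-lengths of nearby leaves can differ, so the time-$t$ map is not simply "translate the slice coordinate by $t$"; one needs the arc-length function $\ell(c,s)$ (length along leaf through transversal point $c$, from a continuously-chosen basepoint section, out to plaque-coordinate $s$) to be jointly continuous in $(c,s)$ and, for fixed $c$, a homeomorphism in $s$. Joint continuity of $\ell$ follows from continuity of the metric and uniform estimates over the compact closure of a sub-plaque; the basepoint section can be chosen continuously because the chart is a product. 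Given this, $f$ restricted to the chart is a composition of continuous maps (invert $\ell(c,\cdot)$, add $t$, apply $\ell(c,\cdot)$, read off the slice coordinate), hence continuous; and since the flow is defined leaf-wise and intrinsically, the local definitions agree on overlaps, so they glue to a global continuous flow. Here is where paracompactness/metrisability is used again: to reduce everything to a locally finite cover by such chart-plaques with compact closures and to run the gluing. I would expect the finite-length-leaf and closed-leaf cases (reparameterizing to get a genuine $\mathbb{R}$-action) to require a mild extra care but no new idea — a continuous, leaf-dependent time-rescaling built from the same arc-length data.
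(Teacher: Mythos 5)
There is a genuine gap, and it sits exactly where the theorem gets its strength: the statement is about \emph{$C^0$-manifolds} and purely topological foliations (regular families of curves in the sense of Ker\'ekj\'art\'o and Whitney). On a $C^0$-manifold there is no Riemannian metric, and with respect to an arbitrary compatible distance function a leaf is merely a topological arc, which need not be rectifiable --- every subarc can have infinite length (think of a Koch-type curve). So the unit-speed parameterization $\gamma_L$ on which your whole construction rests is simply not defined in the stated generality. What your argument does establish, essentially correctly, is the $C^1$ (Riemannian) case, where arc length exists, plaques of nearby leaves converge in $C^1$ over compact arcs, and the shadowing/chaining argument you sketch gives joint continuity; but that is the easy case (compare the remark in the proof of Theorem~\ref{folklore} that the $C^1$ situation is already handled by classical Whitney--Bebutov theory). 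The entire difficulty of the Ker\'ekj\'art\'o--Whitney theorem --- the reason Whitney's proof runs to some 24 pages --- is to manufacture a continuous substitute for arc length out of purely topological data (local cross-sections, first-return maps, and a careful interpolation of local parameters), and none of that machinery appears in your proposal. For what it is worth, the paper itself does not reprove the theorem: it points to Ker\'ekj\'art\'o for surfaces and to Whitney \cite{Whitney33} for the general case, with more recent treatments by Mather and Hector--Hirsch.

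Two smaller points. First, your treatment of finite-length open leaves by an ``$\arctan$-type rescaling'' is both unnecessary and dangerous: with a complete metric (available on any metrisable manifold), an open leaf whose end accumulates anywhere must cross a fixed plaque infinitely often and so has infinite length in each direction, so no rescaling is needed; whereas an ad hoc leaf-by-leaf rescaling would wreck precisely the transverse continuity you are trying to prove. Second, the closing appeal to ``gluing over a locally finite cover'' is a red herring: your flow is defined globally and intrinsically leaf by leaf, so there is nothing to glue --- the only issue is joint continuity, and that is exactly where the $C^0$ hypothesis bites.
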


\begin{proof} For the case of the plane, see
\Kerekjarto\cite[p.\,111, \S 7]{Kerekjarto_1925}: looking at details it seems fair to say that he
establishes the theorem in the $2$-dimensional case
(eventually with
some assistance
of Rad\'o~\cite{Rado_1925} to triangulate the surface).
%
%
The general case
is a 24 pages long
ascension \`a la Whitney \cite[Thm~27.A, p.\,269]{Whitney33}.
(For more recent treatments
compare eventually Mather~\cite{Mather82} (surfaces) and
Hector-Hirsch~\cite{HectorHirschB} (triangulations).)
\end{proof}

In passing,
we mention that, albeit limited to metric manifolds, this
result of Whitney plays a crucial r\^ole in our
previous classification of foliations on the long plane, for
the asymptotic rigidity
enables
a reduction to certain compact subregions (see
\cite[Section~7]{BGG} for the details).

\subsection{Non-metric disruption of Whitney's flows
(caused by Cantorian rigidity)}\label{Whitney's_flow}

%
%
Of course Whitney's theorem
(\ref{Kerek_Whitney's_flow:thm}) is trivially false in full
generality, because as soon as there is a long leaf (e.g., the
horizontal foliation of the long plane by long lines), the
foliation cannot be the
phase-portrait of a flow. 

Thus, the more
subtle question is whether Whitney's flows exist
when all leaves are {\it short} (i.e., metric). This fails
even when all leaves are compact (so circles) as shown by the
following example. (Later we shall assist to
another elementary disruption of Whitney's flows
on the Moore surface,
see Proposition~\ref{Whitney's_flow_disruption_on_Moore}.)

\begin{prop}\label{Whitney's_flow-failure-on-plane_pipe}
The
orientable foliation on ${\Bbb L}^2-\{0\}$
by concentric squares
lacks a compatible flow.
\end{prop}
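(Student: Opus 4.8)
The plan is to argue by contradiction, extracting from a would‑be compatible flow a ``rotational'' structure on the long end of $\mathbb{L}^2$ that Cantorian rigidity forbids. Suppose $f\colon\mathbb{R}\times(\mathbb{L}^2\setminus\{0\})\to\mathbb{L}^2\setminus\{0\}$ is a flow whose orbits are the concentric squares. Each square being a circle, $f$ is a brush all of whose orbits are periodic; write $\tau(r)>0$ for the minimal period of the square at ``radius'' $r$, where $r=\|\cdot\|_\infty$ is the continuous function whose level sets are exactly the leaves, so $r$ ranges over a long ray. The first step is that $\tau$ is continuous: choosing a Euclidean chart $U$, its orbit $f(\mathbb{R}\times U)$ is Lindel\"of, hence metrisable, so the Whitney--Bebutov theory invoked in the proof of Theorem~\ref{folklore} gives a local cross‑section and flow‑box through each point, and the usual semicontinuity estimates for the first‑return time then yield continuity of $\tau$.

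The second step is \emph{Cantorian rigidity}: a continuous real‑valued function on a long ray is eventually constant. Hence $\tau\equiv c$ for $r\ge r_{*}$ (some $r_{*}$, some $c>0$). The same applies to the finitely many ``partial'' time functions --- the flight times from the midpoint $(r,0)$ of a side of the square of radius $r$ to its successive corners $(r,\pm r),(-r,\pm r)$ and around --- so on the long end $W=\{r\ge r_{*}\}$ the flow acquires a profile independent of $r$. In particular $f_{c}=\mathrm{id}_{W}$, so $f|_{W}$ is a continuous \emph{free} circle action whose orbits are the squares; and the positive $x$-axis $\{(r,0):r\ge r_{*}\}$ is a global cross‑section meeting each orbit once. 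Applying the $S^1$-action to this section gives a continuous bijection $S^1\times\mathbb{L}_{\ge0}\to W$ which is a local homeomorphism (flow‑box picture), hence a homeomorphism carrying each square to a slice; composing with the angular projection produces a continuous map $\Theta\colon W\to S^1$ of degree one on every square.

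The contradiction is that no such ``angular coordinate'' on the long end exists --- this is where the Cantorian rigidity of $\mathbb{L}$ really bites. Concretely, the partial‑time homeomorphism $y\mapsto(\text{flight time from }(r,0)\text{ to }(r,y)\text{ along the square})$ is, for large $r$, a strictly increasing homeomorphism of $[0,r]$ onto one and the same interval $[0,c_{\mathrm{corner}}]$; solving it for a prescribed time $s\in(0,c_{\mathrm{corner}})$ gives a point $y_{s}(r)\in(0,r)$ depending continuously --- hence eventually constantly --- on $r$. Letting $s$ run over $(0,c_{\mathrm{corner}})$ and reassembling, one manufactures a continuous strictly monotone surjection from a metrisable half‑open interval onto an \emph{unbounded} initial segment of the long ray, which is absurd since the continuous image of $[0,1)$ in a long ray is a countable union of compacta and so bounded.

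The main obstacle is precisely this last step: organising the passage from ``the period, and the whole flow profile, is eventually constant'' to an honest contradiction without the bookkeeping of the (uncountably many) thresholds of eventual constancy itself escaping to the top of the long ray. This is what one must be careful about; it is handled either by extracting, via $\omega$-boundedness of the long ray, a convergent subnet of the selected points together with a pressing‑down argument, or --- more cleanly --- by isolating at the outset the purely topological assertion that the ``plane pipe'' $\mathbb{L}^2\setminus\{0\}$ carries no continuous $S^1$-valued function restricting to a degree‑one map on each concentric square, in which form it becomes a clean (if delicate) exercise in the rigidity of the long line.
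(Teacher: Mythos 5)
Your argument coincides with the paper's own proof up to the decisive point: the return-time function is continuous and real-valued, hence eventually constant by Cantorian rigidity; the flow on the tail $W=M_{\ge\beta}$ therefore descends to a free circle action admitting the positive axis as a slice; and one obtains a continuous bijection ${\Bbb S}^1\times{\Bbb L}_{\ge 0}\to W$ which is in fact a homeomorphism (the paper secures this by restricting to the compact annuli of the canonical exhaustion, or by Lemma~\ref{sequentially-compact}, rather than by your ``local homeomorphism'' claim, which itself needs a word at points of the slice). Where you diverge is in extracting the final contradiction, and that is where there is a genuine gap. You assert that, for fixed $s$, the point $y_s(r)$ defined by $f(s,(r,0))=(r,y_s(r))$ depends ``continuously --- hence eventually constantly'' on $r$. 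But $r\mapsto y_s(r)$ takes values in the long ray, not in ${\Bbb R}$, and the eventual-constancy principle fails for such targets: the identity ${\Bbb L}_+\to{\Bbb L}_+$ is continuous and nowhere eventually constant. Eventual constancy would follow if you first showed the range of $y_s(\cdot)$ to be bounded (hence contained in a compact metrisable interval), but that boundedness is essentially the substance of what must be proved, since for each fixed $r$ the values $y_s(r)$ sweep out all of $[0,r]$ as $s$ runs over $[0,c_{\rm corner}]$. Your closing paragraph concedes that the ``reassembly'' into a monotone surjection of $(0,c_{\rm corner})$ onto an unbounded initial segment is the main obstacle; as written it is not carried out, and the fallback assertion (the planar pipe carries no degree-one circle-valued function) is precisely the nontrivial rigidity statement requiring proof, not a clean exercise.

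The gap is repairable, but not by the route you indicate. One can apply Fodor's pressing-down lemma to the (essentially regressive) assignment $\alpha\mapsto y_s(\alpha)$ to trap $y_s$ in a compact metrisable interval on a stationary, hence on a closed unbounded, set; eventual constancy on that club then does follow, giving a constant value $\eta_s$ for each $s$. Intersecting these clubs over a countable dense set $D$ of times and using that $\{y_s(r):s\in D\}$ is dense in $[0,r]$, one finds $\sup_{s\in D}\eta_s=r$ for every $r$ in the common club --- absurd, as the left side is a single countable supremum. The paper avoids all of this bookkeeping by reducing at once to a previously established topological fact: the continuous bijection identifies the planar pipe with the cylindrical pipe ${\Bbb S}^1\times{\Bbb L}_{\ge 0}$, and Lemma~\ref{planar_vs_cylindrical} (proved via the foliation classification of \cite{BGG}, or by Nyikos's embedded-long-ray argument) says these have distinct topological types. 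Either invoke such a lemma, or supply the pressing-down argument in full; the step as you have written it does not close.
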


\begin{proof}
By contradiction, let $f\colon {\Bbb R}\times M \to M$ be such
a flow, where $M={\Bbb L}^2-\{0\}$. For any $\alpha\in {\Bbb
L}_+$, let $S_{\alpha}$ be the square of radius $\alpha$ for
the ``long norm'', i.e.  $S_{\alpha}=\| \cdot
\|^{-1}(\alpha)$, where $\| \cdot \|\colon M \to {\Bbb L}_+$
is defined by $\| (x,y)\|=\max\{\vert x\vert,\vert y\vert\}$.
(The symbol $\vert \cdot\vert\colon {\Bbb L} \to {\Bbb L}_{\ge
0}$ is the obvious ``long absolute value''.) We define a map $\tau \colon
{\Bbb L}_+ \to {\Bbb R}$ taking each $\alpha \in {\Bbb L}_+$
to the time $\tau (\alpha)$ elapsed until the point $(\alpha,
0)\in {\Bbb L}^2-\{ 0\}$ returns to its initial position under
the flow $f$ (continuity is
easy to check).
%
%
%
%
Hence $\tau$ must be {\it eventually constant}, say constant
after some bound $\beta\in {\Bbb L}_+$ (see
e.g., \cite[Lemma~4.3]{BGG}). The ultimate
stagnation of the period implies that the flow ultimately
converts into an action of the circle ${\Bbb S}^1={\Bbb R}/
{\Bbb Z}$ (assuming for simplicity $\tau (\beta)=1$). More
precisely, if $M_{\ge \alpha}=\| \cdot \|^{-1} ([\alpha,
\omega_1))$ denotes the part of $M$ lying outside the square
$S_{\alpha}$, then the restricted action of ${\Bbb R}$ on
$M_{\ge \beta}$ descends to an action of ${\Bbb S^1}$ on
$M_{\ge \beta}$.

This action admits a ``slice'', $\Sigma=[\beta,\omega_1)\times
\{0\}$, 
i.e., the restricted action $\psi\colon {\Bbb S}^1 \times
\Sigma \to M_{\ge \beta}$ is a continuous bijection. 
For each $\gamma \in \omega_1$, $\psi$ restricts to a
homeomorphism $\psi_{\gamma}\colon {\Bbb S}^1 \times ([\beta,
\gamma]\times \{0 \}) \to \| \cdot \|^{-1} ([\beta, \gamma])$
between each sublevel of the canonical $\omega_1$-exhaustion
by compact annuli, hence the inverse map $\psi^{-1}$ is
continuous as well. (Alternatively one can deduce that $\psi$
is a homeomorphism from Lemma~\ref{sequentially-compact}.)
This is impossible, for
$\psi $ relates two
long pipes
belonging to distinct topological types
(cf. Lemma~\ref{planar_vs_cylindrical}).
\end{proof}

\begin{lemma}\label{planar_vs_cylindrical}
The cylindric pipe ${\Bbb S}^1 \times {\Bbb L}_{\ge 0}$ is not
homeomorphic to the planar pipe ${\Bbb L}^2-(-1,1)^2=:\Pi$.
\end{lemma}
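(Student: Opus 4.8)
The plan is to argue by contradiction, the first task being to recognise that nothing homotopy-theoretic can do the work: both ${\Bbb S}^1\times{\Bbb L}_{\ge 0}$ and $\Pi$ are orientable $\omega$-bounded surfaces with a single boundary circle, both retract onto that circle, both are homotopy equivalent to ${\Bbb S}^1$ (same $\pi_1={\Bbb Z}$, same homology), and both carry a canonical exhaustion by compact bordered cylinders with essential outer boundary---the annuli ${\Bbb S}^1\times[0,\alpha]$ on one side, the annuli $\{1\le\Vert\cdot\Vert\le\alpha\}$ on the other. The discrepancy must therefore be read off at the end, where the Cantorian rigidity of ${\Bbb L}_{\ge 0}$ comes into play; the efficient way to encode it is as circle bundles over the closed long ray, ${\Bbb S}^1\times{\Bbb L}_{\ge 0}$ being the trivial one and $\Vert\cdot\Vert\colon\Pi\to[1,\omega_1)$ the one we must show is essentially different.

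Assume then a homeomorphism $h\colon{\Bbb S}^1\times{\Bbb L}_{\ge 0}\to\Pi$. Since $h$ is proper, the circles $h({\Bbb S}^1\times\{\alpha\})$ leave every compact subset of $\Pi$ as $\alpha\to\omega_1$, so $m(\alpha):=\min_\theta\Vert h(\theta,\alpha)\Vert$ and $M(\alpha):=\max_\theta\Vert h(\theta,\alpha)\Vert$ are continuous cofinal maps ${\Bbb L}_{\ge 0}\to[1,\omega_1)$. By the eventual-behaviour lemma for the long ray (of the type of \cite[Lemma~4.3]{BGG}, here in the ``derived club'' form), $m$ and $M$ each agree with the identity on a club of $\omega_1$; intersecting, there is a club $C$ with $m(\alpha)=\alpha=M(\alpha)$ for $\alpha\in C$, so that $h({\Bbb S}^1\times\{\alpha\})\subseteq S_\alpha$, hence equals $S_\alpha$, for every $\alpha\in C$. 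Thus, after deleting an initial segment, $h$ restricts to a homeomorphism ${\Bbb S}^1\times[\alpha_0,\omega_1)\to\{\Vert\cdot\Vert\ge\alpha_0\}$ carrying the level circle onto the square $S_\alpha$ for all $\alpha$ in the club $C$; in particular it trivialises, along $C$, the circle bundle $\Vert\cdot\Vert$, and restricting $h$ to the pre-image of the ``East wedge'' $\{(x,y):x\ge\alpha_0,\ \vert y\vert\le x\}$ of $\Pi$ it would exhibit that wedge as a trivial $[0,1]$-bundle over the tail of the long ray.

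The main obstacle---and the genuinely non-formal step---is to show that this cannot happen, i.e.\ that the canonical square-wedges of ${\Bbb L}^2\setminus(-1,1)^2$ are not trivial $[0,1]$-bundles over ${\Bbb L}_{\ge 0}$, so that $\Pi$ is not a trivial circle bundle, even abstractly. The mechanism is the non-paracompactness of ${\Bbb L}_{\ge 0}$: a trivialisation would provide, for each $\alpha\in C$, a homeomorphism of the genuinely growing side $\{\alpha\}\times[-\alpha,\alpha]$ of $S_\alpha$ onto one \emph{fixed} arc of ${\Bbb S}^1$, depending continuously on $\alpha$; tracking these across a limit point $\lambda\in C$ one finds that the sub-arcs carrying the fixed compact intervals $\{\alpha\}\times[-\beta,\beta]$ ($\beta<\lambda$) would have to stabilise while simultaneously sweeping out the whole arc, which is impossible. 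This is exactly the flavour of argument behind Nyikos's census of long pipes, so a legitimate shortcut at this point is to invoke \cite{Nyikos84} for the statement that the trivial pipe ${\Bbb S}^1\times{\Bbb L}_{\ge 0}$ and the ``planar'' pipe ${\Bbb L}^2\setminus(-1,1)^2$ are of distinct topological type. Granting this, the near-trivialisation of the previous paragraph is the desired contradiction; an equivalent packaging is to double both surfaces and instead contradict the non-homeomorphism of ${\Bbb S}^1\times{\Bbb L}$ and the connected sum ${\Bbb L}^2\#{\Bbb L}^2$.
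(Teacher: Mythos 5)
Your reduction via the long norm is sound: a back-and-forth club argument does force a putative homeomorphism $h\colon{\Bbb S}^1\times{\Bbb L}_{\ge 0}\to\Pi$ to satisfy $h({\Bbb S}^1\times\{\alpha\})=S_\alpha$ for all $\alpha$ in a club $C$, and this is in the spirit of arguments used elsewhere in the paper. The proof breaks down at the decisive step, in three places. (a) Knowing that $h$ matches levels along $C$ does \emph{not} exhibit the East wedge as a trivial $[0,1]$-bundle over a tail of the long ray: the arcs $h^{-1}(\{\alpha\}\times[-\alpha,\alpha])\subset{\Bbb S}^1\times\{\alpha\}$ may rotate as $\alpha$ varies, and nothing at all is asserted about levels off the club, so no product structure is obtained. (b) Even granting such a trivialisation, the announced contradiction is only a gesture: for fixed $\beta$ the arcs $h^{-1}(\{\alpha\}\times[-\beta,\beta])$ converge, as $\alpha\to\lambda$, to $h^{-1}(\{\lambda\}\times[-\beta,\beta])$ by plain continuity of $h^{-1}$, so nothing is forced to ``stabilise while sweeping out the whole arc''; no impossibility has been derived. (c) The proposed shortcut---invoking \cite{Nyikos84} for the statement that the two pipes are of distinct topological type---is circular, since that statement is verbatim the lemma; and doubling merely displaces the problem to distinguishing ${\Bbb S}^1\times{\Bbb L}$ from ${\Bbb L}^2\#{\Bbb L}^2$, which is no easier.

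The missing ingredient is a usable form of Cantorian rigidity in the fibre direction. One way to finish along your lines: for each fixed $y_0$ the map $\alpha\mapsto\pi_{{\Bbb S}^1}\bigl(h^{-1}(\alpha,y_0)\bigr)$ is a continuous map from a tail of the long ray to a metric space, hence eventually constant with value $\tau(y_0)$; a leapfrog over $\beta$ and $\alpha$ shows that $\tau$ is then a continuous \emph{injection} of the long line into ${\Bbb S}^1$ (injectivity coming from the fact that $y\mapsto h^{-1}(\alpha,y)$ is injective on each East side for $\alpha\in C$), which is absurd since any continuous map ${\Bbb L}\to{\Bbb S}^1$ is eventually constant in both directions. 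The paper avoids the bundle picture altogether and argues by exhibiting a structure possessed by one pipe and not the other: ${\Bbb S}^1\times{\Bbb L}_{\ge 0}$ obviously carries a foliation by long rays transverse to its boundary, whereas $\Pi$ does not, the latter being extracted from the asymptotic rigidity and Euler-obstruction classification of foliations in \cite[Section~7]{BGG}. Your route, once repaired as above, is more self-contained (it needs only the eventual constancy of real- or circle-valued maps on the long ray rather than the foliation classification), but as written it does not yet contain the contradiction.
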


\begin{proof} Specialists are
certainly able
to distinguish them by playing with
embedded long
rays (for a brief sketch see Nyikos~\cite[p.\,670]{Nyikos84}).
We find it however psychologically more relaxing, to
argue in terms of foliated structures; by noticing that the
planar pipe $\Pi$ lacks a foliation by long rays transverse to
the boundary. The proof (of this last statement) uses the
methods in \cite[Section~7]{BGG}; we briefly recall the idea.
By rigidity each ``rhombic'' quadrant (i.e., the results from
cuts practiced along the four diagonals of $\Pi$) is either
asymptotically foliated by long straight rays or by short
segments (cf. Figure~13 in \cite{BGG}),
giving a short list of $6$ combinatorially distinct patterns.
By suitable cuts,
we may extract 6 different compact
subregions (cf. Figure~14 in \cite{BGG}).
A plumbing argument (cf. again Figure~14) shows
 that all those $6$ patterns (except
the first) are actually impossible in view of the Euler
obstruction). The
first case left over is just a square whose boundary is a
circle leaf,
 impeding the existence of a foliation of the
specified type on $\Pi$.
\end{proof}

\begin{lemma}\label{sequentially-compact} Let $f\colon X \to Y$ be a continuous bijection. Assume
that $X$ is sequentially-compact and that $Y$ is first
countable and Hausdorff. Then $f$ is a homeomorphism.
\end{lemma}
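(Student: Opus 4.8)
The plan is to show directly that $f^{-1}$ is continuous, which (together with the hypothesis that $f$ is a continuous bijection) gives the homeomorphism claim. Since $Y$ is first countable, continuity of $f^{-1}$ at a point $y_0 = f(x_0)$ can be tested sequentially: I must show that whenever $y_n \to y_0$ in $Y$, the preimages $x_n := f^{-1}(y_n)$ converge to $x_0$ in $X$. Suppose not; then there is an open neighbourhood $U$ of $x_0$ and a subsequence $(x_{n_k})$ lying entirely in $X \setminus U$. By sequential compactness of $X$, this subsequence has a further subsequence $(x_{n_{k_j}})$ converging to some point $x_\infty \in X$; moreover $x_\infty \notin U$, in particular $x_\infty \ne x_0$.

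Now apply continuity of $f$: from $x_{n_{k_j}} \to x_\infty$ we get $f(x_{n_{k_j}}) = y_{n_{k_j}} \to f(x_\infty)$. But $(y_{n_{k_j}})$ is a subsequence of $(y_n)$, which converges to $y_0$, so $y_{n_{k_j}} \to y_0$ as well. At this point I invoke the Hausdorff hypothesis on $Y$: limits of sequences are unique, so $f(x_\infty) = y_0 = f(x_0)$. Since $f$ is injective, $x_\infty = x_0$, contradicting $x_\infty \ne x_0$. Hence no such bad neighbourhood $U$ exists, $x_n \to x_0$, and $f^{-1}$ is continuous.

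The argument is essentially a soft point-set manipulation, so there is no real "hard part"; the only place to be careful is that first countability of $Y$ is genuinely needed (to reduce continuity of $f^{-1}$ to a sequential statement) and Hausdorffness of $Y$ is genuinely needed (for uniqueness of sequential limits, which is what forces $x_\infty = x_0$). Sequential compactness of $X$ is what manufactures the convergent subsequence $(x_{n_{k_j}})$ in the first place; note that we do not need $X$ to be first countable or Hausdorff, nor do we need full compactness of $X$, so the lemma is stated in its natural generality for the intended application to long pipes (where the relevant domain $\mathbb{S}^1 \times ([\beta,\gamma]\times\{0\})$-type spaces and their $\omega$-bounded cousins are sequentially compact).
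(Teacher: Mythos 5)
Your proof is correct and uses essentially the same mechanism as the paper's: lift a convergent sequence from $Y$, extract a convergent subsequence in $X$ by sequential compactness, push forward by continuity of $f$, and identify the limit via Hausdorffness of $Y$ and injectivity of $f$. The only difference is packaging --- you verify sequential continuity of $f^{-1}$ pointwise by contradiction, while the paper shows $f$ is a closed map by checking that $f(F)$ is sequentially closed for each closed $F\subset X$; both reductions are licensed by first countability of $Y$, so no further comment is needed.
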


\begin{proof} We check the continuity
of the inverse map $f^{-1}$ by showing that $f$ is closed. Let
$F$ be closed in $X$. By first countability of $Y$ it is
enough to check that $f(F)$ is sequentially closed. Let $y_n$
be a sequence in $f(F)$ converging to $y$. Let $x_n$ be the
unique lift of $y_n$ in $F$. By sequential-compactness there
is a converging subsequence $x_{n_k}$ converging to $x\in F$,
say. By continuity, it follows that $(y_n)$ converges to
$f(x)$.
By uniqueness of the limit in Hausdorff spaces, we have
$y=f(x)$, hence $y\in f(F)$, as desired.
\end{proof}

\section{Flows via small propagation}\label{Flows:Section}

By a {\it flow}, one understands a continuous group action
$f\colon {\Bbb R}\times X \to X$, of the real line on a
certain topological space. For the sake of geometric
intuition, we shall primarily deal with the case where $X$ is
a (topological) manifold,
{\it a priori} without imposing differentiability, nor
metrisability. 
%
The driving
idea in this section
is to look how the simplest available point-sets in a
manifold, namely {\it charts},
get sidetracked by the flow motion. Of special interest is the
situation, where a chart (more generally a Lindel\"of subset)
has an orbit
spreading all around the manifold (filling it completely) for
in this case the {\it whole} manifold
turns out to be
Lindel\"of, hence metric. This motivates the following jargon:

\begin{defn} {\rm A {\it propagator} for a
flow $f\colon {\Bbb R} \times X\to X$ is a subset $\Sigma$ of
the phase-space, $X$, such that the restricted map of the flow
$f\colon {\Bbb R} \times \Sigma \twoheadrightarrow X$ is
surjective. The propagator is said to be {\it small} if it is
Lindel\"of. In that case the phase-space $X$
is Lindel\"of (for
${\Bbb R}\times \Sigma$ is Lindel\"of, as ${\Bbb R}$ is
$\sigma$-compact).}
\end{defn}

\subsection{Extinction of  minimal flows on non-metric
manifolds}\label{minimal-inexistence}

The following
is a very baby non-metric version of the Gottschalk conjecture
\cite{Gottschalk_1958}, inquiring which spaces, especially
manifolds,
carries a minimal flow:

\begin{prop} \label{minimal-inexistence-prop}
A manifold carrying a minimal flow is metric.
\end{prop}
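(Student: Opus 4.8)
The plan is to show that a minimal flow forces the phase-space to be Lindelöf, whence metrisable by the Urysohn metrisation theorem (every Lindelöf manifold is metrisable). The key observation is that minimality makes \emph{every} orbit dense, and in particular the orbit of any single point is dense; combined with the ``chart-orbit trick'' invoked repeatedly in the excerpt, this will pin down smallness of a propagator.

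First I would pick any point $x\in M$ and any Euclidean chart $U\ni x$. The orbit $f(\mathbb R\times\{x\})$ is dense in $M$ by minimality. But the orbit of the single point is contained in the orbit of the chart, $f(\mathbb R\times\{x\})\subseteq f(\mathbb R\times U)$, and more to the point I claim $f(\mathbb R\times U)=M$: indeed, since the orbit of $x$ is dense, every point $y\in M$ has the orbit of $x$ accumulating at it, so some $f(t,x)$ lands in any prescribed neighbourhood of $y$; applying the homeomorphism $f_{-t}$ we get that $y$ lies in the (open) orbit-tube of that neighbourhood. More cleanly: the set $f(\mathbb R\times U)$ is open (as $U$ is open and each $f_t$ is a homeomorphism, it is a union of open sets $f_t(U)$) and it is flow-invariant; its complement is also open and invariant; if the complement were non-empty it would contain a full orbit, which, being dense, would meet $U$ — contradiction. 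Hence $f(\mathbb R\times U)=M$, i.e. the single chart $U$ is a propagator.

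Now $U$, being a chart, is homeomorphic to $\mathbb R^n$, hence second countable, hence Lindelöf; so $U$ is a \emph{small} propagator in the sense of the definition just given. By that same definition the phase-space $M$ is then Lindelöf, because $\mathbb R\times U$ is Lindelöf ($\mathbb R$ is $\sigma$-compact, $U$ is Lindelöf) and $f$ maps it onto $M$ continuously. A Lindelöf manifold is regular and second countable, so by Urysohn it is metrisable; therefore $M$ is metric, as claimed.

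The only real point needing care is the elementary topological fact that a non-empty open invariant set whose complement is also open-and-invariant cannot coexist with a dense orbit — but this is immediate once one notes that the complement of an open invariant set is closed and invariant, hence (if non-empty) contains the closure of some orbit, i.e.\ is all of $M$ when that orbit is dense. I do not expect any genuine obstacle here; the whole content is that minimality upgrades ``the chart-orbit is Lindelöf'' (true for \emph{any} flow) to ``the chart-orbit is everything'', which is exactly the small-propagator hypothesis. One should perhaps remark that the same argument shows, a fortiori, that a manifold carrying even a single dense orbit (a transitive flow) whose closure is open — e.g.\ automatically when that orbit's chart-tube exhausts $M$ — is already constrained, but for minimality the conclusion is unconditional.
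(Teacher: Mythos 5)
Your argument is correct and is exactly the paper's proof, merely written out: minimality makes any non-empty open set (in particular a chart) a propagator, and a Lindel\"of propagator forces the phase-space to be Lindel\"of, hence metrisable. The only blemish is the passing claim that the complement of the invariant open set $f({\Bbb R}\times U)$ is ``open and invariant''---it is closed and invariant, as your own final paragraph correctly states, and that is all the argument needs.
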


\begin{proof}
In a minimal flow, each non-empty open set is a propagator.
\end{proof}

\begin{rem}\label{cytoplasmic_expansions}
(Minimal foliations via Nyikos' cytoplasmic expansions) {\rm
In contrast, it
is possible to construct  minimal foliations on non-metric
surfaces,
via a Kronecker (irrational slope) foliation on the $2$-torus.
Recall a remarkable construction of Nyikos \cite{Nyikos90},
attaching a long ray to
certain
surfaces. The plane ${\Bbb R}^2$, for instance, can
be stretched in one or more directions to give ``amoebas''
with long {\it cytoplasmic expansion(s)} of the $2$-cell (by
so-called
{\it pseudopodia}). When applied to a punctured Kronecker
torus, such a long cytoplasmic expansion---effected near the
puncture
in a way parallel to the foliation---produces a minimally
foliated non-metric surface
with one long-ray leaf. Besides, by a clever Pr\"uferisation
along a suitable Cantor set
in the torus, M. Baillif
also proposes a minimal foliation having only metric leaves
(details may appear in a subsequent paper Baillif~{\it et
al.}~\cite{BGG2}).}
\end{rem}

\subsection{Disruption of Whitney's flows detected by small
propagation}

The idea of
propagation
is now used to identify more examples where Whitney's flows
run into troubles, e.g. on the {\it Moore surface}.
%
%
Recall, the latter
to be deduced from the bordered Pr\"ufer surface, $P$, by
self-identifying the boundary components via the folding
$x\sim -x$.
(For the original description cf.
\cite{Moore_1942} and the 1962 edition of the {\it
Foundations} \cite[p.\,376--377]{Moore_1962}.)
The horizontal foliation on $P$ when pushed down to $M$
develops many {\it thorns} singularities (cf.
Figure~\ref{Artist_views}). Thus, we  rather consider the
vertical ``foliation'' on $P$
$(x={\rm const})$, which in fact is {\it not}  a genuine
foliation,  for it has {\it saddle}
singularities (locally equivalent to the level curves of the
function $xy$, restricted to the upper-half plane).
Yet, the quotient mapping
$P \to M$ resolves these
singularities to induce a (genuine) foliation on the Moore
surface $M$, which we refer to as the {\it vertical foliation}
of $M$.

\begin{prop}\label{Whitney's_flow_disruption_on_Moore}
The vertical foliation on the Moore surface, $M$,
lacks a compatible Whitney flow.
\end{prop}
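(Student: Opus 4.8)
The plan is to run the ``small propagator'' argument of this section: a compatible Whitney flow would make the open half-plane sitting inside $M$ a Lindel\"of propagator, forcing $M$ itself to be Lindel\"of and hence metrisable, contrary to the non-metrisability of the Moore surface. So suppose, for contradiction, that $f\colon{\Bbb R}\times M\to M$ is a non-singular flow whose orbits are exactly the leaves of the vertical foliation $\mathcal F$, and let $H\subset M$ be the \emph{core}, i.e.\ the image of the open upper half-plane of Pr\"ufer; it is an open dense $2$-cell, homeomorphic to ${\Bbb R}^2$, in particular second countable and hence Lindel\"of.

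The key point I would establish first is that every leaf of $\mathcal F$ meets $H$. On $H$ the foliation restricts to the vertical foliation $x=\mathrm{const}$, whose leaves are the vertical rays $\{c\}\times{\Bbb R}_{>0}$ and plainly meet $H$; the remaining points of $M$ lie on the ``thorns'' $M\setminus H$, so it is enough to rule out a leaf contained in $M\setminus H$. For this one invokes the local picture recalled just before the statement: the vertical ``foliation'' of $P$ has, at each ideal boundary point, a saddle singularity modelled on the level curves of $xy$ on the closed upper half-plane, and the folding $x\sim -x$ that resolves this saddle turns the boundary line into a thorn which is \emph{transverse} to the resulting (genuine) leaves, each of which runs into the open half-plane. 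Consequently no leaf of $\mathcal F$ can be trapped inside the thorns.

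Granting this, the conclusion is immediate. The orbit of $f$ through any point $p\in M$ is the leaf $\ell_p$ of $\mathcal F$ through $p$, and $\ell_p$ contains some point $\sigma\in H$, so $p=f(t,\sigma)$ for a suitable $t\in{\Bbb R}$; hence $f\colon{\Bbb R}\times H\to M$ is surjective, i.e.\ $H$ is a small (Lindel\"of) propagator for $f$. Therefore $M=f({\Bbb R}\times H)$ is Lindel\"of, as ${\Bbb R}\times H$ is Lindel\"of (${\Bbb R}$ being $\sigma$-compact); being a Lindel\"of manifold, $M$ would then be metrisable (Urysohn), contradicting the non-metrisability of the Moore surface.

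I expect the only genuinely delicate step to be the transversality claim in the second paragraph---that the thorns meet $\mathcal F$ transversally, so that no leaf lies inside $M\setminus H$---which should follow from the explicit saddle-resolution model sketched above. Alternatively, since a Whitney flow is in particular a brush, the proposition is also subsumed by the general fact that the Moore surface admits no brush at all: its core--thorn (cytoplasmic) decomposition, discussed in Section~\ref{CTD:Section}, already forces the core $H$ to be a propagator under \emph{any} non-singular flow, with the same Lindel\"of contradiction.
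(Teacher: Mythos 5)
Your overall strategy is the paper's: exhibit a small (Lindel\"of) propagator for any compatible flow and conclude that $M$ would be Lindel\"of, hence metrisable. The paper simply takes the horizontal line $y=1$ as the propagator (every leaf of the vertical foliation meets it), whereas you take the whole core $H$; either choice works, and your fallback via the cytoplasmic (core--thorn) decomposition is exactly the argument the paper runs in Section~\ref{CTD:Section}.

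That said, the justification you offer for the key step---that every leaf meets $H$---rests on a false sub-claim. The thorns are \emph{not} transverse to the resolved vertical foliation; each thorn is \emph{contained in} a leaf. Concretely, in the Pr\"ufer chart $(s,y)\mapsto (c+sy,\,y)$ near the boundary component $R_c$, the vertical foliation is the family of level sets of $sy$, and the folding $s\sim -s$ is realised by $z\mapsto z^2$ on $\{ {\rm Im}\, z\ge 0\}$, under which $sy=\frac12 {\rm Im}(z^2)$; the resolved foliation is therefore the horizontal foliation of the $w=z^2$ plane, and the thorn (the image of $\{y=0\}$) is the ray $[0,\infty)$ lying \emph{inside} the leaf $\{{\rm Im}\, w=0\}$, not crossing the leaves. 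Your conclusion nevertheless survives: that same leaf $\{{\rm Im}\, w=0\}$ is precisely the union of the thorn with the vertical line $x=c$ of the open half-plane, so it does meet $H$---indeed it meets the line $y=1$ in the single point $(c,1)$, which is why the paper's slimmer propagator already suffices. With the transversality claim replaced by this correct description of the leaf through a thorn, your proof is complete.
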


\begin{proof}  Notice that the
horizontal line $y=1$ is a small propagator for any compatible
flow, violating the non-metrisability of the Moore surface.
\end{proof}

This exemplifies probably one the simplest manifestation of
the radical
divorce affecting (outside the metric sphere)
the fusional cohesion  of ``flows'' with ``foliations'',
under Whitney's marriages.

\subsection{Pr\"ufer's flows: windscreen wiper motions}

Small propagators are not always available, e.g., in the case
of the horizontal foliations on the Pr\"ufer surfaces (either
$2P$ or $P_{\rm collar}$).
The natural candidates would be
countable unions of vertical lines, but then some of the
uncountably many {\it bridges} (cf. Figure~\ref{Artist_views},
for an intuitive meaning) remain unexplored .
Thus, the ``flowability'' of these foliations is not
obstructed by
small propagation,
and it turns out that those horizontal foliations on the
varied Pr\"ufer
surfaces admit indeed a Whitney flow. Heuristically, such a
flow (on $P$) can be visualised as the motion of a
``windscreen wiper'', where the different rays (involved in
Pr\"ufer) are undergoing a collective sweeping motion
(providing thereby a first interesting motion (brush) on a
non-metric manifold, not merely
induced by a metric factor). Here is a formal treatment:

\begin{prop} \label{Pruefer_flow} The horizontal
foliation on the bordered Pr\"ufer surface $P$ admits a
compatible Whitney flow.
\end{prop}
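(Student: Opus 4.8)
The plan is to exhibit the flow by an explicit formula in the standard coordinates on $P$ and to verify by hand that it is fixed-point-free with orbit partition equal to the horizontal foliation; essentially all the work sits in checking continuity at the ideal boundary points.

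First I would fix notation. Write $P=H\cup\bigsqcup_{a\in{\Bbb R}}\partial_a$, where $H=\{(x,y):y>0\}$ is the open upper half-plane with its usual topology and $\partial_a\cong{\Bbb R}$ is the boundary component of ideal points rooted at $(a,0)$, the point $(a,s)$ recording the ray into $H$ of direction $s=\lim(x-a)/y$; a neighbourhood basis of $(a,s)$ is given by the wedges $V_{\varepsilon}(a,s)=\{(a,s'):|s'-s|<\varepsilon\}\cup\{(x,y)\in H:0<y<\varepsilon,\ |(x-a)/y-s|<\varepsilon\}$. In these coordinates the leaves of the horizontal foliation are the lines $\ell_c=\{y=c\}\subset H$ ($c>0$) together with the boundary components $\partial_a$, and a sequence $(x_n,y_n)$ in $H$ converges to $(a,s)$ precisely when $y_n\to 0$ and $(x_n-a)/y_n\to s$. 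Now define $f\colon{\Bbb R}\times P\to P$ by the shear $f_t(x,y)=(x+ty,y)$ on $H$ and by $f_t(a,s)=(a,s+t)$ on $\partial P$. (On $H$ one could instead quote Whitney's Theorem~\ref{Kerek_Whitney's_flow:thm}, $H$ being metric, to get \emph{some} flow with orbits the $\ell_c$; the whole point of the proposition is that a suitable such flow, namely the shear, extends across $\partial P$.)

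On $H$ this $f$ is visibly a continuous ${\Bbb R}$-action with no stationary point and with the orbit of $(x_0,y_0)$ equal to $\ell_{y_0}$; on each $\partial_a$ it is the translation flow, again fixed-point-free, with $\partial_a$ as its sole orbit. Hence the orbit partition of $f$ is exactly the leaf partition of the horizontal foliation, $f$ has no stationary point, and the one remaining task is global continuity. I note that in a foliated chart $\phi(x,y)=\bigl((x-a)/y-s,\,y\bigr)$, $\phi(a,s')=(s'-s,0)$, one has $\phi\circ f_t\circ\phi^{-1}(u,v)=(u+t,v)$, which shows at once that $f$ runs along the plaques and also explains why the shear, and not the naive translation $(x,y)\mapsto(x+t,y)$ (whose orbits would cut across every boundary component rather than staying inside one), is the right choice.

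For continuity at an ideal point, take $(t_n,p_n)\to(t_0,(a,s_0))$; passing to a subsequence, either all $p_n=(x_n,y_n)\in H$, in which case $y_n\to 0$ and $(x_n-a)/y_n\to s_0$, so $f_{t_n}(p_n)=(x_n+t_ny_n,y_n)$ has $y_n\to 0$ and $(x_n+t_ny_n-a)/y_n=(x_n-a)/y_n+t_n\to s_0+t_0$, whence $f_{t_n}(p_n)\to(a,s_0+t_0)=f_{t_0}(a,s_0)$; or all $p_n\in\partial P$, in which case they eventually lie in $\partial_a$ with $s_n\to s_0$, so $f_{t_n}(a,s_n)=(a,s_n+t_n)\to(a,s_0+t_0)$. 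Together with the obvious $f_0=\mathrm{id}$ and $f_t\circ f_{t'}=f_{t+t'}$ on all of $P$, this makes $f$ a brush compatible with the horizontal foliation, as required. I do not anticipate a genuine obstacle here: the only thing needing care is to pin down the Pr\"ufer topology tightly enough to justify the convergence criterion near $\partial P$, after which everything rests on the elementary fact that shearing $x\mapsto x+ty$ shifts the direction coordinate $(x-a)/y$ by exactly $t$, uniformly and independently of the root $a$.
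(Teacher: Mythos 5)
Your proposal is correct and takes essentially the same route as the paper: the explicit shear $f_t(x,y)=(x+ty,y)$ on the half-plane together with translation $s\mapsto s+t$ on each pencil of rays, verified to be a continuous ${\Bbb R}$-action compatible with the horizontal foliation. If anything, your continuity check at the ideal points (using the full convergence criterion $(x_n-a)/y_n\to s$ for arbitrary approximating sequences) is more careful than the paper's, which only tests sequences lying on the limiting ray.
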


\begin{proof} 
%
Such a flow $f\colon {\Bbb R}\times P\to P$
is obtained as follows: given $p\in P$ and $t\in{\Bbb R}$
there are two cases:

(i) If $p$ is a ``genuine'' point (i.e., not a ``ray'') then
$p=(x,y)$, and define $f(t, (x,y))=(x+yt,y)$. [Geometrically,
 given $p$,  draw the vertical line through $p$
and take its intersection with the ($y=1$)-line, to which a
horizontal translation of amplitude $t$ is operated (to get
the point $q=(x+t,1)$), and  $f(t,p)$ is the intersection
point of the line through $(x,0)$ and $q$ with the horizontal
line at height $y$.]

(ii) If $p$ is a ray emanating from $(x,0)$, then $p$ has some
``slope'' $s$ defined as $\frac{\Delta x}{\Delta y} $, and
$f(t,p)$ is defined as the ray through $(x,0)$
of slope $s+t$.

Both prescriptions are consistent: consider a
sequence
$p_i$ of points converging to a ray through $(x_0,0)$ of slope
$s$. The ray has an equation $x=sy+x_0$, and we may assume
that the $p_i=(x_i,y_i)$ lye (eventually) on this ray while
converging to it. Then
$f(t,p_i)=(x_i+y_it,y_i)=(sy_i+x_0+y_it,y_i)=((s+t)y_i+x_0,y_i)$,
showing that the $f(t,p_i)$'s  converge to the ray of slope
$s+t$ through $(x_0,0)$.

Finally the group property
holds:  $f(t',f(t,(x,y)))=f(t',
(x+yt,y))=(x+yt+yt',y)=(x+y(t+t'),y)=f(t+t',(x,y))$, while for
rays the property is obvious.
\end{proof}

\begin{exam}\label{Kronecker-Pruefer} {\rm Such Pr\"ufer
flows
also exist on the $2$-torus irrationally foliated and
Pr\"uferised along a closed interval
in a leaf. (This is made more explicit in case (16) of
Section~\ref{Geography}.) If the Pr\"uferisation
is effected so
that it ``locally'' resembles the doubled Pr\"ufer $2P$
(leading to a separable surface), then the Pr\"ufer flow
is {\it transitive} (i.e., exhibit at least one dense orbit).
In fact, only the points
lying on the ``bridges''
lack a dense orbit. }
\end{exam}

What is the minimal cardinality of
non-dense orbits for a flow on a non-metric manifold? Small
propagation again gives a quick (Cantor relativistic) answer:

\begin{prop}\label{pseudo-minimal}
A flow on a non-metric manifold has uncountably many non-dense
orbits.
\end{prop}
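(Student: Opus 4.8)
The plan is to argue by contradiction using the small propagation principle, exactly as in the preceding propositions. Suppose $f\colon {\Bbb R}\times M \to M$ is a flow on a non-metric manifold $M$ with only countably many non-dense orbits. Collect these orbits and pick one point $x_n$ on each; let $C=\{x_n : n\in{\Bbb N}\}$. First I would invoke the \emph{phagocytosis principle} (\`a la Morton Brown, cited in the introduction): the countable set $C$ is engulfed in a single chart $U\cong{\Bbb R}^k$ of $M$. Now consider $\Sigma=U$. I claim $\Sigma$ is a propagator. Indeed, take any $y\in M$; its orbit $f({\Bbb R}\times\{y\})$ is either dense or one of the countably many non-dense ones. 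In the second case the orbit meets $C\subset U$, so $y\in f({\Bbb R}\times U)$. In the first (dense) case, the orbit closure is all of $M$, so in particular it meets the nonempty open set $U$; but an orbit meeting $U$ is contained in $f({\Bbb R}\times U)$, hence again $y\in f({\Bbb R}\times\Sigma)$. Thus $\Sigma=U$ is a propagator, and it is Lindel\"of (being a chart), so it is a \emph{small} propagator. By the small propagation principle, $M$ is Lindel\"of, hence metrisable (Urysohn), contradicting non-metrisability.

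The one subtlety — and the step I expect to need the most care — is the dense-orbit case: I am using that if the \emph{closure} of an orbit $O$ is all of $M$ then $O$ itself meets every nonempty open set. This is immediate from the definition of closure ($\overline{O}=M$ means every nonempty open set meets $O$), so there is really no obstacle here; I flag it only because it is the place where "non-dense'' is being used in the contrapositive form. A second point worth a sentence: an orbit that meets $U$ lies inside $f({\Bbb R}\times U)$ because orbits are $f$-invariant, i.e. $f(s,f(t,u))=f(s+t,u)\in f({\Bbb R}\times U)$.

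Everything else is routine and already packaged in the excerpt: phagocytosis is quoted in the introduction as a consequence of Morton Brown's monotone-cell-union theorem, and the implication "small propagator $\Rightarrow$ Lindel\"of $\Rightarrow$ metric'' is the content of the definition of propagator together with Urysohn's metrisation theorem. So the proof is genuinely short. An alternative phrasing avoiding phagocytosis would chart each $x_n$ separately and take $\Sigma=\bigcup_n U_n$, a countable union of charts, which is still Lindel\"of — but using phagocytosis is cleaner and matches the style of Theorem~\ref{hairy_ball_thm:separable}. I would present the phagocytosis version in the main text.
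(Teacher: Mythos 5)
Your proof is correct and is essentially the paper's argument: build a small (Lindel\"of) propagator from a chart together with one point from each of the countably many non-dense orbits, observe that dense orbits necessarily meet the chart, and conclude that $M$ is Lindel\"of, hence metric. The only difference is that you invoke phagocytosis to engulf the chosen points into the chart, which is harmless but unnecessary overhead --- the paper simply aggregates the bare countable set to the chart, since a chart plus countably many points is already Lindel\"of.
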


\begin{proof} Otherwise a small propagator is designed by
aggregating to a chart countably many points picked in the
non-dense orbits.
\end{proof}

Thus, the minimal cardinality
in question is at least $ \omega_1$ (and at most $\frak c=
{\rm card} {\Bbb R}$, for the cardinality of a connected
Hausdorff $n$-manifold is $\frak c$, provided $n\ge 1$
\cite{Spivak}, \cite[Thm 2.9]{Nyikos84}). The Pr\"ufer flow
of Example~\ref{Kronecker-Pruefer} has exactly $\frak c$ many
non-dense orbits, hence
realises the lower-bound
under the continuum hypothesis (CH). If the negation of (CH)
holds (i.e., $\omega_1 < \frak c$), we may
throw away  non-dense orbits until precisely $\omega_1$ are
left. In conclusion the answer is $\omega_1$, yet the
``exact'' size of $\omega_1$ depends on (CH).

\subsection{Cytoplasmic obstruction to non-singular flows}\label{CTD:Section}

Since non-metric manifolds
lack the best possible {\it minimal} dynamics, we switch to
the weaker paradigm of non-singular flows (alias {\it
brushes}), by
wondering which
manifolds can support them?

From
(\ref{Pruefer_flow}),
the Pr\"ufer surfaces (in all
its three
incarnations: $P$, $2P$ and $P_{\rm collar}$) all admit a
brush. By contrast, we know already one
non-metric surface lacking a brush, namely the long plane
${\Bbb L}^2$
(\ref{longplane-no-brush}).
Regarding the Moore surface, we noticed in
(\ref{Whitney's_flow_disruption_on_Moore}) that the natural
vertical foliation
lacks a compatible brush (a priori not excluding the
existence of a brush
inducing a more exotic foliation).
Yet, no such exotica with  Moore:
\begin{prop}
The Moore surface lacks any brush.
\end{prop}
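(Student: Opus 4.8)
The plan is to show that, under any brush $f$ on the Moore surface $M$, the Pr\"ufer half-plane $H$ (the ``core'') is a \emph{propagator}: since $H\cong{\Bbb R}^2$ is Lindel\"of and ${\Bbb R}$ is $\sigma$-compact, ${\Bbb R}\times H$ would be Lindel\"of, hence so would its continuous image $M=f({\Bbb R}\times H)$ — but a Lindel\"of manifold is metrisable (Urysohn), whereas $M$ is not, a contradiction. So everything reduces to one assertion: every orbit of $f$ meets $H$, i.e. no orbit of a brush can be trapped inside the closed complement $M\setminus H$.

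To prove this I would first record the morphology: $H$ is open in $M$, and $M\setminus H$ is the disjoint union of the ``thorns'' $T_a$ ($a\in{\Bbb R}$), each $T_a$ being the image of a boundary line of the bordered Pr\"ufer surface under the fold $x\sim -x$, hence homeomorphic to $[0,\infty)$; moreover the $T_a$ are pairwise separated, each being clopen in $M\setminus H$. Suppose an orbit $O$ avoided $H$. By connectedness $O$ lies inside a single $T_a$. Now run the standard dichotomy: as $f$ is a brush, $O$ is not a point; $O$ is not a periodic orbit either, since a periodic orbit is an embedded circle and $[0,\infty)$ contains none; hence the orbit map $t\mapsto f(t,x_0)$ is injective, and composing with a homeomorphism $c\colon T_a\to[0,\infty)$ yields a continuous injection ${\Bbb R}\to[0,\infty)$, necessarily strictly monotone. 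Then $\alpha:=\lim_{t\to-\infty}c(f(t,x_0))$ exists and is finite (a monotone limit bounded below by $0$), so $f(t,x_0)$ converges in $T_a\subseteq M$ to $q:=c^{-1}(\alpha)$ as $t\to-\infty$. The usual ``a limit of an orbit is a rest point'' argument finishes it: for every $s$, continuity of $f_s$ gives $f_s(q)=\lim_{t\to-\infty}f_s(f(t,x_0))=\lim_{t\to-\infty}f(s+t,x_0)=q$, so $q$ is stationary, contradicting that $f$ is a brush. (Equivalently: $\overline{O}=O\cup\{q\}$ is flow-invariant, forcing the singleton $\{q\}$ to be invariant.) Hence $H$ is a propagator and the proof is complete.

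The step I expect to be the real work is this trapping argument, and within it the point-set bookkeeping rather than the dynamics: one must check that $H$ is genuinely open in $M$ and Lindel\"of, that $M\setminus H$ decomposes into the relatively clopen pieces $T_a$ (so a connected orbit cannot straddle two thorns), and that the subspace topology on a thorn is indeed that of $[0,\infty)$, so that monotonicity of the orbit's radial coordinate produces an honest limit point inside $M$. Once that is in place, the dynamical input is only the elementary facts that periodic orbits embed and that orbit-limits are fixed points. I would also note that this is just the prototype of the general \emph{cytoplasmic} (core--thorn) obstruction of Section~\ref{CTD:Section}: any manifold admitting such a decomposition with Lindel\"of core carries no brush, for exactly this reason.
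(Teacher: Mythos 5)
Your proof is correct and takes essentially the same route as the paper: the Moore surface's core--thorn morphology forces the Lindel\"of core to be a propagator for any brush (since no orbit can stay trapped in a thorn $\approx{\Bbb R}_{\ge 0}$), whence $M$ would be Lindel\"of and metrisable, a contradiction; your trapping argument is exactly the detail the paper leaves implicit. One tiny slip: if the radial coordinate of a trapped orbit is strictly \emph{decreasing}, the finite monotone limit is attained as $t\to+\infty$ rather than $t\to-\infty$, so you should take whichever semi-orbit is bounded (or reverse time) before extracting the rest point.
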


\begin{proof}
The Moore surface consists of a ``core'' corresponding to the
interior of $P$, plus a continuous collection of thin
``thorns'' hanging on (picturesquely like thin
stalactites), cf. Figure~\ref{Artist_views}. This very
specific
morphology implies the core to be a propagator under any
brush, for any point on a thorn (homeomorphic to a semi-line
${\Bbb R}_{\ge 0}$) will eventually flow into the core.
\end{proof}

This argument readily generalises to the following class of
surfaces (indeed manifolds of arbitrary dimensionality)
abstracting
the morphology of the Moore surface. (This structure---which
we
shall refer to as {\it cytoplasmic}---may perhaps be regarded
as a ``separable'' counterpart
to
the {\it bagpipe} structure of Nyikos~\cite{Nyikos84}
concretising the point-set paradigm of $\omega$-boundedness):

\begin{defn} {\rm A
manifold,
$M$, is said to admit a {\it cytoplasmic} (or {\it
core-thorn})
{\it decomposition} {\rm (both abridged CTD)} if it contains
an open set $U$ which is Lindel\"of (the {\it core}) such that
the residual set $M-U$ decomposes into connected components as
$\bigsqcup_{x\in X} T_x$, where each {\it thorn} $T_x$ is a
bordered one-manifold with a single boundary-point, so
homeomorphic either to ${\Bbb R}_{\ge 0}$ or to ${\Bbb L}_{\ge
0}$.
(The unique boundary point of each thorn is used as indexing
parameter.) Further we assume the following axiom:

{\rm (CT)} Each point $z\in T_x$ on a thorn admits a
fundamental system of open neighbourhoods $U_z$ consisting of
points on the thorn plus some non-empty set of point in the
core.}
\end{defn}

The following properties are easily verified:

(CT1) For each thorn $T_x$, the set $U\cup T_x$ is
open.

(CT2) The core $U$ is dense in $M$, and being Lindel\"of
(hence separable, since manifolds are locally second
countable) it follows that $M$ is separable.

\smallskip
Surfaces
tolerating
a cytoplasmic decomposition include the Moore and the {\it
Maungakiekie surface\footnote{So called after a certain hill
in New Zealand surmounted by a thin tower.}}. The latter
refers to the result of a unique cytoplasmic expansion of an
open $2$-cell by a long ray, as discussed in
(\ref{cytoplasmic_expansions}). To mention an example in
dimension $3$, one may consider the $3$-dimensional avatar of
the bordered Pr\"ufer surface, say $P^3$ (likewise constructed
from the half-$3$-space by adjunction of an ideal boundary of
``rays'') and fold the boundary-components (homeomorphic to
${\Bbb R}^2$) by collapsing all points on concentric circles.

\begin{exam} \label{Pruefer_Moore:fabric:exam}
(Pr\"uferisation and Moorisation) {\rm For more  (including
non simply-connected) examples, one needs only to
plagiarise the Pr\"ufer construction
over any
metric bordered surface, $W$, (e.g., an annulus ${\Bbb
S}^1\times[0,1]$), producing a non-metric bordered surface,
$P(W)$, called the {\it Pr\"uferisation} of $W$. Then, folding
the boundaries gives a {\it Moorisation}, $M(W)$. The latter
is separable and comes with a (canonic) cytoplasmic
decomposition. (For more details, compare
Definition~\ref{Pruefisation-Moorisation}.)}
\end{exam}


\begin{theorem} \label{CTD:thm} A non-metric
manifold with a
cytoplasmic decomposition $M=U \sqcup \bigsqcup_{x} T_x$ has
no brush.
\end{theorem}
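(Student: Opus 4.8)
The plan is to show that the core $U$ is always a propagator for any brush on $M$, and then invoke the small-propagation principle: since $U$ is Lindel\"of and $\mathbb{R}$ is $\sigma$-compact, surjectivity of $f\colon \mathbb{R}\times U \to M$ would force $M$ to be Lindel\"of, hence metric, contradicting the hypothesis. So everything reduces to the following claim: under any brush $f$ on $M$, every point of $M$ lies on the orbit of some point of $U$; equivalently, every orbit meets the core $U$.

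First I would reduce to a single thorn. Suppose some orbit avoids $U$ entirely. Since $M-U = \bigsqcup_x T_x$ with the $T_x$ the connected components, and each orbit is connected, such an orbit is contained in a single thorn $T_x$, which is homeomorphic either to $\mathbb{R}_{\ge 0}$ or to $\mathbb{L}_{\ge 0}$ — in particular to a (possibly long) half-line with one boundary point, the indexing point $x$. So the heart of the matter is: a brush cannot have an orbit trapped inside a half-line thorn. Here I would use that an orbit of a brush is a non-stationary, hence a continuous injective-up-to-period image of $\mathbb{R}$; on a one-manifold the orbit, being a connected flow-invariant subset, is an order-interval of the thorn. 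The flow restricted to $T_x\cong \mathbb{R}_{\ge 0}$ (or $\mathbb{L}_{\ge 0}$) would then be a brush on a half-line, which is impossible by an elementary argument: the time-one map $f_1$ is an orientation-preserving homeomorphism of the half-line with no fixed point (brush), so it pushes everything monotonically toward one end; but the closed end (boundary point $x$) is fixed by any homeomorphism of $\mathbb{R}_{\ge 0}$ or $\mathbb{L}_{\ge 0}$, a contradiction. Thus no orbit can be confined to a thorn, so every orbit meets $U$.

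The one place that needs care — and which I expect to be the main obstacle — is handling the possibility that an orbit is not entirely inside one thorn but still somehow fails to meet $U$: a priori an orbit could oscillate out to the thorns without a connectedness obstruction only if the thorns were not the connected components, but they are, so connectedness does pin the orbit down. The genuinely delicate point is rather the topological input of axiom (CT): one must know that a point $z$ strictly inside a thorn $T_x$, when flowed, is pushed along $T_x$ and that "eventually it flows into the core'' — i.e. that the orbit of $z$ cannot just sit on $T_x$ forever. This is exactly the half-line-brush impossibility above, but it must be checked that the flow-invariant connected subset of $T_x$ containing $z$ genuinely behaves like a sub-flow of a half-line; axiom (CT) guarantees that $U\cup T_x$ is open (property (CT1)), so $T_x$ inherits a well-defined subspace one-manifold structure and the restricted partial dynamics make sense near points of $T_x\setminus\{x\}$.

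In summary: (1) an orbit of a brush missing $U$ lies in one thorn $T_x$ by connectedness of orbits and of thorns; (2) a thorn is a half-line $\mathbb{R}_{\ge 0}$ or $\mathbb{L}_{\ge 0}$ whose boundary point is fixed by every self-homeomorphism, so it supports no brush — contradiction; hence (3) $U$ is a propagator; (4) $U$ is Lindel\"of (the core), so $f\colon\mathbb{R}\times U\to M$ surjective forces $M$ Lindel\"of, hence (being a manifold) metrisable, contradicting non-metrisability. Therefore $M$ has no brush.
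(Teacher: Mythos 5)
Your overall strategy is exactly the paper's: the core $U$ is a small (Lindel\"of) propagator for any brush, so surjectivity of $f\colon\mathbb{R}\times U\to M$ forces $M$ to be Lindel\"of, hence metric, contradicting non-metrisability. The paper's own proof stops at the bare observation that any point on a thorn eventually flows into the core, so your attempt to actually justify this is the right instinct, and your reduction to a single thorn (connectedness of orbits versus the components $T_x$ of $M-U$) is correct.

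The step that fails as written is ``the flow restricted to $T_x$ would then be a brush on a half-line.'' Nothing makes the thorn $T_x$ flow-invariant: only the one exceptional orbit is assumed to lie in $T_x$, while other points of the thorn are free to flow into the core. So $f_1$ is not a self-homeomorphism of $T_x$, and you cannot invoke the fact that every self-homeomorphism of $\mathbb{R}_{\ge 0}$ or $\mathbb{L}_{\ge 0}$ fixes the boundary point. Moreover, even the trapped orbit itself could a priori be an \emph{open} subinterval $(a,b)$ of the thorn avoiding the thorn's boundary point, so no distinguished endpoint lies on the orbit. The repair is to run your endpoint argument on the \emph{closure} of the orbit instead: since $U$ is open, $M-U$ is closed, hence each component $T_x$ is closed in $M$, so $\overline{\mathbb{R}z}\subset T_x$; this closure is a flow-invariant closed subinterval of the thorn and therefore contains at least one endpoint $p$ (a non-cut point). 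Each $f_t$ restricts to a self-homeomorphism of $\overline{\mathbb{R}z}$ permuting its at most two non-cut points, and continuity of $t\mapsto f_t(p)$ together with connectedness of $\mathbb{R}$ forces $f_t(p)=p$ for all $t$; thus $p$ is stationary, contradicting the brush hypothesis. With this substitution your proof closes up. (Axiom (CT) plays no real role here; what matters is that the thorns are closed in $M$ and are half-lines.)
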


\begin{proof} It is again the matter of observing that the core
is a small propagator for any brush on $M$.
\end{proof}

Corollaries of the theorem includes:

\begin{cor}
{\rm (i)} The Moore surface (and all its avatar $M(W)$ in {\rm
(\ref{Pruefer_Moore:fabric:exam})}), as well as the
Maungakiekie (and a myriad of other
``protozoans'' deduced via cytoplasmic expansions) are not
brushings. 

\noindent {\rm (ii)} The doubled Pr\"ufer surface, $2P$, as it
is a brushing {\rm (\ref{Pruefer_flow})}, does not
tolerate a cytoplasmic decomposition.

\end{cor}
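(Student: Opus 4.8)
The plan is to derive both parts from Theorem~\ref{CTD:thm}: part~(i) by exhibiting, for each surface on the list, a cytoplasmic decomposition together with the failure of metrisability, and part~(ii) as the mere contrapositive.

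For~(i) I would treat each family in turn. The Moore surface carries its built-in CTD with core $U$ the interior of the bordered Pr\"ufer surface $P$ (an open $2$-cell, hence Lindel\"of) and thorns the $\frak c$ many folded boundary rays, each $\cong {\Bbb R}_{\ge 0}$; axiom~(CT) holds because in the Pr\"ufer manifold the topology about a thorn point is modelled on a region of the half-plane $H$ clinging to the corresponding ideal boundary ray, so every such neighbourhood meets $U$. Non-metrisability is read off the decomposition itself: $M-U=\bigsqcup_x T_x$ is closed in $M$ with uncountably many components, each clopen in $M-U$, which rules out Lindel\"ofness, \emph{a fortiori} metrisability. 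The Moorisations $M(W)$ of Example~\ref{Pruefer_Moore:fabric:exam} come with a canonical CTD by construction, and are non-metric by the same counting of thorns (e.g.\ for $W$ an annulus). The Maungakiekie is the open $2$-cell enlarged by a single cytoplasmic expansion, so its core is that $2$-cell and it has a lone thorn $T\cong {\Bbb L}_{\ge 0}$, the adjoined long ray of~\ref{cytoplasmic_expansions}; here non-metrisability comes from the closed embedded long ray, and axiom~(CT) is precisely the design feature of Nyikos' pseudopodium attachment. The further ``protozoans'' obtained from one or several such expansions of a $2$-cell are handled verbatim. In every case Theorem~\ref{CTD:thm} applies and forbids a brush.

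For~(ii), assume towards a contradiction that $2P$ admitted a cytoplasmic decomposition. As $2P$ contains the non-metric space $P$ as a subspace it is itself non-metric, so Theorem~\ref{CTD:thm} would deny it a brush. But the windscreen-wiper flow of Proposition~\ref{Pruefer_flow} leaves the boundary $\partial P$ invariant, so the flow placed identically on the two copies of $P$ fits together into a fixed-point-free flow on $2P=P\cup P$; thus $2P$ is a brushing, contradicting the previous sentence. Hence $2P$ tolerates no CTD. The essential content here is already in Theorem~\ref{CTD:thm}; the only genuine verifications left, namely axiom~(CT) and the failure of Lindel\"ofness for the cytoplasmic-expansion examples, are immediate from Nyikos' construction and from counting thorn-components respectively, so no serious obstacle is expected.
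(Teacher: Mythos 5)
Your proposal is correct and is precisely the route the paper intends: part (i) is Theorem~\ref{CTD:thm} applied to the cytoplasmic decompositions already exhibited for the Moore surface, the Moorisations $M(W)$ and the Maungakiekie, and part (ii) is the contrapositive via the brush of Proposition~\ref{Pruefer_flow}. The extra verifications you supply (axiom (CT), non-Lindel\"ofness from the uncountably many clopen thorn-components, invariance of $\partial P$ under the windscreen-wiper flow so that it doubles to $2P$) are exactly the details the paper leaves implicit, and they check out.
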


The failure of the Moore surface to brush suggests the
following question (which will be answered as
Theorem~\ref{hairy_ball_thm:separable}):

\begin{ques}\label{Pseudo-Moore:conj:question} (Pseudo-Moore problem) A pseudo-Moore
surface---defined as one  satisfying the following four axioms
(verified by the classic Moore surface): {\rm (i)}
simply-connected, {\rm (ii)} separable, {\rm (iii)}
non-metric, {\rm (iv)} without boundary---lacks a brush.
\end{ques}

Relaxing any of the four assumptions
foils the conclusion: If (i) is relaxed take $2P$, if (ii) is
omitted ${\Bbb R} \times {\Bbb L}$ works, if (iii) is
suppressed ${\Bbb R}^2$ is fine, and if (iv) is left $P$ is
suitable. The two-dimensionality is also crucial, since the
$3$-manifold ${\Bbb R} \times ({\rm Moore})$ has a brush.

Our initial motivation for
cytoplasmic decompositions
was an attempt to solve the above Pseudo-Moore problem.
(Speculating that any pseudo-Moore surface permits a CTD,  the
problem reduces to Theorem~\ref{CTD:thm}.) Our later solution
follows an entirely different route---at least in
appearances---using more
geometric methods (non-metric
Schoenflies and phagocytosis). 
%
%
Yet, by analogy with the ubiquity of Nyikos' bagpipe structure
(in the $2$-dimensional realm at least), it sounds natural to
conjecture:

\begin{conj}\label{cytoplasmic-dec:conj} Any
separable $2$-manifold with
countable fundamental group
has a cytoplasmic decomposition.
\end{conj}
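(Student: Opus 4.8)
The plan is to build the core $U$ from a countable dense set by phagocytosis, and then to read the thorns off the residue $M\setminus U$, using the bound on $\pi_1$ precisely to force that residue to be ``thin''. First I would fix, by separability, a countable dense $D\subseteq M$; by the phagocytosis principle (Gauld \cite{Gauld_2009}, after Morton Brown \cite{Brown_1962}) $D$ lies in an open $2$-cell $U_0$, which is then second countable, hence Lindel\"of, and dense since it contains $D$. Next I would \emph{enlarge} $U_0$ --- by a monotone union of charts in the spirit of \cite{Brown_1962} --- to a maximal dense open Lindel\"of subset $U$, maximal in the sense that no isolated point, no free arc-tip and no null-homotopic ``bump'' of $C:=M\setminus U$ can be swallowed while keeping the set open and Lindel\"of (each single absorption is harmless, since adjoining a Lindel\"of piece to a Lindel\"of space preserves Lindel\"ofness).

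The residue $C=M\setminus U$ is then closed and nowhere dense; it is also non-Lindel\"of, for $M$ is not Lindel\"of (being non-metric) while a union of two Lindel\"of subspaces would be Lindel\"of. By invariance of domain a closed nowhere-dense subset of a surface contains no $2$-cell, so $C$ has covering dimension $\le 1$; but ``$\dim C\le 1$'' by itself still permits dendrites, Cantor dust, branch points and two-ended arcs, and none of these is a thorn. Eliminating them is the heart of the matter.

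Here the hypothesis on $\pi_1$ enters. The guiding example is the doubled Pr\"ufer surface $2P$, which is separable but has $\pi_1$ free on continuum-many generators and hence admits no CTD (Theorem~\ref{CTD:thm} and its corollary): the point is that every ``fat'' handle of $C$, every branch point of $C$, and every component of $C$ touching $U$ ``on both sides'' buys a free generator of $\pi_1(M)$, typically detected by a van Kampen comparison of $\pi_1(U\cup T_x)$ with $\pi_1(U)$. In a non-metric separable surface such contributions necessarily come in an uncountable supply (since $C$ is non-Lindel\"of, it has either uncountably many components or a long component, each liable to one of the three defects), so countability of $\pi_1(M)$ forces $C$ to be branch-free, $2$-cell-free and one-sidedly attached; combined with the maximality of $U$, which has already absorbed all absorbable dust and tips, each component $T_x$ of $C$ is then a non-compact connected $1$-manifold with exactly one boundary point --- a circle or a two-ended line would again inflate $\pi_1$, and a compact arc would have been partly absorbed --- whence $T_x\cong\mathbb{R}_{\ge 0}$ or, in the long case, $T_x\cong\mathbb{L}_{\ge 0}$. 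Axiom (CT) is then local: by density $U$ appears in every neighbourhood of every interior point of a thorn, and one-sidedness supplies the neighbourhood basis; (CT1) follows from (CT), and (CT2) is automatic since $U$ is a dense Lindel\"of (hence separable) open set.

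The hard part is this last step: turning ``$\pi_1$ countable'' into the quantitative assertion that the residue is unbranched, $2$-cell-free and one-endedly collared. I expect it to demand (a) careful relative fundamental-group bookkeeping along the lines of the $2P$ computation, to certify that each surviving handle or two-sided component really does contribute a free generator, and (b) a transfinite refinement of the monotone-union surgery of the first step, so that after all absorptions the leftover is genuinely a disjoint family of rays --- including the delicate case of a \emph{long} thorn, where the collaring must be propagated along an entire closed long ray, presumably by a canonical-exhaustion argument in the spirit of Nyikos' cytoplasmic expansions \cite{Nyikos90}.
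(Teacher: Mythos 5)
The statement you are addressing is left as an \emph{open conjecture} in the paper: the authors give no proof of Conjecture~\ref{cytoplasmic-dec:conj}, only the closing remark that a proof would ``quite likely depend on techniques \`a la Morton Brown'', in particular on the (itself conjectural) inflation principle (\ref{Inflation:conj}), with the core produced by inflating an open set carrying a basis of $\pi_1$ and Schoenflies used to fill inessential circles. Your opening move---phagocytose a countable dense set into a chart, then enlarge it to a dense Lindel\"of core and analyse the residue---is essentially the route they envisage, so there is no divergence of strategy to report; but what you have written is a strategy sketch with the decisive steps missing, as you yourself concede in your last paragraph.

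The concrete gaps are these. First, the ``maximal dense open Lindel\"of $U$'' does not come from a monotone union for free: an increasing $\omega_1$-chain of Lindel\"of open sets can have non-Lindel\"of union, so Zorn's lemma does not apply to the family you describe, and you would have to show that the absorption process terminates after countably many stages. Second, the central claim---that countability of $\pi_1(M)$ forces the residue $C=M\setminus U$ to be a disjoint union of rays attached in the Pr\"ufer/Moore manner---is asserted rather than argued. Nothing in ``closed, nowhere dense, non-Lindel\"of, of dimension $\le 1$'' makes the components of $C$ one-manifolds: they could be wild continua, Cantor fans, or sets with no manifold points at all, and the proposed $\pi_1$ bookkeeping does not obviously exclude this; indeed the paper itself declines to commit even to the injectivity of $\pi_1(U)\to\pi_1(M)$ (a thorn embedded as a wild arc \`a la Fox--Artin can kill classes of $\pi_1(U)$, so a branched or two-sidedly attached piece of $C$ need not contribute a free generator). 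Third, axiom (CT) requires every thorn point to have a neighbourhood \emph{basis} of the special thorn-plus-core form; density of $U$ only gives that $U$ meets every neighbourhood, which is strictly weaker. Until these three points are supplied, the statement remains the open conjecture it is in the paper.
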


The proviso on the fundamental group is of course essential
(else $2P$ is a counterexample). The more intrinsic reason is
of course that the condition
looks necessary: for, given a cytoplasmic structure on a
surface $M$, it is likely that the natural morphism $\pi_1(U)
\to \pi_1(M)$ induced by the ``core'' inclusion is isomorphic
(we leave this in {\it une ombre propice}\footnote{Well-known
phraseology of Ren\'e Thom.}, since we shall not use it). At
any rate, a dynamical consequence of
(\ref{cytoplasmic-dec:conj}) is that any such surface, if
non-metric, lacks a brush in view of (\ref{CTD:thm}). This
assertion will be proved later
(Corollary~\ref{separable_hairy_ball_pi_1_ctble:cor}), via
a trivial reduction to the simply-connected case
(passage to the universal covering). In case
Conjecture~\ref{cytoplasmic-dec:conj} should be
 true (and of real independent
interest), then it is quite likely that the proof will depend
on techniques \`a la Morton Brown (cf. especially what we call
the {\it inflation conjecture} discussed later as
item~(\ref{Inflation:conj}): this is susceptible to produce
the ``core'' via inflation of an open set covering faithfully
a basis of the $\pi_1$, after using Schoenflies to fill
inessential circles).


\subsection{Dynamical Euler characteristic:
Cartesian multiplicativity? }

Now, a little curiosity: define the {\it (dynamical)
characteristic} $\delta(M)\in {\Bbb F}_2$ (the field with two
elements) of a manifold $M$ as equal to $0$ if $M$
has a brush, and $1$ otherwise.

\begin{ques}\label{multiplicative} Is $\delta$
multiplicative under Cartesian products, i.e. $\delta(M\times
N)=\delta(M)\cdot \delta (N)$?
\end{ques}

The formula is obvious if one of the two factors has vanishing
$\delta$-characteristic (one can brush the product from one of
its factor). If ``multiplicativity'' holds it would
imply $\delta({\Bbb L}^n)=1$ (and
$\delta({\Bbb
L}_{+}^n)=1$). This
reminds the question of Kuratowski as to whether the
product of two spaces having the fpp (fixed point property)
still has the fpp: the failure is well-known (cf. R.\,F.
Brown's survey~\cite{Brown_RF_1982}).
The case $n=2$ of $\delta({\Bbb L}^n)=1$ follows from the
classification of foliations on ${\Bbb L}^2$
(\ref{longplane-no-brush}). Maybe, the square of the Moore
surface, ${M}\times M$,  provides a negative answer to
(\ref{multiplicative}): for ${ M}\times { M}$ may have a
brush, since the product of thorns $T_x$ are quadrants ${\Bbb
R}_{\ge 0}^2$ which tolerate a brush
(not constrained
to move into the core).

\subsection{Fixed point property for flows via
sequential-compactness: the case of ${\Bbb L}^n$}

The general case of $\delta({\Bbb L}^n)=1$ can be proven
independently of (\ref{multiplicative}) via the:

\begin{prop}\label{fpp-for-flows} Let $M$ be a
sequentially-compact manifold with the fpp (fixed point
property).  Then $\delta(M)=1$, i.e. $M$ has the fpp for
flows. (In fact ``manifold'' can be replaced by ``Hausdorff
space'', and it is enough to assume fpp for homeomorphisms
isotopic to the identity.)
\end{prop}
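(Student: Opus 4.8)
The plan is to exploit sequential-compactness to produce, via a limiting argument along the flow, a point which is "almost fixed" for all time-maps simultaneously, and then upgrade this to a genuine fixed point of some $f_t$ by invoking the fixed point property (fpp) of $M$ applied to a single homeomorphism isotopic to the identity. The guiding observation is that a brush $f$ has no stationary points, so every orbit is a nondegenerate continuous image of $\Bbb R$; we want to manufacture a contradiction with sequential-compactness.

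\textbf{Step 1 (reduce to a time-map without fixed points).} Suppose $f\colon\Bbb R\times M\to M$ is a brush. I claim each $f_t$ with $t\neq 0$ is fixed-point-free: a fixed point $x$ of $f_t$ has a periodic orbit, but more to the point, I actually want to show \emph{some} $f_t$ ($t\neq 0$) is fixed-point-free, contradicting the fpp since $f_t$ is isotopic to the identity via $s\mapsto f_{st}$. So the real content is: a brush on a sequentially-compact manifold must have some time-map with a fixed point. Equivalently, it suffices to show that if \emph{every} $f_t$ ($t\neq0$) were fixed-point-free, we reach a contradiction with sequential-compactness.

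\textbf{Step 2 (the limiting/pigeonhole argument — the heart).} Pick any $x_0\in M$ and consider the sequence $x_n=f_n(x_0)=f(n,x_0)$, $n\in\Bbb N$. By sequential-compactness, pass to a subsequence $x_{n_k}\to p$. The standard dynamical move is to consider the "return times" $t_k=n_{k+1}-n_k\to\infty$ and look at $f_{t_k}(x_{n_k})=x_{n_{k+1}}$; both $x_{n_k}$ and $x_{n_{k+1}}$ converge to $p$. The hoped-for conclusion is that $p$ is a quasi-fixed point: for the compact intervals of time one gets, in the limit, $f_s(p)$ staying near $p$. Made precise, one shows $p$ lies in its own $\omega$-limit set and, using continuity of $f$ on $[0,1]\times M$ together with the convergence, that the orbit of $p$ is recurrent in a strong enough sense to force—via the flow being a brush and a local flow-box/cross-section near $p$ (available because the chart-orbit of a Euclidean neighbourhood of $p$ is Lindelöf, hence metric, so Whitney–Bebutov applies as in Theorem~\ref{folklore})—a contradiction. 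Concretely: near the non-stationary point $p$ there is a flow box $(-\epsilon,\epsilon)\times D$ with $p\in\{0\}\times D$; the orbit of $p$ cannot return to $D$ arbitrarily often from the "same side" without the first-return map on $D$ having a fixed point (Brouwer, using fpp of the cell $D$), which would give a periodic point and hence, choosing $t$ equal to that period, a fixed point of $f_t$ with $t\neq0$—exactly what we want.

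\textbf{Step 3 (conclude).} Having located $t\neq0$ with $f_t$ having a fixed point is itself already a contradiction-free situation; rather, the contradiction is the reverse: \emph{assuming} $M$ has fpp, we instead argue directly that a brush is impossible by showing \emph{every} $f_t$, $t\neq0$, \emph{is} fixed-point-free (no stationary points plus no periodic orbits—the latter because a periodic orbit is a circle in a sequentially-compact manifold, which is fine, so this needs the flow-box argument to rule out recurrence). Then $f_1$ is a fixed-point-free self-homeomorphism isotopic to $\mathrm{id}_M$, contradicting the fpp (even in its weak form for homeomorphisms isotopic to the identity). For the parenthetical generalisation, note nothing above used local Euclidean structure except the flow-box, which only needs the chart-orbit trick; in a general Hausdorff space one instead argues abstractly that the sequence $f_n(x_0)$ has a cluster point $p$ whose orbit-closure is compact and flow-invariant, and minimality inside it plus sequential-compactness forces a fixed point of the flow itself (contradicting "brush"), or alternatively produces the fixed-point-free isotopy-to-identity homeomorphism needed.

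\textbf{The main obstacle} I expect is Step 2: turning "the orbit through $p$ returns near $p$ infinitely often" into an actual periodic point or stationary point. The naive limit argument only gives a point whose $\omega$-limit set contains it (a recurrent point), not a periodic or fixed one; bridging that gap is precisely where one must use the local cross-section and a Brouwer-type fixed-point argument on the (metric, hence honest) flow-box, and must be careful about orientation/side issues of the return map and about the possibility that return times do not give a genuine Poincaré first-return map. Handling a thin or pathological orbit of $p$ near the cross-section, and ensuring the first-return map is well-defined on a full cell, is the delicate point.
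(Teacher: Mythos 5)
Your proposal does not work as written, and it misses the short argument the paper actually uses. The paper's proof runs in the opposite logical direction: one does \emph{not} try to show that a brush forces some (or every) time-map $f_t$ to be fixed-point-free. Instead one uses the fpp positively: each $f_{t_n}$ with $t_n=1/2^n$ is a homeomorphism isotopic to the identity, so its fixed-point set $K_n$ is non-empty and closed; the group law gives the nesting $K_{n+1}\subset K_n$ (a point fixed by $f_{t/2}$ is fixed by $f_t=f_{t/2}\circ f_{t/2}$); sequential compactness then yields a point in $\bigcap_n K_n$, which is fixed for all dyadic times and hence, by continuity and density, for all times. This ``dyadic cascadisation'' is the whole proof, it uses no local structure, and it is exactly why the parenthetical generalisation to Hausdorff spaces holds.

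The concrete gaps in your route are these. First, your Step 3 asks to show that for a brush \emph{every} $f_t$, $t\neq 0$, is fixed-point-free; that is simply false (a brush may have periodic orbits, e.g.\ the rotation flow on ${\Bbb S}^1\times{\Bbb L}$, so $f_T$ fixes a whole circle for $T$ the period). What the proposition actually yields, via its contrapositive, is only that \emph{some} dyadic time-map is fixed-point-free, and your recurrence machinery does not produce that. Second, the heart of Step 2 is unsound: extracting a convergent subsequence of $f_n(x_0)$ gives a point of the $\omega$-limit set, not a recurrent point (recurrence requires a further minimal-set argument \`a la Birkhoff), and the first-return map to a cross-section $D$ is in general only partially defined and is not a self-map of $D$, so ``Brouwer on the cell $D$'' does not apply---the Kronecker flow on the torus, where every point is recurrent yet no return map has a fixed point, shows this step cannot be repaired. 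Third, even if you did produce a periodic orbit, a periodic point of period $T$ is a \emph{fixed} point of $f_T$, which is consistent with the fpp and yields no contradiction; the fpp of $M$ is in fact never correctly invoked in your argument. The missing idea is precisely the nested family $K_n={\rm Fix}(f_{1/2^n})$ together with Lemma~\ref{nested_intersection}.
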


\begin{proof} This is the standard\footnote{Compare,
e.g., Lima 1964 \cite[Lemma~4,
p.\,101]{Lima_1964},
Hector-Hirsch~\cite{HectorHirschB}, Vick~\cite[Thm\,7.28,
p.\,208]{Vick}. }
``dyadic cascadisation'' argument.
Let $f\colon {\Bbb R} \times M \to M$ be any flow. Write $f_t$
for the time-$t$ map, i.e. $f_t(x)=f(t,x)$. Let $t_n=1/2^n$
for $n$ an integer $\ge 0$ and let $K_n$ be the fixed-point
set of $f_{t_n}$. The sets $K_n$ are closed ($M$ is
Hausdorff), non-empty ($M$ has the fpp) and  nested
$K_n\supset K_{n+1}$ (as follows from the group property).
Next, observe the:

\begin{lemma}\label{nested_intersection}
If $(K_n)_{n\in \omega}$ is a nested sequence of non-empty
closed sets in a sequentially-compact space $X$, then the
infinite intersection $\bigcap_{n\in \omega} K_n$ is
non-empty.
\end{lemma}

\begin{proof} Choose, for each $n\ge 0$, a point $x_n \in K_n$.
By sequential-compactness the sequence $(x_n)$
has a converging subsequence, whose limit $x$
belongs to the intersection of all $K_n$'s.
\end{proof}

\noindent By  (\ref{nested_intersection}) there
is a point in $\bigcap_{n\ge 0} K_n$, thus fixed under all
dyadic times,
hence under all
times.
\end{proof}

It remains to
notice
a transfinite avatar of the Brouwer fixed point theorem:

\begin{lemma}\label{transfinite-Brouwer}
Let $\mathbb{L}$ be the long line, then
$\mathbb{L}^n$ has the
fixed point property for all $n$.
\end{lemma}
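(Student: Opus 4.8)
\textbf{Proof strategy for Lemma~\ref{transfinite-Brouwer}.} The plan is to reduce the fixed-point property of $\mathbb{L}^n$ to the classical Brouwer theorem on the cube $[0,1]^n$ by exploiting the order-theoretic structure of $\mathbb{L}$, specifically the fact that every continuous real-valued function on $\mathbb{L}_+$ (equivalently, every continuous map $\mathbb{L}_+\to\mathbb{L}_+$ staying bounded below the diagonal) is \emph{eventually constant} --- the same ``closing off at infinity'' phenomenon used in \cite[Lemma~4.3]{BGG} and invoked in the proof of Proposition~\ref{Whitney's_flow-failure-on-plane_pipe}. First I would recall that $\mathbb{L}$ is sequentially compact (indeed $\omega$-bounded), so that $\mathbb{L}^n$ is sequentially compact as well; this is what lets us hope to trap a fixed point at a ``finite'' stage. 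Let $g\colon\mathbb{L}^n\to\mathbb{L}^n$ be continuous, with coordinate functions $g_1,\dots,g_n$.

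The key step is a ``confinement'' argument: I claim there is an ordinal $\alpha<\omega_1$ (one should work with $\mathbb{L}\cong(-\omega_1,\omega_1)$, or more comfortably pass first to $\mathbb{L}_{\ge 0}^n$ via a fixed-point-preserving trick and handle signs coordinatewise) such that $g$ carries the sub-cube $C_\alpha:=[0,\alpha]^n$ into itself. To see this, look at each coordinate $g_i\colon\mathbb{L}^n\to\mathbb{L}$. Restricting $g_i$ to the ``edge'' copies of $\mathbb{L}$ and using that a continuous map from a sequentially compact ($\omega$-bounded) space such as $\mathbb{L}^n$ into $\mathbb{L}$ cannot be cofinal (its image is contained in some $[-\alpha_i,\alpha_i]$, since otherwise one extracts a sequence in the domain whose images diverge, contradicting that a convergent subsequence must have convergent --- hence bounded --- image), we obtain a single $\alpha<\omega_1$ with $g(\mathbb{L}^n)\subseteq[-\alpha,\alpha]^n=:C_\alpha$. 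Here $[-\alpha,\alpha]$ denotes the closed sub-interval of $\mathbb{L}$; being a closed bounded sub-interval it is homeomorphic to the ordinary interval $[0,1]$ (it is a compact connected LOTS with two endpoints and, being a closed subspace of the first-countable-at-interior-points long line of ``length'' $\le\alpha<\omega_1$, it is second countable, hence metrizable, hence $\cong[0,1]$ by the classical characterization). Therefore $C_\alpha\cong[0,1]^n$.

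Once the confinement is in hand, the restriction $g|_{C_\alpha}\colon C_\alpha\to C_\alpha$ is a continuous self-map of a space homeomorphic to $[0,1]^n$, so Brouwer's fixed point theorem produces $x\in C_\alpha$ with $g(x)=x$; since $C_\alpha\subset\mathbb{L}^n$, this $x$ is the desired fixed point of $g$. I would present the sign-handling (passing from $\mathbb{L}^n$ to $\mathbb{L}_{\ge 0}^n$, or directly using the two-ended interval $[-\alpha,\alpha]$) as a routine bookkeeping step, since $\mathbb{L}$ retracts onto $\mathbb{L}_{\ge 0}$ only up to the origin issue, and it is cleaner to just work with the symmetric closed interval $[-\alpha,\alpha]$ from the outset.

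The main obstacle is the confinement step, i.e.\ establishing cleanly that a continuous map $\mathbb{L}^n\to\mathbb{L}$ has image bounded in $\mathbb{L}$ (that the image lies in some proper initial-and-final segment). The one-variable version is standard --- a continuous $h\colon\mathbb{L}_+\to\mathbb{L}_+$ cannot be unbounded, because $\mathbb{L}_+$ is $\omega$-bounded and $\mathbb{L}_+$ has countable cofinality-type obstructions --- but for $\mathbb{L}^n$ I must be a little careful: $\mathbb{L}^n$ is not a LOTS, yet it is still $\omega$-bounded (a finite product of $\omega$-bounded spaces is $\omega$-bounded), and that is exactly the property needed: if $h\colon\mathbb{L}^n\to\mathbb{L}$ had unbounded image, pick $x_k\in\mathbb{L}^n$ with $h(x_k)\to\omega_1$; by $\omega$-boundedness (sequential compactness) a subsequence $x_{k_j}\to x$, whence $h(x_{k_j})\to h(x)<\omega_1$ by continuity, a contradiction (and symmetrically for the $-\omega_1$ end). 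So the ``hard part'' dissolves once one remembers that $\omega$-boundedness passes to finite products; after that the proof is essentially a citation of Brouwer plus the elementary fact that a closed bounded sub-interval of $\mathbb{L}$ is just $[0,1]$.
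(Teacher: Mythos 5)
There is a genuine gap in your confinement step, and it is fatal as stated. You claim that any continuous $h\colon\mathbb{L}^n\to\mathbb{L}$ has image contained in some $[-\alpha,\alpha]$, hence that $g(\mathbb{L}^n)\subseteq[-\alpha,\alpha]^n$ for a single $\alpha<\omega_1$. This is false: the coordinate projection $\mathbb{L}^n\to\mathbb{L}$ (or, for $n=1$, the identity) is continuous with image all of $\mathbb{L}$. The sequential-compactness argument you offer for it breaks down precisely because $\omega_1$ has uncountable cofinality: unboundedness of the image cannot be witnessed by a \emph{sequence}, since every countable subset of the long ray is bounded, so you cannot ``pick $x_k$ with $h(x_k)\to\omega_1$'' in the first place. (Indeed, the continuous image of the countably compact space $\mathbb{L}^n$ is countably compact, but $[0,\omega_1)$ itself is countably compact and unbounded, so countable compactness of the image gives no boundedness.)

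The repair is the paper's argument, which bounds not the image of the whole space but the image of each \emph{Lindel\"of} piece: with $U_\alpha=(-\alpha,\alpha)^n$, the set $f(U_\alpha)$ is Lindel\"of, and a Lindel\"of subset of $\mathbb{L}^n$ \emph{is} bounded, so $f(U_\alpha)\subset U_{\beta(\alpha)}$ for some $\beta(\alpha)\ge\alpha$. One then iterates countably often, $\alpha_k=\beta(\alpha_{k-1})$, and takes $\alpha=\sup_k\alpha_k<\omega_1$; at this closure point $f(U_\alpha)\subset U_\alpha$, and by continuity $f(\overline{U_\alpha})\subset\overline{U_\alpha}=[-\alpha,\alpha]^n$. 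From there your conclusion is fine and agrees with the paper: $[-\alpha,\alpha]^n$ is a topological $n$-ball (each factor being a compact, second countable, connected LOTS with two endpoints, hence a copy of $[0,1]$), and Brouwer supplies the fixed point. So the Brouwer endgame is right; what is missing is the leapfrog-to-an-invariant-cube idea, which cannot be replaced by a global boundedness claim.
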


\begin{proof}
(Communicated by Mathieu Baillif.)
  Let $f\colon \mathbb{L}^n\to\mathbb{L}^n$ be continuous, and
  let $U_\alpha$ be $(-\alpha,\alpha)^n\subset\mathbb{L}^n$ for
  $\alpha\in\omega_1$. Since $U_\alpha$ is Lindel\"of, $f(U_\alpha)$
  is contained in $U_{\beta(\alpha)}$ for some
  $\beta(\alpha)\ge \alpha$. Set
  $\alpha_0 = 1$ and $\alpha_n=\beta(\alpha_{n-1})$ for $n\in\omega$.
  Let $\alpha$ be the supremum of the $\alpha_n$'s, then
  $f(U_\alpha)\subset U_\alpha$. By continuity,
  $f(\overline{U_{\alpha}})\subset \overline{f(U_\alpha)}
  \subset \overline{U_\alpha}$, where
  $\overline{U_\alpha}=[-\alpha,\alpha]^n$ is a topological
  ball, and we conclude via
  the Brouwer fixed point theorem.
\end{proof}

\begin{cor}\label{fppf-Ln}
The long hyperspace $\mathbb{L}^n$ has the fixed-point
property for flows for all integer $n\ge 0$.
\end{cor}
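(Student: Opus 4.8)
The plan is to combine the two preceding results in the obvious way. Corollary~\ref{fppf-Ln} asserts that $\mathbb{L}^n$ has the fixed-point property for flows, so by Definition (or rather by the content of Proposition~\ref{fpp-for-flows}) it suffices to verify the two hypotheses of that proposition for $M=\mathbb{L}^n$: namely that $\mathbb{L}^n$ is sequentially-compact and that it has the (ordinary) fixed-point property. The second is exactly Lemma~\ref{transfinite-Brouwer}, so nothing more is needed there. Hence the real work of the proof is the first point, sequential-compactness of $\mathbb{L}^n$.

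First I would recall that the long line $\mathbb{L}$ (equivalently the closed long ray $\mathbb{L}_{\ge 0}$) is sequentially compact: any sequence in $\mathbb{L}_{\ge 0}$ is contained in some initial segment $[0,\alpha]$ with $\alpha<\omega_1$, because a countable set of countable ordinals has a countable supremum; and $[0,\alpha]$ is a compact (indeed metrizable, order-complete) interval, so the sequence has a convergent subsequence there, whose limit lies in $\mathbb{L}$. The same argument applies verbatim to $\mathbb{L}$ itself (use a symmetric interval $[-\alpha,\alpha]$). Then I would pass to the product: a sequence $(x_k)$ in $\mathbb{L}^n$ has $n$ coordinate sequences; extract a convergent subsequence in the first coordinate, then a further subsequence convergent in the second, and so on through all $n$ coordinates (a finite diagonal argument). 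The resulting subsequence converges in $\mathbb{L}^n$ with the product topology. Thus $\mathbb{L}^n$ is sequentially compact.

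With sequential-compactness in hand and Lemma~\ref{transfinite-Brouwer} supplying the fixed-point property, Proposition~\ref{fpp-for-flows} applies directly and yields $\delta(\mathbb{L}^n)=1$, i.e.\ $\mathbb{L}^n$ has the fixed-point property for flows, which is precisely the assertion of Corollary~\ref{fppf-Ln}.

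I do not anticipate a genuine obstacle here: the statement is a clean corollary of the two immediately preceding results, and the only thing requiring a (short) argument is sequential-compactness of the finite power $\mathbb{L}^n$, which follows from the countable-cofinality trick for $\omega_1$ together with a finite iterated-subsequence extraction. The one point worth stating carefully is that a countable subset of $\omega_1$ is bounded, since that is what reduces everything to the compact metrizable intervals $[-\alpha,\alpha]^n$; everything else is routine.
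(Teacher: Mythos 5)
Your proposal is correct and is exactly the argument the paper intends: the corollary is stated immediately after Lemma~\ref{transfinite-Brouwer} precisely because it follows by feeding that lemma, together with the sequential-compactness of $\mathbb{L}^n$ (countable subsets of $\omega_1$ are bounded, so everything reduces to the compact cubes $[-\alpha,\alpha]^n$), into Proposition~\ref{fpp-for-flows}. Nothing is missing.
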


This method
has
a
wider flexibility, for it applies to
other (balls-exhaustible) manifolds
and
one  may also harpoon the fixed-point via Lefschetz, instead
of Brouwer (these ideas
are discussed in
Section~\ref{high-dim-hairy-ball-thms}).

\section{Dynamical consequences of Schoenflies}

We now address
certain
dynamical
repercussions of the ``Schoenflies axiom'' (any embedded
circle bounds a disc). In view of \cite{GaGa2010}, this
amounts to simple-connectivity. This section certainly
contains the most satisfactory results of the paper, inherited
from the
``tameness'' of $2$-dimensional---and the allied
plane---topology. This explains why the classical paradigms of
$2$-dimensional dynamics transpose so easily to the non-metric
realm (without having to control the growing mode of the
manifold). As we shall oft deplore later, nothing similar
seems to occur in higher-dimensions
(and to be perfectly honest, not even in $2$-dimensions when
the fundamental group is allowed to be non-trivial---recall
the plague of wild pipes).

\subsection{Poincar\'e-Bendixson theory
(dynamical consequence of Jordan)}

Our first objective is
Theorem~\ref{omega-bounded_Hairy-ball-thm} below, whose formal
proof requires some preparatory lemmata, all very standard.
For the
sake of
short-circuiting some logical deductions
by geometric intuition here is a quick justification of
(\ref{omega-bounded_Hairy-ball-thm}):
by $\omega$-boundedness the surface is ``long'' impeding a
``short'' semi-orbit to escape at infinity. Thus, any
trajectory
begins a spiraling motion,
creating asymptotically either a stationary point or
a periodic orbit. In the latter case, Schoenflies gives an
invariant bounding disc, where  a fixed point is created by
Brouwer. {\it q.e.d.}

The rest of this subsection
details the more pedestrian
route (thus skip it, if you like):

\begin{lemma}\label{Brouwer} A flow
on a Schoenflies surface with a periodic orbit has a
fixed-point. (Same conclusion if the periodic orbit is
null-homotopic.)
\end{lemma}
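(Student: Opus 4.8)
The plan is to exploit the fact that a periodic orbit on a Schoenflies surface bounds a disc, reducing the question to the classical Brouwer fixed-point theorem applied to the return map along that disc. Concretely, let $\gamma$ be the periodic orbit, of period $\tau>0$, viewed as an embedded circle in the surface $M$. By the Schoenflies axiom, $\gamma$ bounds a closed disc $D\subset M$ (in the null-homotopic case the same conclusion follows from \cite{GaGa2010}, since a null-homotopic embedded circle in a surface bounds a disc). The first thing I would check is that $D$ can be taken flow-invariant, or at least that the time-$\tau$ map $f_\tau$ maps $D$ into itself: since $f_\tau$ fixes $\gamma=\partial D$ pointwise and $f_\tau$ is a homeomorphism of $M$, it carries the two complementary components of $M-\gamma$ to themselves or swaps them; as it fixes the boundary, it must preserve each side, so $f_\tau(D)=D$ (here I would invoke invariance of domain / the fact that $f_\tau$ restricted to a neighbourhood of $\gamma$ is orientation-coherent on each side, or more simply that $f_\tau(D)$ is a disc with the same boundary $\gamma$ lying on the same side).

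Granting $f_\tau(D)=D$, the map $f_\tau|_D\colon D\to D$ is a self-homeomorphism of a topological $2$-disc, so by the Brouwer fixed-point theorem it has a fixed point $p\in D$. This $p$ is a periodic point of the flow with period dividing $\tau$. To upgrade ``periodic point of $f_\tau$'' to ``stationary point of the flow'' I would run the standard dyadic-cascade argument already used in the proof of Proposition~\ref{fpp-for-flows}: apply Brouwer to $f_{\tau/2^n}|_D$ for each $n$ (each such map also preserves $D$, since it commutes with $f_\tau$ and fixes $\gamma$ pointwise — the isotropy at points of $\gamma$ contains $\tau\mathbb Z$, so $f_{\tau/2^n}$ fixes $\gamma$ only if $\tau/2^n$ is itself a period, which need not hold). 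This last point is the subtlety: $f_{\tau/2^n}$ need not fix $\gamma$ pointwise. The clean fix is to not insist on $f_{\tau/2^n}(D)=D$ but instead to first extract the fixed point and then cascade: having found $p$ fixed by $f_\tau$, the orbit-closure $\overline{f(\mathbb R\times\{p\})}$ is a compact invariant set inside $D$ (it is contained in $D$ since $D$ is $f_\tau$-invariant and the orbit of $p$ is $\tau$-periodic, hence equals $\{f_t(p):0\le t\le\tau\}$), and on this compact invariant subset one argues directly: the fixed-point sets $K_n$ of $f_{\tau/2^n}$ inside $D$ are closed, and by applying Brouwer to each $f_{\tau/2^n}\colon D\to D$... — but again this needs $f_{\tau/2^n}(D)\subset D$.

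The cleanest route, which I would actually write, sidesteps the cascade: take the closed disc $D$ with $f_\tau(D)=D$, and consider the ``averaged'' or ``suspension'' picture — the restricted flow $f\colon\mathbb R\times D'\to D'$ where $D'=\bigcup_{t}f_t(D)$ is the flow-saturation of $D$, which is still bounded (it is $f_\tau$-invariant and equals $\bigcup_{0\le t\le\tau}f_t(D)$, a continuous image of the compact set $[0,\tau]\times D$, hence compact; moreover it is a compact surface-with-boundary, or at least a Peano continuum, invariant under the flow). Now $D'$ is a compact flow-invariant set on a surface; shrinking to $D$ again and using that $f_\tau|_D$ is isotopic to the identity through $t\mapsto f_t|_D\circ(\text{retraction})$... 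Rather than belabour this, the honest statement is: the main obstacle is the passage from a fixed point of the return map $f_\tau$ to a genuine stationary point of the flow, i.e. ensuring the dyadic times $f_{\tau/2^n}$ also preserve an invariant disc. I expect the paper resolves it exactly as in Proposition~\ref{fpp-for-flows}: once one disc $D$ with $f_\tau(D)=D$ is in hand, one notes $f_{\tau/2^n}$ commutes with $f_\tau$ hence permutes the (finitely or compactly many) components of the fixed set geometry, and a short argument — or simply replacing $D$ by the intersection $\bigcap_n f_{\tau/2^n}$-invariant sub-disc obtained via Brouwer at each stage applied to $f_{\tau/2^n}\circ(\text{deformation retraction of }f_{\tau/2^n}(D)\text{ onto }D)$ — produces a point fixed under all dyadic times, hence (by continuity of $t\mapsto f_t(x)$ and density of dyadic rationals) under the whole flow, which is the desired stationary point. $\hfill\square$
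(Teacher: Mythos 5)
Your overall skeleton is the right one (Schoenflies disc, Brouwer applied to the dyadic time maps, nested compact fixed-point sets, continuity), but the proposal as written does not close: the last paragraph is explicitly conjectural, and the several patches you sketch (the flow-saturation $D'$, commuting maps permuting components, ``replacing $D$ by the intersection\dots'') are never carried out. The difficulty you wrestle with --- whether $f_{\tau/2^n}$ preserves $D$ --- is a manufactured one, and the missing observation is the single line that makes the paper's proof work: \emph{the disc $D$ bounded by the periodic orbit $\gamma$ is invariant under the entire flow}, not merely under the return map $f_\tau$. Indeed $\gamma$ is itself an orbit, so every time map $f_t$ carries $\gamma$ onto $\gamma$ \emph{setwise} (pointwise fixing is irrelevant); since orbits partition the surface, no trajectory starting in the interior of $D$ (which is a component of $M-\gamma$) can cross $\gamma$, whence $f_t(D)=D$ for \emph{all} $t\in{\Bbb R}$. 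You only ever establish $f_\tau(D)=D$ via the pointwise fixing of $\gamma$ by $f_\tau$, which is why the dyadic times then look problematic to you.

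Once the full flow-invariance of $D$ is in hand, the argument is exactly the cascade you gesture at and the one the paper uses: each $f_{1/2^n}$ restricts to a self-homeomorphism of the compact disc $D$, so $K_n={\rm Fix}(f_{1/2^n}|_D)$ is non-empty by Brouwer, closed, and nested ($K_n\supset K_{n+1}$ by the group property); compactness of $D$ gives a point in $\bigcap_n K_n$, fixed at all dyadic times and hence, by continuity of $t\mapsto f_t(x)$, at all times. (Note the paper uses the absolute dyadic times $1/2^n$ rather than $\tau/2^n$; with $D$ invariant under the whole flow this choice is immaterial.) So: correct strategy, but a genuine gap at the invariance step, resolved by one remark about orbits not crossing orbits rather than by any of the machinery you propose.
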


\begin{proof} The periodic orbit is a topological circle,
so bounds a 2-disc, which is invariant under the flow.
Applying the Brouwer fixed point theorem to the dyadic times
maps $f_{t_n}$ of the flow ($t_n={1}/{2^n}$), we get a nested
sequence of non-empty fixed-point sets $K_n={\rm
Fix}(f_{t_n})$ whose common intersection $\bigcap_{n=
1}^{\infty} K_n$ is non-empty  (compactness of the disc). By
continuity, a point in this set
is fixed under the flow.
\end{proof}

Next, we seek for an intrinsic topological property forcing
the formation of periodic orbits.
A good condition is $\omega$-boundedness\footnote{Recall that
a space is {\it $\omega$-bounded} if each countable subset has
a compact closure.}: for given a flow $f$, the set ${\Bbb Q}
x=f({\Bbb Q} \times \{x\})$ is countable, hence its closure
$\overline{{\Bbb Q} x}$ is compact. Note that $\overline{{\Bbb
Q} x}=\overline{{\Bbb R} x}$ (as follows from the general
formula $f(\overline S)\subset \overline {f(S)}$ for a
continuous map $f$).
A periodic motion
is produced by the following standard mechanism
(of G.\,D. Birkhoff):

\begin{lemma}\label{Birkhoff} Let $f$ be a
flow on a (Hausdorff) space $X$. Assume that the orbit closure
$\overline{{\Bbb R}x}$ is compact and
the flow
\emph{proper} (i.e. each ``inherent'' open set of the form
$f(]s,t[\times \{z\})$ is also open for the relative topology
on ${\Bbb R}z$). Then
there is a compact orbit, which by properness is either a
fixed point or a periodic orbit.
\end{lemma}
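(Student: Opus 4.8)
I will prove Lemma~\ref{Birkhoff} by the classical Birkhoff recurrence/minimal-set argument, adapted to the present (possibly non-metric) setting. The key observation is that all the action takes place inside the compact invariant set $C:=\overline{{\Bbb R}x}$, so standard compactness techniques apply even though the ambient manifold need not be metric. First I would note that $C$ is non-empty, compact, and flow-invariant (invariance follows from $f(\overline S)\subset\overline{f(S)}$ applied to $S={\Bbb R}x$ together with the group property, which gives $f_t(C)\subset C$ for all $t$, hence equality). Then I would invoke Zorn's lemma on the family of non-empty closed flow-invariant subsets of $C$, ordered by reverse inclusion: the intersection of a chain of such sets is again non-empty (finite intersection property plus compactness of $C$), closed, and invariant, so there is a minimal non-empty closed invariant set $K\subset C$.

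Next I would pick any point $z\in K$ and consider its orbit closure $\overline{{\Bbb R}z}$, which is a non-empty closed invariant subset of $K$, hence equals $K$ by minimality; so the orbit of $z$ is dense in $K$. The goal is to show $K$ is a single orbit that is already closed, i.e. compact. Here is where properness of the flow enters: I would argue that for a proper flow, a minimal set is necessarily equal to one orbit. Indeed, suppose the orbit ${\Bbb R}z$ is \emph{not} closed in $X$; then ${\Bbb R}z\subsetneq K=\overline{{\Bbb R}z}$. I would like to derive a contradiction with minimality by producing a \emph{smaller} closed invariant set, but since ${\Bbb R}z$ itself need not be closed, the honest route is: either $K$ consists of a single orbit, or one shows that a minimal set of a proper flow on which some orbit is non-closed cannot exist. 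The cleanest argument uses properness directly: properness says the intrinsic topology of ${\Bbb R}z$ (as a quotient of ${\Bbb R}$) agrees with the subspace topology, so ${\Bbb R}z$ is locally homeomorphic to ${\Bbb R}$ in $X$; combined with $\overline{{\Bbb R}z}=K$ being compact, if ${\Bbb R}z$ were non-closed it would accumulate on itself, and the standard Poincar\'e-type transversal/tube argument (available inside the compact metric set $C$ via the Whitney--Bebutov flow-boxes, or more elementarily since properness excludes an orbit being simultaneously open and non-closed in its own closure) forces a return, producing a periodic orbit inside $K$; minimality then upgrades $K$ to that periodic orbit. Either way $K$ is a single compact orbit.

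Finally, a compact orbit $O=K$ is the continuous image $O=f({\Bbb R}\times\{z\})$ of ${\Bbb R}$; the stabiliser $\Gamma=\{t: f_t(z)=z\}$ is a closed subgroup of ${\Bbb R}$, so $\Gamma\in\{{\Bbb R},\, c{\Bbb Z}\text{ for some }c>0,\,\{0\}\}$. The case $\Gamma=\{0\}$ is excluded: then ${\Bbb R}\to O$ would be a continuous bijection from ${\Bbb R}$ onto a compact Hausdorff space, impossible. So $\Gamma={\Bbb R}$, giving a fixed point, or $\Gamma=c{\Bbb Z}$, giving a genuine periodic orbit of period $c$. That is the asserted dichotomy, and it completes the proof.

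\textbf{Main obstacle.} The delicate point is justifying that \emph{properness} of the flow forces a minimal set inside a compact invariant set to be a single closed orbit — i.e. ruling out a ``thick'' minimal set with only non-closed orbits (as happens for irrational flows on the torus). This is exactly where ``proper'' is doing real work: it makes every orbit intrinsically a line or a circle embedded in $X$, and in dimension/setting where flow-boxes are available (here the relevant piece $C$ is metric, so Whitney--Bebutov applies), a non-closed orbit accumulating on itself yields a first-return map on a transversal and hence a periodic orbit, contradicting minimality unless the orbit was periodic to begin with. I would spell this out using the local cross-section through an accumulation point and the fact that the orbit, being intrinsically $\cong{\Bbb R}$, must hit the transversal in a monotone sequence converging to a point of the same orbit-closure, forcing periodicity. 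Everything else (Zorn, closed-subgroup trichotomy, no continuous bijection ${\Bbb R}\twoheadrightarrow$ compact Hausdorff) is routine.
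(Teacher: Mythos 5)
Your skeleton coincides with the paper's: Zorn's lemma applied to the non-empty closed invariant subsets of the compactum $\overline{{\Bbb R}x}$ produces a minimal set $K$, and the whole burden is to show $K$ is a single closed orbit. But at exactly that point --- which you yourself flag as the main obstacle --- there is a gap. Your primary route (a transversal and a first-return map via Whitney--Bebutov flow-boxes in ``the compact metric set $C$'') is not available in the stated generality: $X$ is merely Hausdorff, and $\overline{{\Bbb R}x}$, though compact, need not be metrizable (the closure of a separable set in a compact Hausdorff space can be as bad as $\beta{\Bbb N}$). Moreover, even where a transversal exists, a first-return map does not ``produce a periodic orbit'' --- the irrational flow on the torus returns forever without closing up --- so as written the argument never isolates what properness contributes. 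Your parenthetical alternative is the right idea but is misstated: properness does \emph{not} exclude an orbit being open and non-closed in its own closure (a trajectory spiralling onto a limit cycle is proper, is open in its closure, and is non-closed). What properness actually yields --- and this is the paper's Lemma~\ref{properness}, which itself requires a short argument using first countability, Hausdorffness and the compactness of the finite-time arcs $f([-\varepsilon,\varepsilon]\times\{z\})$ --- is that a proper orbit is \emph{open} in its closure; consequently $\overline{{\Bbb R}y}\setminus{\Bbb R}y$ is closed and invariant, hence empty by minimality of $K$, hence $K={\Bbb R}y$ is a single compact orbit. No transversal and no return map are needed.

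A second, smaller error: a continuous bijection from ${\Bbb R}$ onto a compact Hausdorff space is \emph{not} impossible (send $(-\infty,0]$ once around one loop of a figure-eight and $[0,\infty)$ once around the other, meeting at the node), so your exclusion of the case $\Gamma=\{0\}$ is unjustified as written. Again properness rescues it: for an injective proper orbit the openness of arcs in the subspace topology makes the orbit map ${\Bbb R}\to O$ a homeomorphism onto its image, so $O$ cannot be compact. With that repair, your closed-subgroup trichotomy correctly yields the fixed-point/periodic-orbit dichotomy and is a legitimate substitute for the paper's closing remark that proper orbits are connected Lindel\"of manifolds of dimension $\le 1$.
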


\begin{proof}  Consider the set $\Sigma$ of all
non-empty closed $f$-invariant subsets of the compactum
$\overline{{\Bbb R}x}$. Order it by inclusion. Inductiveness
follows
from the non-emptiness of a nested intersection of closed sets
in a compactum. By Zorn's lemma, there is $K$ a minimal set in
$\Sigma$.

We show that $K$ reduces to a single {\it orbit}. Indeed let
$y\in K$, then ${\Bbb R}y\subset \overline{{\Bbb R}y}\subset
K$, and the last inclusion is an equality by minimality of
$K$. So it is enough to check that ${\Bbb R}y$ is closed. If
not then $\overline{{\Bbb R}y}-{\Bbb R}y$ is non-empty,
invariant and closed (by
Lemma~\ref{properness}
below), contradicting minimality.
It remains to check:
\begin{lemma}\label{properness} Let $f$
be a flow on a (first countable and Hausdorff) space $X$. If
the orbit ${\Bbb R}x$ is proper, then the set $\overline{{\Bbb
R}x}-{\Bbb R}x$ is closed (in $X$).
\end{lemma}

\begin{proof} It is enough to show that ${\Bbb R}x$
is open in $\overline{{\Bbb R}x}$. Let $z\in {\Bbb R}x$.
Choose any $\varepsilon>0$, then
$f(]-\varepsilon,\varepsilon[\times\{z\})$ is open for the
inherent topology on ${\Bbb R}x$, so by properness there is
$U$ open in $X$ such that $U \cap {\Bbb
R}x=f(]-\varepsilon,\varepsilon[\times\{z\})$. Then $U \cap
\overline{{\Bbb R}x}$ is open in $\overline{{\Bbb R}x}$,
contains $z$ and satisfies $U \cap \overline{{\Bbb R}x}\subset
{\Bbb R}x$.

Indeed let $y\in U \cap \overline{{\Bbb R}x}$. Since $X$ is
first countable, we choose an approximating sequence
$y_n\in{\Bbb R}x$ converging to $y$. Since $U$ is open, there
is an integer $N$ such that $y_n\in U$ for all $n\ge N$. So
$y_n \in U \cap {\Bbb R}x$, hence $y_n=f(t_n, z)$ with $\vert
t_n \vert < \varepsilon$. But since
$f([-\varepsilon,\varepsilon]\times \{z\})=:F$ is compact
(hence closed in $X$ Hausdorff), we have that $y\in F$. Since
$F \subset {\Bbb R}z={\Bbb R}x$, this completes the proof.
\end{proof}

\noindent Summing up, $K$ is a compact orbit, and properness implies that orbits are
connected Lindel\"of $n$-manifolds with $n\le 1$, so that $K$
is either a point or a circle.
\end{proof}

For surfaces,  properness is ensured by the {\it dichotomy} of
the underlying surface (i.e., Jordan separation by circles
holds true):

\begin{lemma}\label{Bendixson} Every flow on a
dichotomic surface is proper.
\end{lemma}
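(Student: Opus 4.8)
The plan is to transport the classical Poincar\'e--Bendixson argument --- that consecutive crossings of a transversal by an orbit occur monotonically --- to the present setting, with \emph{dichotomy} playing the role of the Jordan curve theorem and the chart-orbit trick supplying the local flow-box theory.

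First I would dispose of the degenerate orbits. If $z$ is a fixed point then ${\Bbb R}z=\{z\}$ and the only ``inherent'' open set is $\{z\}$, so properness is automatic. If the orbit through $z$ is periodic, the orbit map factors through a continuous bijection ${\Bbb R}/T{\Bbb Z}\to M$ from a compactum into a Hausdorff space, hence a homeomorphism onto its image; on such an embedded circle the inherent topology coincides with the subspace one, so the orbit is proper. So I may assume the orbit through $z$ is an injective immersion of the line. Suppose, for contradiction, it is not proper. Then, after translating the time parameter, there are $\epsilon>0$ and times $t_n$ with $|t_n|\ge\epsilon$ and $f(t_n,z)\to z$; replacing $f$ by the time-reversed flow (which has the same orbits, hence the same properness status) if necessary, I may take $t_n\ge\epsilon$.

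Next I would build a local cross-section. Choosing a Euclidean chart $U\ni z$, the chart-orbit $f({\Bbb R}\times U)$ is open in $M$ and Lindel\"of, hence metrisable, so the classical Whitney--Bebutov flow-box theory (as in the proof of Theorem~\ref{folklore}) supplies a tame arc $S\ni z$ transverse to the flow together with a flow box $V\cong S\times(-1,1)$ about $z$. Since $z$ --- and hence every point of ${\Bbb R}z$ --- is non-singular, each passage of the orbit through $S$ is transversal and isolated in time; let $0=s_0<s_1<s_2<\cdots$ be the successive positive times at which the forward orbit meets $S$ and $q_i=f(s_i,z)\in S$ the crossing points. There are infinitely many of them: for large $n$, $f(t_n,z)\in V$, so flowing for a small time $\delta_n\to0$ lands on $S$ at $\tilde q_n=f(\delta_n,f(t_n,z))$, with $\tilde q_n\to z$ and crossing time $t_n+\delta_n>0$, whence $\tilde q_n=q_{i(n)}$ for some $i(n)\ge1$. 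Also $q_1\ne q_0$, since $f(s_1,z)=z$ with $s_1>0$ would force periodicity.

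The heart --- and the main obstacle --- is the monotonicity step. For each $i$ the orbit arc $A_i=f([s_i,s_{i+1}]\times\{z\})$ together with the sub-arc $S_i\subset S$ joining $q_{i+1}$ back to $q_i$ is a simple closed curve $\gamma_i$ in $M$ (the orbit is injective and does not meet $S$ strictly between times $s_i$ and $s_{i+1}$). By dichotomy, $\gamma_i$ separates $M$. The classical flow-box bookkeeping then applies verbatim: the forward orbit after time $s_{i+1}$ cannot cross $A_i$ (injectivity of the orbit) and it leaves the flow box at $q_{i+1}$ on the side of $S$ opposite to $S_i$, so it is trapped in one complementary component of $\gamma_i$; consequently the three points $q_i,q_{i+1},q_{i+2}$ occur in this order along the arc $S$ (or all three in the reverse order). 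Iterating, $(q_i)_{i\ge0}$ is strictly monotone along $S$, hence converges to some $q_\infty\in\overline S$ with $q_\infty\ne q_0=z$. But $(i(n))$ cannot have a bounded subsequence (along such a subsequence the $q_{i(n)}$ would take finitely many values, none equal to $z$, yet would converge to $z$), so $i(n)\to\infty$ and $\tilde q_n=q_{i(n)}\to q_\infty\ne z$, contradicting $\tilde q_n\to z$. Hence the flow is proper. The delicate point to write out carefully is exactly this bookkeeping --- that $S$ meets $\gamma_i$ only at $q_i$ and $q_{i+1}$, passing there from one complementary region to the other, and that the forward orbit stays on one side --- i.e.\ the standard Poincar\'e--Bendixson lemma, which is legitimate here because dichotomy furnishes the separation of $M$ while the flow-box data live inside the metrisable chart-orbit.
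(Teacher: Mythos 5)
Your proof is correct and follows the same route as the paper's: the chart-orbit trick to recover metrisability, a Bebutov flow box and cross-section, and the Bendixson sack argument (with dichotomy standing in for Jordan separation) to rule out returns to the section arbitrarily close to the base point. You merely spell out the monotonicity-of-returns bookkeeping that the paper leaves implicit.
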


\begin{proof} (The classical
Bendixson
sack argument.) Given an inherent open set
$I_{\varepsilon}:=f(]-\varepsilon,\varepsilon[\times \{ x\})$
we seek an open set $U$
such that $U\cap {\Bbb
R}x=I_{\varepsilon}$. If the point $x$ is
stationary,
$U$ is easy-to-find. If non-stationary the theory of
flow-boxes is available.
In this theory, metrisability plays a crucial r\^ole, which
turns out however to be subsidiary; for letting flow any chart
domain $V$ about $x$ yields the set $f({\Bbb R}\times {V})$
which is Lindel\"of,
hence metric. According to Bebutov (cf. Nemytskii-Stepanov
\cite[p.\,333--335]{Nemytskii-Stepanov}) a flow-box can be
found for any preassigned time length, shrinking eventually
the cross-section $\Sigma_x\ni x$.
 Inside the flow-box $B:=f(  [-\varepsilon, \varepsilon] \times
\Sigma_x)$ a certain sub-rectangle $R$ is singled out by the
first returns of $x$ to the section $\Sigma_x$ (both forwardly
and backwardly in time). Dichotomy ensures via a Bendixson
sack argument the absence of further ``recurrences'', i.e.
subsequent returns intercepting the section $\Sigma_x$ closer
to $x$. The existence of the desired $U$ is guaranteed (the
interior of $R$ is appropriate).
\end{proof}

\subsection{$\omega$-bounded hairy ball theorem}

Assembling the previous facts, we
obtain:

\begin{theorem} \label{omega-bounded_Hairy-ball-thm}
Any flow on a $\omega$-bounded
Schoenflies surface
(equivalently simply-connected)  has a fixed point.
\end{theorem}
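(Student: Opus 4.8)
The plan is to run the Poincar\'e-Bendixson machinery assembled in the preceding lemmata. Suppose $f\colon\mathbb{R}\times M\to M$ is a flow on an $\omega$-bounded Schoenflies surface $M$; we seek a stationary point. If $f$ already has a stationary point we are done, so assume $f$ is a brush and aim for a contradiction (or, more directly, produce a periodic orbit and invoke Lemma~\ref{Brouwer}). The first step is to fix any point $x\in M$ and consider the orbit closure $\overline{\mathbb{R}x}$. By $\omega$-boundedness the countable set $\mathbb{Q}x=f(\mathbb{Q}\times\{x\})$ has compact closure, and since $\overline{\mathbb{Q}x}=\overline{\mathbb{R}x}$ (using $f(\overline{S})\subset\overline{f(S)}$ for continuous $f$), the orbit closure $\overline{\mathbb{R}x}$ is compact.

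The second step is to invoke properness. Since a Schoenflies surface is in particular dichotomic (any embedded circle separates: indeed it bounds a disc), Lemma~\ref{Bendixson} tells us every flow on $M$ is proper. The third step feeds this into the Birkhoff minimal-set argument: Lemma~\ref{Birkhoff} applies to the compact invariant set $\overline{\mathbb{R}x}$ with the proper flow $f$, yielding a compact orbit $K$ which, by properness, is either a fixed point or a periodic orbit. If $K$ is a fixed point we are done. If $K$ is a periodic orbit, it is an embedded circle, so by the Schoenflies property it bounds a $2$-disc $D\subset M$; this disc is flow-invariant (a circle orbit's complementary components are permuted by each $f_t$, and by continuity in $t$ and connectedness of $\mathbb{R}$ the bounded piece $D$ is preserved). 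Applying the Brouwer fixed point theorem to the dyadic time-maps $f_{1/2^n}$ on the compact disc $D$ and taking the nested intersection of their (closed, non-empty) fixed-point sets produces a point fixed under all dyadic times, hence under the whole flow — this is exactly the argument already spelled out in Lemma~\ref{Brouwer}. Finally, the parenthetical equivalence ``$\omega$-bounded Schoenflies surface $\equiv$ simply-connected $\omega$-bounded surface'' is supplied by \cite{GaGa2010}, as indicated in the text.

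The only genuinely delicate point is the invariance of the disc $D$ bounded by the periodic orbit $K$: one must check that each $f_t$ carries $D$ to $D$ rather than swapping $D$ with the other complementary region. I would argue this by noting that $f_t(K)=K$ for all $t$, so $f_t$ permutes the (at most two, by dichotomy) components of $M\setminus K$; the map $t\mapsto f_t$ is continuous and $\mathbb{R}$ is connected, and $f_0=\mathrm{id}$ fixes $D$ setwise, so $f_t(D)=D$ throughout — and then $f_t(\overline{D})=\overline{D}$ since $\overline{D}=D\cup K$. Everything else is bookkeeping among already-proven lemmata, so I do not expect further obstacles; the proof is essentially a one-line assembly, which is presumably why the authors preface it with the informal ``spiraling motion'' heuristic.
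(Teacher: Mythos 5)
Your proof is correct and follows exactly the paper's own route: Schoenflies $\Rightarrow$ dichotomic $\Rightarrow$ proper (Lemma~\ref{Bendixson}), $\omega$-boundedness $\Rightarrow$ compact orbit closure, Lemma~\ref{Birkhoff} yields a fixed point or periodic orbit, and Lemma~\ref{Brouwer} finishes the periodic case. Your extra care about the flow-invariance of the bounding disc is a welcome filling-in of a detail the paper leaves implicit inside Lemma~\ref{Brouwer}, but it does not change the argument.
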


\begin{proof} Since
simply-connected implies dichotomic~\cite{GaGa2010}, the flow
is proper
by (\ref{Bendixson}). Take any point $x\in M$, its orbit
closure $\overline{{\Bbb Q}x}=\overline{{\Bbb R}x}$ is
compact, by $\omega$-boundedness. By
(\ref{Birkhoff}), the flow has either a fixed point or a
periodic orbit. In the latter case one applies
(\ref{Brouwer}).
\end{proof}

This may be regarded as a non-metric
avatar of the ``hairy ball theorem'' (Poincar\'e, Dyck,
Brouwer): the \hbox{2-sphere} cannot be foliated nor brushed.
The
theorem applies for instance to the long plane ${\Bbb L}^2$
(which case also follows from the classification of foliations
on ${\Bbb L}^2$ given in \cite{BGG}). It also applies to any
space obtained from a Nyikos long pipe, \cite{Nyikos84}, by
capping off the short end by a 2-disc, for example the {\it
long glass}, i.e. the semi-long cylinder ${\Bbb S}^1\times
\mbox{(closed long ray)}$ capped off by a 2-disc.
The proof just given also shows:

\begin{cor}
Any flow on a $\omega$-bounded dichotomic surface has either a
fixed point or a periodic orbit.
\end{cor}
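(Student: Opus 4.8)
The plan is to re-run the proof of Theorem~\ref{omega-bounded_Hairy-ball-thm}, stopping one step short. The point is that simple-connectivity (equivalently the Schoenflies axiom) entered that argument only at the very end, to promote a periodic orbit to a fixed point via Lemma~\ref{Brouwer}; everything preceding that step used nothing beyond dichotomy together with $\omega$-boundedness, which are exactly the hypotheses now available.

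Concretely: first I would invoke the Bendixson sack argument, Lemma~\ref{Bendixson}, to conclude that any flow $f$ on the given dichotomic surface $M$ is proper. Next, choose an arbitrary point $x\in M$; by $\omega$-boundedness the countable set ${\Bbb Q}x=f({\Bbb Q}\times\{x\})$ has compact closure, and $\overline{{\Bbb Q}x}=\overline{{\Bbb R}x}$ since $f(\overline{S})\subset\overline{f(S)}$ for a continuous map, so the orbit closure of $x$ is compact. Then Lemma~\ref{Birkhoff} applies without change: inside the compactum $\overline{{\Bbb R}x}$ a Zorn argument furnishes a minimal non-empty closed invariant set, which reduces to a single orbit, and properness (via Lemma~\ref{properness}) forces that orbit to be a point or a circle, i.e.\ a fixed point or a periodic orbit. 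That is precisely the claimed alternative, and the argument terminates here --- no appeal to Brouwer or Schoenflies is made.

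I expect no genuine obstacle. The only thing to verify is that one is entitled to halt at the ``fixed point or periodic orbit'' dichotomy rather than collapsing it to a fixed point, and this is immediate because the surplus hypothesis of Theorem~\ref{omega-bounded_Hairy-ball-thm} was used nowhere else in its proof. The one routine check is that the cited Lemmata~\ref{Bendixson}, \ref{Birkhoff}, \ref{properness} are stated for first countable Hausdorff spaces --- satisfied here, surfaces being manifolds --- so all of them apply verbatim in the present setting.
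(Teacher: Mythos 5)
Your proposal is correct and is precisely the paper's own argument: the corollary is stated there with the remark ``the proof just given also shows,'' meaning one runs Lemma~\ref{Bendixson} (dichotomy gives properness) and Lemma~\ref{Birkhoff} ($\omega$-boundedness gives a compact orbit closure, hence a fixed point or periodic orbit) and simply omits the final Schoenflies--Brouwer step. Nothing further is needed.
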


This applies to any surface
deduced from the $2$-sphere by insertion of a finite number of
long pipes. (For instance any brush on the long cylinder
${\Bbb S}^1 \times {\Bbb L}$ has a periodic motion.)

\subsection{Fragmentary high-dimensional
hairy ball theorems via
Lefschetz}\label{high-dim-hairy-ball-thms}

It seems
natural to wonder if
Theorem~\ref{omega-bounded_Hairy-ball-thm} generalises to
dimension $3$ (and higher).
A basic
corruption is the (compact) 3-sphere ${\Bbb S}^3$ brushed
via a (Clifford-)Hopf fibering (${\Bbb S}^1$-action
arising from the ambient complex coordinates).
By the Poincar\'e conjecture this is the only compact
counterexample.
The following speculates
this to be the only failure in general:

\begin{conj}\label{3D-hairy-ball-thm} Any simply-connected
$\omega$-bounded non-metric $3$-manifold has the fpp for
flows.
\end{conj}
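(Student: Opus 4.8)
The plan is to run the same two-stage machine that produced Theorem~\ref{omega-bounded_Hairy-ball-thm} (and, for ${\Bbb L}^n$, Corollary~\ref{fppf-Ln}): first reduce the existence of a flow fixed point to a purely topological fixed-point property, then establish that property by Lefschetz theory. So let $M$ be such a $3$-manifold (taken boundaryless, to fix ideas) and $f\colon{\Bbb R}\times M\to M$ a flow. Because manifolds are first countable, $\omega$-boundedness implies $M$ is sequentially compact (any sequence has countable, hence compact and metrisable, closure). By the dyadic cascadisation of Proposition~\ref{fpp-for-flows} it then suffices to prove that $M$ has the fixed-point property for homeomorphisms isotopic to the identity; the time-$t$ maps $f_t$ being of that kind, the flow inherits a fixed point.

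First one records that $M$ cannot be compact: a compact Hausdorff manifold is metrisable, and a simply-connected one would be ${\Bbb S}^3$ by Perelman's proof of the Poincar\'e conjecture---but ${\Bbb S}^3$ carries the fixed-point-free Hopf flow, so ``non-metric'' is exactly the hypothesis that expels this counterexample. Being non-compact, connected and $3$-dimensional, $M$ has $H_3(M;{\Bbb Z})=0$, while $\pi_1(M)=0$ gives $H_1(M;{\Bbb Z})=0$, and $H_i(M;{\Bbb Z})=0$ for $i\ge 4$. Granting the optimistic finiteness conjecture of the introduction---item~(1), that the singular homology of an $\omega$-bounded manifold is finitely generated---the Euler characteristic is well-defined and $\chi(M)=1+\mathrm{rank}\,H_2(M)\ge 1$; note how non-compactness, by killing $H_3$, is precisely what forces $\chi\neq 0$, in contrast with $\chi({\Bbb S}^3)=0$ which is why the Hopf flow survives there. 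Consequently $L(h)=\chi(M)>0$ for every $h\simeq\mathrm{id}_M$, and invoking a Lefschetz fixed-point theorem in this setting---item~(2) of the introduction, conjectural in general since a non-metric manifold is not an ANR in the classical sense, but unconditional via Proposition~\ref{Lefschetz-fppf} when $M$ admits a canonical exhaustion by compact bordered $3$-manifolds (a suitable compact level being mapped into itself, where the classical theorem applies, as in the proof of~\ref{transfinite-Brouwer})---yields a fixed point of $h$, hence, by the first paragraph, of $f$.

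Thus the argument closes \emph{modulo} the two foundational conjectures (1) and (2), and that is where all the difficulty sits; I expect it to be the genuine obstacle. Conjecture~(1) is open already for $\omega$-bounded $3$-manifolds: there is no known three-dimensional analogue of Nyikos's bagpipe theorem presenting $M$ as a compact simply-connected ``bag'' with finitely many ``long pipes'' attached, and the wild-pipe pathology flagged in the introduction may itself obstruct such a structure theorem---yet such a theorem is the natural single target, since it would furnish at one stroke both the finite generation of the homology and the canonical exhaustion that bypasses~(2). The $2$-dimensional Poincar\'e--Bendixson route offers no shortcut in dimension $3$: a minimal set of a $3$-dimensional flow may be an irrationally flowed $2$-torus, on which every time-$t$ map is fixed-point-free, so there is no Bendixson trap; and simple connectivity no longer produces a Schoenflies disc on which to run Brouwer. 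Hence, short of a workable $\omega$-bounded Lefschetz theory (equivalently, a serviceable $3$-dimensional bagpipe structure theory taming the barbarian pipes), the statement must remain---as marked---a conjecture.
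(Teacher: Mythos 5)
This statement is left as a conjecture in the paper, which offers no proof but only partial evidence; your conditional argument---dyadic cascadisation to reduce to maps homotopic to the identity, exclusion of the compact case via the Poincar\'e conjecture, $\chi(M)=1+\mathrm{rank}\,H_2(M)\ge 1$, and Lefschetz applied through a canonical exhaustion (Proposition~\ref{Lefschetz-fppf})---is exactly the evidence the paper assembles, down to identifying wild pipes and the missing $\omega$-bounded Lefschetz theory as the genuine obstruction. You have correctly reconstructed the paper's route and correctly concluded that the statement remains open.
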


The
same
in dimension $n\ge 4$
is foiled: consider ${\Bbb S}^3 \times {\Bbb L}^{n-3}$ brushed
along the first factor by the Hopf fibering.
Let us try to collect some experimental evidence towards
(\ref{3D-hairy-ball-thm}). First,
the assertion
holds for ${\Bbb L}^3$ (Corollary~\ref{fppf-Ln}). By
the
proof of Lemma~\ref{transfinite-Brouwer}, the conjecture holds
more generally if the $3$-manifold admits an
$\omega_1$-exhaustion by (compact) $3$-balls (at least if the
interior exhaustion is {\it canonical}---in the sense of
Nyikos discussed below). However, this
 fails to cover the general case, as
${\Bbb S}^2\times {\Bbb L}$ lacks
a ball-exhaustion having a non-trivial second homotopy group
$\pi_2$.
To handle this situation we use Lefschetz
in place of Brouwer's fixed point theorem, to obtain:

\begin{prop}\label{Lefschetz-fppf} Let $M$
be an $\omega$-bounded
$n$-manifold. Assume the
existence of a canonical exhaustion
$M=\bigcup_{\alpha<\omega_1} W_\alpha$ by compact
bordered\footnote{Recall that, {\it bordered manifold} is
understood as a synonym of manifold-with-boundary. Moreover
``canonical'' refers to the hypothesis that the
``interiorised'' exhaustion $M=\bigcup_{\alpha<\omega_1} {\rm
int} (W_\alpha)$ by the interiors of the $W_{\alpha}$'s
satisfies the continuity axiom of
Lemma~\ref{exhaustion-preservation} right below.}
$n$-submanifolds $W_\alpha$ with non-vanishing
Euler characteristic. Then any flow on $M$ has a fixed point.
\end{prop}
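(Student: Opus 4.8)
The plan is to combine the ``dyadic cascadisation'' trick of Proposition~\ref{fpp-for-flows} with the Lefschetz fixed point theorem, applied not to $M$ but to a single \emph{compact invariant} member of the exhaustion. The crucial first step is to manufacture such a piece. Given the flow $f$ and any $\alpha<\omega_1$, the product $\mathbb{R}\times W_\alpha$ is Lindel\"of (a $\sigma$-compact space times a compactum), so $f(\mathbb{R}\times W_\alpha)$ is Lindel\"of; since the open sets $\operatorname{int}W_\gamma$ increase to cover $M$, countably many of them already cover this image, and letting $\beta(\alpha)<\omega_1$ be the supremum of the indices involved we obtain $f(\mathbb{R}\times W_\alpha)\subseteq \operatorname{int}W_{\beta(\alpha)}$. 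Now set $\alpha_0=0$, $\alpha_{n+1}=\beta(\alpha_n)$ and $\alpha^\ast=\sup_{n}\alpha_n$. Then $f(\mathbb{R}\times W_{\alpha_n})\subseteq \operatorname{int}W_{\alpha_{n+1}}\subseteq W_{\alpha^\ast}$ for every $n$, and the continuity (``canonical'') axiom of Lemma~\ref{exhaustion-preservation} at the limit ordinal $\alpha^\ast$ gives $\operatorname{int}W_{\alpha^\ast}=\bigcup_{n}\operatorname{int}W_{\alpha_n}\subseteq\bigcup_{n}W_{\alpha_n}$. Hence $f(\mathbb{R}\times\operatorname{int}W_{\alpha^\ast})\subseteq W_{\alpha^\ast}$, and passing to closures (with $W_{\alpha^\ast}=\overline{\operatorname{int}W_{\alpha^\ast}}$, as it is a bordered manifold) and using continuity of $f$ we conclude $f(\mathbb{R}\times W_{\alpha^\ast})\subseteq W_{\alpha^\ast}$. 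Thus $W:=W_{\alpha^\ast}$ is a compact bordered $n$-manifold, \emph{totally} invariant under the flow (we used $\mathbb{R}$, not merely $\mathbb{R}_{\ge0}$), with $\chi(W)\neq0$.

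On this compactum $W$ one now runs Lefschetz. For each $t$ the restriction $f_t|_W$ is a homeomorphism of $W$ onto itself, and it is homotopic to $\operatorname{id}_W$ through $(s,x)\mapsto f(st,x)$, which takes values in $W$ by invariance. Since the Lefschetz number is a homotopy invariant and $W$, being a compact bordered manifold, is a compact ANR, the Lefschetz fixed point theorem (Lefschetz 1937 \cite{Lefschetz_1937}) applies and yields a fixed point of $f_t|_W$ whenever $L(f_t|_W)=L(\operatorname{id}_W)=\chi(W)\neq0$; hence $f_t$ has a fixed point in $W$ for \emph{every} $t\in\mathbb{R}$.

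Finally, put $t_n=1/2^{n}$ and $K_n=\{x\in W: f_{t_n}(x)=x\}$. Each $K_n$ is closed in the compactum $W$, non-empty by the previous paragraph, and the group law ($f_{t_n}=f_{t_{n+1}}\circ f_{t_{n+1}}$) gives $K_{n+1}\subseteq K_n$. By compactness $\bigcap_{n}K_n\neq\emptyset$; any $x$ in this intersection is fixed by every $f_{m/2^{n}}$, hence by $f_t$ for all real $t$, by density of the dyadic rationals and continuity of $t\mapsto f_t(x)$. Such $x$ is a fixed point of the flow.

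The routine parts here are the Lefschetz computation and the cascade; the main obstacle is exactly the first step, namely trapping the whole flow inside a \emph{compact} bordered submanifold $W$ of the exhaustion. This is where the ``canonical''/limit-continuity hypothesis is indispensable: without control on how the $W_\alpha$ behave at limit stages, the iterated capture $\alpha_0\mapsto\alpha_1\mapsto\cdots$ need not stabilise into an invariant member, and the Lefschetz machine would have nothing compact to bite on.
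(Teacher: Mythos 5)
Your proof is correct, and it uses the same three ingredients as the paper's: the ``capture--iterate--pass to the limit'' mechanism of Lemma~\ref{exhaustion-preservation}, the Lefschetz fixed point theorem for compact metric ANR's, and the dyadic cascadisation of Proposition~\ref{fpp-for-flows}. The organisation differs in one respect worth noting. The paper cascadises \emph{first}, reducing the problem to the fixed point property of $M$ for a single map homotopic to the identity, and then invokes Lemma~\ref{exhaustion-preservation}(ii) (with its countable intersection of clubs over the dyadic times of the homotopy) to trap that map in some $W_\alpha$; the final nested-intersection step then takes place in $M$ and leans on sequential compactness of $M$. You instead exploit the Lindel\"ofness of $\mathbb{R}\times W_\alpha$ to trap the \emph{entire flow} in a single compact bordered $W_{\alpha^\ast}$ in one stroke, and only then run Lefschetz and the cascade, so that every subsequent step --- including the nested intersection $\bigcap_n K_n$ --- happens inside a genuine compactum. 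This buys a marginally cleaner argument: you bypass the club-intersection of Lemma~\ref{exhaustion-preservation}(ii), you get the stronger intermediate conclusion that the exhaustion contains a fully flow-invariant member, and you make explicit the paper's parenthetical remark that $\omega$-boundedness of $M$ is not really needed beyond what the canonical exhaustion already provides. The small points you rely on (that $W_{\alpha^\ast}=\overline{\operatorname{int}W_{\alpha^\ast}}$ for a compact bordered submanifold, and that the cofinal sequence $\alpha_n\uparrow\alpha^\ast$ together with the continuity axiom yields $\operatorname{int}W_{\alpha^\ast}=\bigcup_n\operatorname{int}W_{\alpha_n}$) are both sound.
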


\begin{proof}
By the cascadisation argument in the proof of
(\ref{fpp-for-flows}), it is enough to show that $M$ has the
fpp for a map $f$ homotopic to the identity (in fact,
sequential compactness follows from the exhaustion assumption,
so that we could relax ``$\omega$-boundedness'').
Lemma~\ref{exhaustion-preservation} below produces some big
$\alpha\in \omega_1$ so that $f(W_{\alpha})\subset W_\alpha$,
and with corresponding restriction $f_{\alpha}=f\vert
W_\alpha\colon W_{\alpha}\to W_{\alpha}$ homotopic to the
identity. The hypothesis $\chi(W_\alpha)\neq 0$ implies via
the Lefschetz fixed point theorem (for compact metric ANR's
\cite{Lefschetz_1937}) that $f_{\alpha}$ has a fixed point.
\end{proof}

\begin{lemma}\label{exhaustion-preservation} Let
$M=\bigcup_{\alpha<\omega_1} M_\alpha$ be a
space with an $\omega_1$-exhaustion by Lindel\"of open subsets
$M_\alpha$ verifying the continuity axiom
$M_{\lambda}=\bigcup_{\alpha<\lambda} M_{\alpha}$ whenever
$\lambda$ is a limit ordinal.

{\rm (i)} Given a continuous map $f\colon M\to M$ there is a
club\footnote{Closed unbounded subset of $\omega_1$.} $C$ of
indices $\alpha$ such that $f(M_\alpha)\subset M_\alpha$.

{\rm (ii)} Moreover if $f$ is homotopic to the identity of
$M$, then there is some $\alpha\in \omega_1$ so that the
restriction $f\colon \overline{M_\alpha} \to
\overline{M_\alpha}$ is homotopic to the identity.
\end{lemma}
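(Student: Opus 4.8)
For part (i), the plan is to run the standard closing-off argument. Fix a countable base around each $M_\alpha$: since $M_\alpha$ is Lindel\"of, it is second countable, so its closure-image under $f$ can be covered by countably many of the $M_\beta$'s; because the $M_\beta$ form a chain exhausting $M$, in fact $f(M_\alpha)$ lies inside a single $M_{g(\alpha)}$ for some $g(\alpha)\ge\alpha$ (replace the countable set of indices by its supremum, which is still countable hence $<\omega_1$, and use the continuity axiom at limits). This gives a function $g\colon\omega_1\to\omega_1$; one then takes $C$ to be the set of $\alpha$ closed under $g$, i.e. $\alpha$ such that $g(\beta)<\alpha$ for all $\beta<\alpha$. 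That $C$ is a club is the usual fixed-point-of-a-normal-function argument: it is closed by the continuity axiom (a limit of points closed under $g$ is closed under $g$, using $M_\lambda=\bigcup_{\alpha<\lambda}M_\alpha$), and unbounded because from any $\alpha_0$ one iterates $\alpha_{n+1}=g(\alpha_n)$ and takes the sup. For such $\alpha\in C$ we get $f(M_\alpha)=\bigcup_{\beta<\alpha}f(M_\beta)\subset\bigcup_{\beta<\alpha}M_{g(\beta)}\subset M_\alpha$, as wanted. (One should note the overline passes through: $f(\overline{M_\alpha})\subset\overline{f(M_\alpha)}\subset\overline{M_\alpha}$ when $\alpha\in C$, which is what part (ii) and Proposition~\ref{Lefschetz-fppf} actually use.)

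For part (ii), let $H\colon M\times[0,1]\to M$ be a homotopy from $\mathrm{id}_M$ to $f$. The idea is to apply the same closing-off reasoning, but now to $H$ rather than to $f$ alone. The set $M_\alpha\times[0,1]$ is still Lindel\"of (product of Lindel\"of with compact), so $H(M_\alpha\times[0,1])$ is contained in some $M_{h(\alpha)}$ with $h(\alpha)\ge\alpha$; the function $h$ again has a club of closure points, and for $\alpha$ in that club $H(M_\alpha\times[0,1])\subset M_\alpha$, hence also $H(\overline{M_\alpha}\times[0,1])\subset\overline{M_\alpha}$. Restricting $H$ to $\overline{M_\alpha}\times[0,1]$ then gives a homotopy inside $\overline{M_\alpha}$ from $\mathrm{id}_{\overline{M_\alpha}}$ to $f|\overline{M_\alpha}$, which is exactly the required conclusion; indeed since we only need a single $\alpha$, any element of the club works.

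The one point that needs a little care—and is the closest thing to an obstacle—is ensuring that the Lindel\"ofness of $M_\alpha$ really lets us pin $f(M_\alpha)$ (or $H(M_\alpha\times I)$) inside \emph{one} term of the exhaustion rather than merely inside a countable union of terms: this is where the chain structure of the exhaustion and the continuity axiom at limit ordinals are essential, since the supremum of countably many countable ordinals is countable. Everything else is the routine ``the diagonal intersection / closure set of a normal-type function is a club'' bookkeeping, and the overline-commutes-with-$f$ inclusion $f(\overline{A})\subset\overline{f(A)}$ for continuous $f$ that was already invoked earlier in the paper.
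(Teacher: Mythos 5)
Your proof is correct, and part (i) is essentially the paper's argument (the paper runs the same closing-off iteration; phrasing it as the club of closure points of the function $\beta(\cdot)$ is the same thing). Part (ii), however, takes a genuinely different and in fact cleaner route. The paper discretises the homotopy in time: it applies (i) to each dyadic-time map $f_t$, invokes the lemma that a countable intersection of clubs is a club to get stages preserved by all dyadic times, and then uses joint continuity of the homotopy to upgrade the invariance from dyadic $t$ to all $t\in[0,1]$. You instead treat the homotopy $H\colon M\times[0,1]\to M$ as a single map and observe that $M_\alpha\times[0,1]$ is still Lindel\"of (Lindel\"of times compact), so the whole track $H(M_\alpha\times[0,1])$ sits inside one $M_{h(\alpha)}$; the closure points of $h$ then give, in one stroke, a club of $\alpha$ with $H(\overline{M_\alpha}\times[0,1])\subset\overline{M_\alpha}$, using $\overline{M_\alpha\times[0,1]}=\overline{M_\alpha}\times[0,1]$ and $H(\overline{A})\subset\overline{H(A)}$. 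This bypasses both the countable-intersection-of-clubs lemma and the dyadic-to-continuous passage, and it delivers a club of good stages rather than a single one. One small bookkeeping point you should make explicit: the conclusion $f(M_\alpha)=\bigcup_{\beta<\alpha}f(M_\beta)$ at a closure point $\alpha$ uses the continuity axiom and hence requires $\alpha$ to be a limit ordinal; since the limit ordinals form a club, just intersect with them (or, as the paper does, only ever produce $\alpha$ as a supremum of an increasing $\omega$-sequence).
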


\begin{proof} (i) The image $f(M_\alpha)$ is Lindel\"of, hence there
is a $\beta(\alpha)\ge\alpha$ so that $f(M_\alpha)\subset
M_{\beta(\alpha)}$. Define inductively $\alpha_1=1$,
$\alpha_n=\beta(\alpha_{n-1})$ and let $\alpha=\sup_n
\alpha_n$. By the continuity axiom
$M_{\alpha}=\bigcup_{n=1}^{\infty} M_{\alpha_n}$.
Consequently,  $f(M_\alpha)=f(\bigcup_{n=1}^{\infty}
M_{\alpha_n})=\bigcup_{n=1}^{\infty}f(M_{\alpha_n})\subset
\bigcup_{n=1}^{\infty}M_{\alpha_{n+1}}=M_{\alpha}$. This
implies the first clause. 

(ii)
Let $(f_t)$, $t\in [0,1]$ be a homotopy relating ${\rm
id}_{M}=f_0$ to $f=f_1$. Let $D$ by the set of dyadic numbers
in $[0,1]$. For each $t\in D$, the map $f_t$ preserves a club
$C_t$ of stages $M_{\alpha}$
by (i).
As any countable intersection of clubs is again a
club\footnote{This is an
exercise related to the leapfrog
argument. Hint: the leapfrog argument shows the case of a
two-fold intersection. Induction the case of a finite
intersection. The countable case intersection easily follows:
if $C_n$, $n\in \omega$ is a countable collection of clubs,
choose $x_1\in C_1$, find a
greater $x_2\in C_1\cap C_2$, and so on $x_n\in C_1\cap\dots
\cap C_n$
greater than $x_{n-1}$, then $\sup_{n<\omega} x_n$ belongs to
all $C_n$'s. {\it q.e.d.}},
$C=\bigcap_{t\in D} C_t$ is a club indexing stages
preserved by all dyadic-times of the homotopy $(f_t)$, i.e.
$f_t(M_\alpha)\subset M_\alpha$ for all $t\in D$ and all
$\alpha\in C$. By continuity $f_t(\overline{M_\alpha})\subset
\overline{f_t(M_\alpha)}\subset\overline{M_\alpha}$. By
joint-continuity, the inclusions
$f_t(\overline{M_\alpha})\subset\overline{M_\alpha}$ (for
$\alpha\in C$) hold indeed for all $t\in [0,1]$.
%
%
%
%
This
provides the required homotopy.
\end{proof}

Typical illustrations of Proposition~\ref{Lefschetz-fppf}
arise from the $3$-sphere by excising a finite number $n\ge 1$
of (tame) \hbox{$3$-balls}, and then capping off by long
3D-pipes, e.g. of the form ${\Bbb S}^2\times {\Bbb L}_{\ge
0}$. (Other pipes can be used, provided they are sufficiently
``civilised'' to allow an exhaustion of the bordered canonical
type.) Thus, all such $3$-manifolds have the fpp for flows,
provided $n\ge 1$.

\begin{rem} {\rm
By the Poincar\'e conjecture, the above family of examples is
essentially exhaustive. Indeed, {\it a simply-connected
bordered compact $3$-manifold $W$
is a holed $3$-sphere}, i.e.,  ${\Bbb S}^3$ excised by the
interior of a disjoint finite family of tame $3$-balls.
[Proof: by duality
the
boundary components of $W$ are $2$-spheres, cap them off by
$3$-balls
to apply the Poincar\'e conjecture.
Decapsulating
back  the added $3$-balls
leads to the conclusion.] Therefore,
the $3$-ball and its holed avatars ({\it Swiss cheeses} or
{\it Grundformen} in the jargon of M\"obius
\cite{Moebius_1863}) are the only possible bags candidates,
for the construction of simply-connected $\omega$-bounded
$3$-manifolds.}
\end{rem}

Albeit far from proving it, Proposition~\ref{Lefschetz-fppf}
infers a certain evidence to
Conjecture~\ref{3D-hairy-ball-thm}, in the sense that any
corruption---if it exists---is instigated by a pipe lacking a
nice canonical exhaustion---a {\it wild} pipe, so to speak.

Even though Conjecture~\ref{3D-hairy-ball-thm} fails in
dimension $\ge 4$, Proposition~\ref{Lefschetz-fppf} gives
varied special cases, e.g., ${\Bbb S}^2\times {\Bbb L}^2$ (or
more generally $M\times {\Bbb L}^n$, where $M$ is a closed
manifold with $\chi(M)\neq 0$ and $n\ge 0$), for those
manifolds have a regulated growing mode (canonical exhaustion)
around a bag with non-zero Euler characteristic.

\begin{rem} {\rm Of course Lefschetz leads to somewhat
stronger results than those obtained via Brouwer, even when
each $W_\alpha$'s are contractible. For instance,
Proposition~\ref{Lefschetz-fppf} applies to the contractible
manifold-patch $W^4$ (of Freedman 1982 \cite{Freedman82})
bounding the Poincar\'e homology $3$-sphere $\Sigma^3$,
considered as the bag of the manifold obtained by capping off
the boundary by $\Sigma^3\times {\Bbb L}_{\ge 0}$.}
\end{rem}

\begin{rem} ($\omega$-bounded disruption of Heinz Hopf)\label{reverse-engineering} {\rm
One should not expect too much from a reverse engineering to
Proposition~\ref{Lefschetz-fppf}, where assuming each
$W_\alpha$ brushed
one hopes by a transfinite
assembly to gain a brush on $M$.
For instance,  on the connected-sum surface $M={\Bbb L}^2 \#
{\Bbb L}^2$ (two gemelar long planes ${\Bbb L}^2$ linked by a
worm hole), one cannot glue transfinitely the natural flows on
the annuli of the
canonical $\omega_1$-exhaustion. This would corrupt the global
topology of the pipe (compare
Proposition~\ref{Whitney's_flow-failure-on-plane_pipe}). With
some extra work, one can show that $M$ lacks any brush. This
involves a special argument similar to the one
exposed in Proposition~\ref{ad_hoc-long-quadrant} below.}
\end{rem}

\begin{rem} {\rm  Proposition~\ref{Lefschetz-fppf} applies as well in dimension
2, offering an alternate road to
Theorem~\ref{omega-bounded_Hairy-ball-thm}, at first glance at
least. However
this approach is again
plagued by the existence of
``wild''
pipes (cf. Nyikos~\cite[\S 6, p.\,669--670]{Nyikos84}). Hence
the Poincar\'e-Bendixson argument
(Theorem~\ref{omega-bounded_Hairy-ball-thm}) still incarnates
a wider range of applicability, for it applies
unconditionally (yet, just in the simply connected case!).
%
%

 }

\end{rem}

The following two hazardous conjectures
(the first would
follow from the second) are merely an avowal of our
ignorance:

\begin{conj}\label{hairy-ball-conj:crazy} An
$\omega$-bounded $n$-manifold $M$
with $\chi(M)\neq 0$ has the ffp for flows.
\end{conj}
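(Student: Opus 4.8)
The plan is to route everything through Lefschetz fixed point theory, generalising Proposition~\ref{Lefschetz-fppf} but dispensing with the \emph{a priori} assumption of a canonical exhaustion. Since an $\omega$-bounded manifold is first countable and countably compact, it is sequentially compact; hence by the cascadisation argument of Proposition~\ref{fpp-for-flows} it suffices to prove that $M$ has the fixed point property for a single self-map $f\colon M\to M$ homotopic to the identity (concretely, the time-one map $f_1$ of the flow, which is isotopic to $\mathrm{id}_M$). Thus the whole dynamical content is absorbed into the nested-intersection Lemma~\ref{nested_intersection}, and what remains is a purely topological statement: an $\omega$-bounded $n$-manifold with $\chi\neq 0$ has the fpp for maps homotopic to the identity.

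Granting the two ``optimistic conjectures'' of the introduction, this topological statement is immediate: if (1) the singular homology $H_*(M)$ is finitely generated, then $\chi(M)$ and the Lefschetz number $L(f)$ are well-defined, homotopy invariance gives $L(f)=L(\mathrm{id}_M)=\chi(M)\neq 0$, and if (2) a non-vanishing Lefschetz number forces a fixed point, we are done. So the real task is to establish (1) and (2). A promising partial route, which already works, is the exhaustion one: if $M$ admits an $\omega_1$-exhaustion by Lindel\"of open submanifolds satisfying the continuity axiom of Lemma~\ref{exhaustion-preservation}, then that lemma traps $f$ inside some $\overline{M_\alpha}$ with $f\vert\overline{M_\alpha}$ homotopic to the identity; if moreover the truncations can be taken to be \emph{compact bordered submanifolds} whose Euler characteristic is the (non-zero) one inherited from $M$, the classical Lefschetz theorem for compact metric ANRs (\cite{Lefschetz_1937}) closes the argument. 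This is exactly Proposition~\ref{Lefschetz-fppf}; the point of the conjecture is to remove the canonical-exhaustion hypothesis.

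To that end I would try to build a structure theory for $\omega$-bounded $n$-manifolds generalising Nyikos's bagpipe theorem \cite{Nyikos84}: $M$ should be a compact ``bag'' with finitely many ``long pipes'' attached. Conjecture~(1) would then follow by bounding the homology of each pipe---ideally by showing that some compact truncation of the pipe is a homology equivalence---and the hypothesis $\chi(M)\neq 0$ would constrain which bags and pipes can occur. For~(2) one would either show that every pipe admits a canonical exhaustion by compact bordered pieces (reducing to the previous paragraph and Proposition~\ref{Lefschetz-fppf}), or develop a version of Lefschetz theory robust enough to tolerate ``wild'' ends, for instance by passing to \v{C}ech cohomology or to the pro-homotopy type of the nested truncations, where a single stage need not look like a manifold. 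In dimension $2$ the bagpipe theorem does supply the decomposition, and for $\omega$-bounded \emph{dichotomic} surfaces one already gets a fixed point or a periodic orbit (Corollary after Theorem~\ref{omega-bounded_Hairy-ball-thm}), the periodic case being handled by Brouwer when the orbit is null-homotopic---so the residual difficulty in dimension $2$ is concentrated in non-dichotomic surfaces and in periodic orbits that fail to bound.

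The main obstacle is precisely the plague of \emph{wild pipes}. Even in dimension $2$, where the bag-plus-pipes picture is available, an individual long pipe need not admit a canonical exhaustion by compact bordered cylinders (\cite[\S 6]{Nyikos84}), so neither the exhaustion route nor a naive computation of $H_*$ is available in general; in dimension $\ge 3$ there is not even a structure theorem to start from. I expect the conjecture to be out of reach without a genuine advance on the geometry of ends of non-metric manifolds, and that the realistic intermediate targets are: (a) the ``tame at infinity'' case (manifolds admitting a canonical exhaustion), which is already Proposition~\ref{Lefschetz-fppf}; and (b) the finite-generation conjecture~(1) in isolation, which would at least make $L(f)$ a legitimate invariant and reduce the whole problem to conjecture~(2). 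A Poincar\'e--Bendixson attack in the style of Theorem~\ref{omega-bounded_Hairy-ball-thm} is not an option here, being confined to surfaces and, as it stands, to the simply-connected (Schoenflies) case.
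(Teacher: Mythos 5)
This statement is presented in the paper as an open conjecture (one of two ``hazardous conjectures'' that are ``merely an avowal of our ignorance''), and the paper offers no proof of it. Your proposal correctly refrains from claiming one, and the programme you sketch---cascadisation to reduce to a map homotopic to the identity, the reduction to the finite-generation and Lefschetz-space conjectures (the paper's Conjecture~\ref{Lefschetz-conj:crazy}, from which this one would follow), the tame case handled by Proposition~\ref{Lefschetz-fppf} via canonical exhaustions, and the plague of wild pipes as the residual obstruction---coincides with the paper's own assessment of the state of the problem.
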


\begin{conj}\label{Lefschetz-conj:crazy}
$\omega$-bounded $n$-manifolds have finite-dimensional
(singular) homology over the rationals, and are Lefschetz
spaces, i.e., any self-map with non-zero Lefschetz number
has a fixed point.
\end{conj}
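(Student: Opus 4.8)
The plan is to reduce the conjecture to a structure theorem for $\omega$-bounded manifolds extending Nyikos's bagpipe theorem \cite{Nyikos84} beyond dimension $2$: namely, that every $\omega$-bounded $n$-manifold $M$ admits a \emph{canonical exhaustion} $M=\bigcup_{\alpha<\omega_1}W_\alpha$ by compact bordered $n$-submanifolds in the sense of Proposition~\ref{Lefschetz-fppf} (so $M=\bigcup_\alpha{\rm int}\,W_\alpha$ with the continuity axiom of Lemma~\ref{exhaustion-preservation}), and moreover that the inclusions $W_\alpha\hookrightarrow W_\beta$ induce isomorphisms on rational homology for $\alpha\le\beta$ ranging over a club $C\subset\omega_1$ (an ``asymptotic rigidity'' refinement that in dimension $2$ is exactly the anatomy of long pipes). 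Granting this, both halves become soft.

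For the finiteness clause: since the ${\rm int}\,W_\alpha$ form an increasing open exhaustion, every singular chain of $M$ is carried by some $W_\alpha$, so $H_*(M;{\Bbb Q})=\varinjlim_\alpha H_*(W_\alpha;{\Bbb Q})$. Each $H_*(W_\alpha;{\Bbb Q})$ is finite-dimensional ($W_\alpha$ compact), hence the subspaces $V_\alpha:={\rm Im}\bigl(H_*(W_\alpha;{\Bbb Q})\to H_*(M;{\Bbb Q})\bigr)$ form an increasing chain of finite-dimensional subspaces, indexed by $\omega_1$, with union $H_*(M;{\Bbb Q})$; as $\alpha\mapsto\dim_{{\Bbb Q}}V_\alpha$ is a non-decreasing ${\Bbb N}$-valued function on $\omega_1$ it is eventually constant, so the chain stabilises and $H_*(M;{\Bbb Q})=V_{\alpha_0}$ is finite-dimensional. (This uses only the ``club-free'' part of the structure theorem.)

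For the Lefschetz clause: let $f\colon M\to M$ be continuous with $\Lambda(f)\ne 0$ (well defined by the previous step). Lemma~\ref{exhaustion-preservation}(i), applied to the open exhaustion $({\rm int}\,W_\alpha)$, yields a club of $\alpha$ with $f({\rm int}\,W_\alpha)\subset{\rm int}\,W_\alpha$, whence $f(W_\alpha)\subset\overline{f({\rm int}\,W_\alpha)}\subset W_\alpha$ by continuity (interiors being dense, $W_\alpha$ closed). Pick $\alpha$ in the intersection of this club with $C$ and large enough that $\iota_*\colon H_*(W_\alpha;{\Bbb Q})\to H_*(M;{\Bbb Q})$ is an isomorphism; then $\iota\circ(f\vert W_\alpha)=f\circ\iota$ shows $(f\vert W_\alpha)_*$ is conjugate to $f_*$, so $\Lambda(f\vert W_\alpha)=\Lambda(f)\ne 0$. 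Since $W_\alpha$ is a compact bordered manifold, hence a compact metric ANR, the classical Lefschetz fixed point theorem (Lefschetz 1937 \cite{Lefschetz_1937}; or Jaworowski \cite{Jaworowski_1971}, $f\vert W_\alpha$ being a compact map) produces a fixed point of $f\vert W_\alpha$, a fortiori of $f$. Note that, unlike in Proposition~\ref{Lefschetz-fppf}, no ``homotopic to the identity'' hypothesis is needed, the Euler-characteristic condition there being now subsumed by the finite-homology conclusion.

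The hard part---indeed essentially the whole content---is the structure theorem. First, one must exhibit \emph{any} exhaustion of $M$ by compact bordered $n$-submanifolds: this is precisely where the \emph{wild pipes} of Nyikos \cite[\S\,6]{Nyikos84} bite, since a long pipe need not admit an exhaustion by compact bordered cylinders, and at a limit stage its ``cross-section'' may degenerate into a continuum which is not even an ANR, so the truncations $W_\alpha$ need not be manifolds at all. Second, even given such an exhaustion one needs the asymptotic rigidity that the bonding maps are eventually homology isomorphisms---that the topology of $M$ at infinity along each end is eventually product-like---which is available in dimension $2$ via Nyikos's analysis of pipes but is the crux of the (conjectural) $3$-dimensional bagpipe theorem, a prospective application of Perelman's geometrisation, and is wide open for $n\ge 4$. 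A realistic intermediate target is therefore the restricted conjecture for $\omega$-bounded manifolds already admitting a canonical exhaustion whose bonding maps stabilise on a club---for instance Nyikos bagpipes all of whose pipes are ``tame''---where the argument above runs verbatim.
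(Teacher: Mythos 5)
The statement you are proving is labelled a \emph{conjecture} in the paper, and the authors introduce it (together with Conjecture~\ref{hairy-ball-conj:crazy}) explicitly as ``an avowal of our ignorance'': no proof is offered there, and none is known. Your proposal does not close this gap. What you give is a conditional reduction: \emph{assuming} a higher-dimensional bagpipe-type structure theorem --- a canonical exhaustion $M=\bigcup_{\alpha<\omega_1}W_\alpha$ by compact bordered submanifolds satisfying the continuity axiom, whose bonding maps are eventually rational homology isomorphisms on a club --- both clauses follow. That conditional part is sound (the stabilisation of an increasing $\omega_1$-chain of finite-dimensional subspaces, the club of $f$-invariant stages via Lemma~\ref{exhaustion-preservation}(i), and the classical Lefschetz theorem on the compact metric ANR $W_\alpha$; your observation that homology-isomorphic bonding maps let you drop the ``homotopic to the identity'' proviso of Proposition~\ref{Lefschetz-fppf} is correct). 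But it is essentially the machinery the paper already deploys in Proposition~\ref{Lefschetz-fppf} and Lemma~\ref{exhaustion-preservation}, and the paper is explicit that the reason the conjecture remains open is precisely the failure of that machinery in general: the ``plague of wild pipes.''

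Concretely, the missing idea is the structure theorem itself, and you correctly identify it as ``essentially the whole content.'' An $\omega$-bounded manifold need not admit \emph{any} exhaustion by compact bordered submanifolds with tame limit stages --- already in dimension $2$ Nyikos exhibits pipes whose truncations at limit ordinals degenerate into continua that are not manifolds (nor ANRs), so neither the finiteness clause (which needs the $V_\alpha$ to be images of finite-dimensional groups) nor the Lefschetz clause (which needs $W_\alpha$ to be a compact ANR) gets off the ground without it. In dimension $3$ the existence of such exhaustions is itself conjectural (a prospective application of geometrisation), and for $n\ge 4$ nothing is available. So the honest conclusion of your argument is the restricted statement you name at the end --- the conjecture holds for $\omega$-bounded manifolds admitting a canonical exhaustion with eventually homology-isomorphic bonding maps --- which is a legitimate and slightly sharper variant of Proposition~\ref{Lefschetz-fppf}, but not a proof of Conjecture~\ref{Lefschetz-conj:crazy}.
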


\subsection{{\it Ad hoc} hairy balls
via
Cantorian
rigidity
}\label{ad_hoc:sec}

Theorem~\ref{omega-bounded_Hairy-ball-thm} does not apply to
${\Bbb L}_+^2={\Bbb L}_+ \times {\Bbb L}_+$ the square of the
open long ray ${\Bbb L}_+$, nor does
Proposition~\ref{Lefschetz-fppf} apply to ${\Bbb L}^2 \# {\Bbb
L}^2$ or to the long plane surmounted by a cylindrical tower
${\Bbb S}^1\times {\Bbb L}_{\ge 0}$, where
${\Bbb L}_{\ge 0}$ is the closed long ray.
Nonetheless, it turns out that all
these surfaces
lack a brush.
Basically, the reason is that on the planar pipe the
underlying foliation is---by the ``Cantor rigidity'' of
\cite{BGG}---forced to behave asymptotically like concentric
squares, yielding a global long cross-section whose
return times
falsify the global topology of the planar pipe to a
cylindrical one. (Recall
that a continuous real-valued function on the long ray is
eventually constant.) Thus, the ultimate reason is
again the form of Cantor
rigidity effecting that a cylindrical pipe differs radically
from a planar pipe. We detail this
{\it ad~hoc} rigidity argument in the first situation (the
other cases are similar):


\begin{prop}\label{ad_hoc-long-quadrant}
The long quadrant $Q={\Bbb L}_+^2$ does not support a
brush.
\end{prop}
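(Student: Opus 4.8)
The plan is to assume, toward a contradiction, that $f\colon {\Bbb R}\times Q\to Q$ is a brush on the long quadrant $Q={\Bbb L}_+^2$, and to drive it to its ``Cantorian'' breaking point exactly as in Proposition~\ref{Whitney's_flow-failure-on-plane_pipe} and Lemma~\ref{planar_vs_cylindrical}. First I would invoke Theorem~\ref{folklore} (available since $\dim Q=2\le 3$): the orbits of $f$ assemble into an oriented one-dimensional foliation ${\cal F}$ of $Q$, and each leaf, being an orbit, is a continuous image of ${\Bbb R}$, hence Lindel\"of, hence a \emph{short} (metrisable) one-manifold; so every leaf is homeomorphic to ${\Bbb R}$ or to ${\Bbb S}^1$. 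A circle leaf would be a periodic orbit; since $Q$ is simply-connected it satisfies the Schoenflies property, so Lemma~\ref{Brouwer} would then supply a fixed point of $f$, contradicting non-singularity. Thus ${\cal F}$ foliates $Q$ by lines, and --- each leaf being Lindel\"of --- each leaf is contained in some $\overline{Q_\alpha}$, where $Q_\alpha=\{\,\|\cdot\|<\alpha\,\}$ and $\|(x,y)\|=\max\{x,y\}$ is the long norm $Q\to{\Bbb L}_+$ (as in Proposition~\ref{Whitney's_flow-failure-on-plane_pipe}); in particular the leaves are all ``bounded''.

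Next I would pin down the asymptotic shape of ${\cal F}$. The level sets $S_\alpha=\|\cdot\|^{-1}(\alpha)$ are the ``concentric squares'' --- here short $L$-shaped arcs --- and $Q=\bigcup_{\alpha<\omega_1}Q_\alpha$ is a continuous exhaustion by second-countable open subsets. Cutting $Q$ along its diagonal $\{x=y\}$ yields two long rhombic half-regions; the Cantorian rigidity of \cite[Section~7]{BGG} applies to each, and since a leaf meeting cofinally many of the $S_\alpha$ would be a long (non-Lindel\"of) leaf, hence not an orbit, the only admissible asymptotic regime is the one in which ${\cal F}$ is, outside some $Q_\beta$, tangent to the $S_\alpha$; that is, ${\cal F}$ coincides beyond $Q_\beta$ with the concentric-arc foliation. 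From this one reads off a global long cross-section: the diagonal ray $\{(x,x):x\ge\beta\}\cong{\Bbb L}_{\ge\beta}$ is transverse to ${\cal F}$ and meets each far leaf $S_\alpha$ ($\alpha>\beta$) exactly once (at its corner $(\alpha,\alpha)$), while the horizontal rays $\{y=c\}$ and the vertical rays $\{x=c\}$ furnish auxiliary long cross-sections.

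The contradiction then comes, as in Proposition~\ref{Whitney's_flow-failure-on-plane_pipe}, from the metric sensitivity of Whitney-type flows. For each pair of the cross-sections above the flow defines a \emph{transit-time} function --- say $\sigma\colon{\Bbb L}_{\ge c}\to{\Bbb R}$ sending $\alpha$ to the $f$-time the orbit through $(\alpha,c)$ needs to reach $(\alpha,\alpha)$ --- and transversality together with joint continuity of $f$ makes $\sigma$ continuous, hence \emph{eventually constant}, a continuous real-valued function on a long ray being eventually constant (cf. \cite[Lemma~4.3]{BGG}). Reinjecting all these eventually-constant transit times into the flow forces $f$, restricted to the far region $Q\setminus Q_\gamma$ (for $\gamma$ large), into a translation-like normal form along the cross-sections; comparing this normal form with the true topology of the planar long quadrant --- by a plumbing/Euler-obstruction argument of the kind proving Lemma~\ref{planar_vs_cylindrical} --- yields the contradiction. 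Hence $Q={\Bbb L}_+^2$ carries no brush.

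The main obstacle, I expect, is this last synthesis: one must transport the ``Cantor rigidity'' classification of \cite[Section~7]{BGG} --- originally tailored to the long plane ${\Bbb L}^2$ and to long pipes --- to the long quadrant, where the relevant leaves are non-compact arcs rather than the circles of Proposition~\ref{Whitney's_flow-failure-on-plane_pipe}, so that the compact-annulus trick (Lemma~\ref{sequentially-compact}) is no longer available; and one must identify precisely which topological invariant of $Q\setminus Q_\gamma$ the rigidified flow violates. By contrast the surrounding steps --- Theorem~\ref{folklore}, the Schoenflies/Brouwer exclusion of periodic orbits (Lemma~\ref{Brouwer}), and the eventual-constancy principle for real functions on the long ray --- are routine.
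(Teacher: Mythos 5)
Your strategy is the paper's own: pass to the induced foliation (Theorem~\ref{folklore}), cut along the diagonal, invoke the Cantorian rigidity of \cite[Section~7]{BGG}, rule out the long-ray regime because orbits are Lindel\"of, extract a transit-time function on a long ray, and use its eventual constancy to contradict the topology of the quadrant. Two steps, however, are genuinely gapped.

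First, you assert that rigidity forces ${\cal F}$ to \emph{coincide} with the concentric-arc foliation on all of $Q-Q_\beta$ for some $\beta$. That is more than \cite{BGG} delivers: the rigidity produces concentric (broken-line) leaves $L_\alpha$ only for $\alpha$ in a \emph{club} $C\subset{\Bbb L}_+$, and between consecutive club levels the foliation restricted to the resulting strip $[0,1]\times{\Bbb R}$ is unconstrained --- in particular it may contain Reeb components, in which case an orbit leaving the horizontal cross-section loops back like a boomerang and never reaches the diagonal or the vertical cross-section, so your transit-time function $\sigma$ is undefined there (and its continuity on the surviving set is no longer automatic). The paper must confront exactly this: it shows the Reeb components are finitely many (an infinite family would accumulate, via the convergence of increasing $\omega$-sequences in ${\Bbb L}_+$, onto a singularity front, contradicting non-singularity of the flow), and only after jumping past them is the transit time defined on a final segment ${\Bbb L}_{\ge\alpha_0}$. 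Your overstated rigidity hides this difficulty rather than resolving it.

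Second, the terminal contradiction is left as a ``normal form versus true topology'' comparison, which you yourself flag as the main obstacle; as written it does not close. The paper's Step~1 makes it concrete: eventual constancy of the transit time from $\Sigma_h={\Bbb L}_{\ge 1}\times\{1\}$ to $\Sigma_v=\{1\}\times{\Bbb L}_{\ge 1}$ exhibits a homeomorphism of ${\Bbb L}_{\ge 1}\times{\Bbb L}_{\ge 1}$ with the long strip $S={\Bbb L}_{\ge 0}\times[0,1]$, and these bordered surfaces are not homeomorphic --- the paper distinguishes them by foliations on their doubles, $2S$ being $\Lambda_{0,1}=({\Bbb S}^1\times{\Bbb L}_{\ge 0})\cup{\Bbb B}^2$, which carries no foliation at all by \cite{BGG}. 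Until you name the invariant of $Q-Q_\gamma$ (or of the quadrant versus the strip) that the rigidified flow violates, the argument is a plan rather than a proof.
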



\begin{proof} We divide
it in two steps:

{\sc Step 1: A special case.} {\it The
foliation $\cal F$ of $Q$ by short lines bifurcating when they
cross the diagonal of $Q$ (with leaves
of the form
$L_{\alpha}=(\{\alpha\}\times ]0,\alpha])\cup
(]0,\alpha]\times \{\alpha \})$,
$\alpha\in{\Bbb L}_+$)
does not come from a flow.}

By contradiction, assume the existence of a flow $f$ whose
phase-portrait induces $\cal F$. For each $\alpha\ge 1$, we
measure the time $\tau(\alpha)$ elapsed
until the point $(\alpha, 1)$
reaches the position $(1,\alpha)$. (This
is the time needed to travel from the horizontal cross-section
$\Sigma_h={\Bbb L}_{\ge 1}\times\{1\}$ to the vertical one
$\Sigma_v=\{1\}\times{\Bbb L}_{\ge 1}$; after a time-reversion
we assume the motion to have the ``right'' orientation.) The
function $\tau$ is real-valued, continuous and defined on
${\Bbb L}_{\ge 1}$, so
must be eventually constant.
This would imply that ${\Bbb L}_{\ge 1}\times {\Bbb L}_{\ge
1}$ is homeomorphic to a long strip $S={\Bbb L}_{\ge 0}\times
[0,1]$. This is however a
falsification of the global topology: indeed it is easy to
show that these surfaces are not homeomorphic, e.g. they can
be distinguished by looking at foliations on their doubles.
[Recall from~\cite{BGG} that the double $2S$, which is
$\Lambda_{0,1}:=({\Bbb S}^1\times {\Bbb L}_{\ge 0})\cup{\Bbb
B}^2$, has no foliation.]

{\sc Step 2: The general case.} We
shall reach a reduction to
Step 1. Let $f$ be a brush on $Q$, and look at the induced
foliation
(Theorem~\ref{folklore}). Cut $Q$ along the diagonal, and
apply
\cite{BGG} to conclude that  each octant
$O_i$ $(i=1,2)$ is (asymptotically) foliated by straight long
rays or by straight short segments. The first option is to be
excluded for a flow,
%
%
%
%
hence both octants $O_{i}$ are
foliated by short segments occurring for a club $C_i\subset
{\Bbb L}_+$.
Passing to their common intersection $C=C_1\cap C_2$, we find
a new club where all the segments piece nicely together along
the diagonal. A
resemblance with case of Step 1 is emerging, except for a lack
of complete ``straightness'' of the foliation. Cutting the
foliation
along the
broken lines $L_{\alpha}$ for $\alpha \in C$, we get
subregions which are strips $[0,1]\times {\Bbb R}$. Typically
we may meet a Reeb foliation,
trouble-shooting the
well-definition of the map $\tau$:  orbits starting from the
horizontal cross-section $\Sigma_h$ instead of reaching the
vertical one $\Sigma_v$, will loop-back like boomerangs.
The issue
is that Reeb components cannot be too numerous, for an
infinitude of them implies
a clustering onto a singularity front, imposed by a visual
compression under the Cantor perspective
(use the fact that an increasing $\omega$-sequence in ${\Bbb
L}_+$ is convergent).
Jumping over
these finitely many Reeb components by looking sufficiently
far away (say $\alpha \ge \alpha_0$) we may define the
function $\tau\colon {\Bbb L}_{\ge \alpha_0} \to {\Bbb R}$
to conclude as in Step~1.
\end{proof}
\subsection{Abstract separation yoga
via the five lemma}

We shall now redirect our attention to the pseudo-Moore
problem (Question~\ref{Pseudo-Moore:conj:question}) of showing
that a surface sharing
{\it in abstracto} the
distinctive features of the Moore surface lacks a brush. To
prepare the terrain, this section trains some abstract
non-sense ``Jordan'' separation yoga.
%
Below, singular homology  is understood and coefficients are
in ${\Bbb Z}$.

\begin{lemma}\label{five_lemma} Let $J$ be closed set of a space $M$. Assume
$J \subset U$ strictly contained in an open set $U$ of $M$.

{\rm (i)} If $J$ divides $M$, then $J$ divides $U$.

{\rm (ii)} If $J$ divides $U$ and if the map $H_1(U)\to
H_1(M)$ is surjective, then $J$ divides $M$.
\end{lemma}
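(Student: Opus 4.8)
The plan is to read ``$J$ divides $X$'' homologically, as the non-vanishing of $\tilde H_0(X\setminus J)$ (equivalently: $X\setminus J$ has more than one path-component), and then to compare the reduced singular homology sequences of the two pairs $(U,U\setminus J)$ and $(M,M\setminus J)$ via the inclusion $(U,U\setminus J)\hookrightarrow(M,M\setminus J)$. As in all the applications, $U$ and $M$ will be taken connected, so that $\tilde H_0(U)=\tilde H_0(M)=0$.

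First I would observe that this inclusion is an \emph{excision}: with $Z:=M\setminus U$ one has $Z\subset M\setminus J$ (since $J\subset U$) and, $U$ being open and $J$ closed, $\overline Z=Z\subset M\setminus J=\operatorname{int}(M\setminus J)$; hence the excision theorem gives isomorphisms $b_n\colon H_n(U,U\setminus J)\xrightarrow{\cong}H_n(M,M\setminus J)$ for every $n$. This is really the only point at which the closedness of $J$ enters, and I expect verifying this excision hypothesis to be the sole (entirely routine) subtlety; the remainder is diagram chasing in the commutative ladder joining $H_1(U)\to H_1(U,U\setminus J)\xrightarrow{\partial}\tilde H_0(U\setminus J)\to\tilde H_0(U)$ to $H_1(M)\to H_1(M,M\setminus J)\xrightarrow{\partial}\tilde H_0(M\setminus J)\to\tilde H_0(M)$, with middle vertical arrow the excision isomorphism $b_1$ and third vertical arrow the inclusion-induced $\gamma\colon\tilde H_0(U\setminus J)\to\tilde H_0(M\setminus J)$.

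For (i): since $\tilde H_0(M)=0$, exactness of the bottom row makes $\partial_M\colon H_1(M,M\setminus J)\to\tilde H_0(M\setminus J)$ surjective; commutativity of the left-hand square gives $\gamma\circ\partial_U=\partial_M\circ b_1$ with $b_1$ and $\partial_M$ surjective, hence $\gamma$ is surjective, and $\tilde H_0(M\setminus J)\neq 0$ forces $\tilde H_0(U\setminus J)\neq 0$. For (ii): I would feed the four-term piece $H_1(U)\to H_1(U,U\setminus J)\to\tilde H_0(U\setminus J)\to\tilde H_0(U)$, lying over the corresponding sequence for $M$, into the monomorphism half of the five lemma (the four lemma): the first vertical map $H_1(U)\to H_1(M)$ is epi by hypothesis, the second is the excision isomorphism $b_1$ (so mono), and the fourth has domain $\tilde H_0(U)=0$ (so is trivially mono); the lemma then yields that $\gamma$ is mono, so that $\tilde H_0(U\setminus J)\neq 0$ (i.e.\ $J$ divides $U$) propagates to $\tilde H_0(M\setminus J)\neq 0$ (i.e.\ $J$ divides $M$). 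I would close with a one-line remark that connectedness of $U$ (resp.\ $M$) is really used --- without it a component of $M$ disjoint from $J$, seen by $M$ but not by $U$, can spoil either implication.
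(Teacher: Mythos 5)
Your proof is correct and takes essentially the same route as the paper: both superpose the long exact sequences of the pairs $(U,U-J)$ and $(M,M-J)$, identify the relative groups by excision, and extract surjectivity (for (i)) resp.\ injectivity (for (ii)) of the inclusion-induced map $H_0(U-J)\to H_0(M-J)$ via the two halves of the five lemma. Your use of reduced homology, the explicit verification of the excision hypothesis, and the remark that connectedness of $U$ and $M$ is tacitly needed are welcome refinements of detail, not a different argument.
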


\def\ziehen{\hskip-6pt}
\begin{proof}
Superpose the two exact sequences of the pairs $(U,U-J)$ and
$(M,M-J)$:
$$
\begin{matrix}
&H_1(U)\ziehen &\rightarrow\ziehen &H_1(U,U-J)\ziehen
&\rightarrow\ziehen& H_0(U-J)\ziehen &\rightarrow\ziehen&
H_0(U)\ziehen &\rightarrow\ziehen& H_0(U,U-J)=0  \cr

&\downarrow & &\downarrow & &\downarrow &  &\downarrow  &
&\downarrow \cr

&H_1(M)\ziehen&\rightarrow\ziehen &H_1(M,M-J)\ziehen
&\rightarrow\ziehen& H_0(M-J) \ziehen &\rightarrow\ziehen&
H_0(M)\ziehen &
 \rightarrow\ziehen& H_0(M,M-J)=0
\end{matrix}
$$
The second down-arrow is
isomorphic by excision.
Recall the {\it five lemma}:
suppose that the diagram of abelian groups has exact rows and
each square is commutative:
$$
\begin{matrix}
&C_1\ziehen &\rightarrow\ziehen&C_2\ziehen
&\rightarrow\ziehen& C_3\ziehen &\rightarrow\ziehen&
C_4\ziehen &\rightarrow\ziehen& C_5 \cr

&\quad\downarrow  f_1 &&\quad\downarrow  f_2 &
&\quad\downarrow f_3 & &\quad\downarrow f_4 & &\quad\downarrow
f_5 \cr

&D_1\ziehen &\rightarrow\ziehen &D_2\ziehen
&\rightarrow\ziehen & D_3\ziehen &\rightarrow\ziehen&
D_4\ziehen &
 \rightarrow\ziehen& D_5
\end{matrix}
$$
\vskip-5pt \noindent Then

(1) if $f_2$ and $f_4$ are epimorphisms and $f_5$ is a
monomorphism, then $f_3$ is an epimorphism.

(2) if $f_2$ and $f_4$ are monomorphisms and $f_1$ is an
epimorphism, then $f_3$ is a monomorphism.

\noindent Conclude by observing that (1) proves (i), while (2)
establishes (ii).
\end{proof}

\subsection{Back to the pseudo-Moore
question:
separable hairy ball theorem}

The following is another stagnation result, dual to
Theorem~\ref{Brouwer}, as it applies to separable manifolds
(separability is the
{\it d\'esincarnation} of $\omega$-boundedness as the
conjunction
of both point-set properties collapses to compactness).

\begin{theorem}\label{hairy_ball_thm:separable} Any flow
on a simply-connected, separable, non-metric,
non-bordered
surface has a
fixed point.
\end{theorem}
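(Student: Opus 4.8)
The plan is to argue by contradiction. Suppose $f\colon{\Bbb R}\times M\to M$ were a brush on a simply-connected, separable, non-metric, boundaryless surface $M$. First I would manufacture a dense \emph{metric} invariant piece: by separability fix a countable dense $D\subset M$ and, by the phagocytosis principle (Gauld~\cite{Gauld_2009}), engulf it in a single chart $U\approx{\Bbb R}^2$; then $V:=f({\Bbb R}\times U)$ is open, $f$-invariant, connected (continuous image of ${\Bbb R}\times U\approx{\Bbb R}^3$), dense (it contains $D$) and Lindel\"of (${\Bbb R}$ being $\sigma$-compact and $U$ second countable), hence metric, so that $V\neq M$ because $M$ is not.

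Next I would produce an escaping closed orbit. Since simple-connectivity implies dichotomy~\cite{GaGa2010}, the flow is proper (Lemma~\ref{Bendixson}); pick $x\in M-V$ and a flow-box $B=f([-\epsilon,\epsilon]\times\Sigma_x)$ about $x$. The Bendixson-sack argument, coupled with the non-metric Schoenflies theorem~\cite{GaGa2010} and Brouwer's fixed point theorem (exactly as in Lemmas~\ref{Bendixson}, \ref{Brouwer} and \ref{Birkhoff}), forbids the orbit ${\Bbb R}x$ from returning to $\Sigma_x$ and forbids either semi-orbit of $x$ from having compact closure --- in each case a trapped Jordan curve would bound an invariant disc and force a fixed point. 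Hence $L:={\Bbb R}x$ is a properly embedded line, closed in $M$, running to infinity both ways, with $L\cap V=\varnothing$ and $L\cap B$ a single arc of the orbit.

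Then I would show that such a line separates $M$ into exactly two pieces. Covering $L$ by countably many charts yields a Lindel\"of (hence metric) open neighbourhood of $L$ in which a tubular neighbourhood $T$ of $L$ can be built; over the contractible base $L\approx{\Bbb R}$ it is a trivial product, so $(T,L)\approx({\Bbb R}^2,{\Bbb R}\times\{0\})$ and $L$ divides $T$ into two components. Feeding $J=L\subset T\subset M$ into the abstract separation yoga of Lemma~\ref{five_lemma}(ii) --- here $H_1(T)=0\to H_1(M)=0$ is trivially onto --- gives that $L$ divides $M$, and reading the superposed exact-sequence diagram for ranks pins $H_0(M-L)$ at rank $2$, so $M-L$ has precisely two components.

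The contradiction, and the step I expect to be the main obstacle, is the passage from local to global separation. Since $V$ is dense it meets both local sides of $L$ inside $B$; choose $y,z\in V\cap B$ on opposite sides. Connectivity of $V$ gives a path from $y$ to $z$ in $V$, hence in $M-L$ since $V\cap L=\varnothing$. What must still be checked is that $y$ and $z$ lie in \emph{different} components of $M-L$, i.e.\ that $H_0(B-L)\to H_0(M-L)$ is injective, equivalently $H_0(M-L,B-L)=0$. I would obtain the latter by path-pushing: any point of $M-L$ sits in a flow-box disjoint from the closed set $L$, whence density of the connected set $V$ routes it into $B-L$, so every $0$-cycle of $M-L$ is $M-L$-homologous to one carried by $B-L$. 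With injectivity in hand, the $M-L$-path from $y$ to $z$ is impossible, contradicting the previous step; hence $M$ admits no brush, which answers Question~\ref{Pseudo-Moore:conj:question} negatively.
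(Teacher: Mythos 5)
Your proposal is correct, and through the construction of the dense Lindel\"of invariant set $V=f({\Bbb R}\times U)$, the closed line orbit $L={\Bbb R}x$ outside $V$, and the tube-plus-five-lemma argument showing $L$ divides $M$, it coincides with the paper's proof. The two arguments part company only at the very last step: you extract the contradiction by proving $H_0(M-L,B-L)=0$ via path-pushing and deducing that $H_0(B-L)\to H_0(M-L)$ is injective, so that the $V$-path joining the two local sides of $L$ in the flow box is impossible; the paper instead observes that, once $L$ divides $M$, the \emph{connected} set $V\subset M-L$ must lie in a single component of $M-L$ and therefore cannot be dense (the other component being open and non-empty). The paper's ending is shorter and needs neither the count of exactly two components nor the injectivity of $H_0(B-L)\to H_0(M-L)$, whereas your local-to-global separation argument is sound but does more work than the contradiction requires; it is, incidentally, essentially the alternative ending that survives only in a suppressed draft of the paper's proof.
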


\begin{proof} By contradiction, let $f\colon
{\Bbb R} \times M\to M$ be a non-singular flow on such a
surface $M$. By separability, let $D$ be a countable dense
subset of $M$. By phagocytosis, like any countable subset of a
manifold, $D$ can be engulfed inside a chart $U$, see
\cite[Prop.\,1]{Gauld_2009}. Let ${\Bbb R} U=f({\Bbb
R}\times U)$ be the orbit of that (dense) chart $U$. The set
${\Bbb R}U$ is open, invariant (under the flow) and connected
(as a continuous image of the set ${\Bbb R}\times U \approx
{\Bbb R}^3$). Moreover ${\Bbb R} U$ is Lindel\"of, hence
cannot exhaust all of $M$
(which is non-metric).
%
%
%
Choose a point $x\in M-{\Bbb R} U$. A Poincar\'e-Bendixson
argument shows the orbit ${\Bbb R}x$ to be closed.
Besides, ${\Bbb R}x$ cannot be a circle,
for a fixed point
would be created by the non-metric Schoenflies
\cite{GaGa2010}, plus Brouwer. Hence ${\Bbb R}x$ is a line
(properly) embedded as closed set. 
Let $T$ be a tubular neighbourhood of ${\Bbb R}x$ (as usual
constructed within a Lindel\"of neighbourhood of the
Lindel\"of set ${\Bbb R}x$ by
metric combinatorial methods). This tube $T$
has a bundle structure over a base which is contractible, so
is trivial (Ehresmann-Feldbau-Steenrod)\footnote{Here it is
crucial that the base is metric (paracompact) as shown by the
tangent bundle to the contractible classical version of the
Pr\"ufer surface ($P$ collared). (Compare
Spivak~\cite{Spivak}.)}. In particular ${\Bbb R}x$ divides
$T$, so by Lemma~\ref{five_lemma}~(ii)
${\Bbb R}x$
divides $M$. As ${\Bbb R} U \subset M- {\Bbb R}x$, the
connectedness of ${\Bbb R} U$ implies its containment in one
component of $M- {\Bbb R}x$, but then ${\Bbb R} U$ fails to be
dense in $M$. This contradiction completes the proof.
\end{proof}

Albeit
quite general, the fixed-point theorems obtained so far
(Theorems \ref{omega-bounded_Hairy-ball-thm} and
\ref{hairy_ball_thm:separable}) have the serious limit\-ation
that they only apply to the simply-connected case. In the
non-simply-connected case we may sometimes appeal to
Proposition~\ref{Lefschetz-fppf}, at least if the long pipes
have a nice canonical exhaustion, and dually appeal to
Theorem~\ref{CTD:thm} if there is a cytoplasmic decomposition.
Let us however
not miss the following:

\begin{cor}\label{separable_hairy_ball_pi_1_ctble:cor} A
separable non-metric surface (boundaryless) with countable
fundamental group
lacks a brush.
\end{cor}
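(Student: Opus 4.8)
The plan is to reduce to the simply-connected situation already settled in Theorem~\ref{hairy_ball_thm:separable} by passing to the universal covering. So suppose, for contradiction, that $M$ is a separable, non-metric, boundaryless surface with countable $\pi_1$ carrying a brush $f\colon{\Bbb R}\times M\to M$; we may assume $M$ connected (otherwise restrict attention to a non-metric connected component). Being locally Euclidean, $M$ is locally path-connected and semi-locally simply-connected, hence admits a universal covering $p\colon\widetilde M\to M$, where $\widetilde M$ is a connected, simply-connected, boundaryless surface and each fibre $p^{-1}(\ast)$ is in bijection with $\pi_1(M)$, hence countable.

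First I would verify that $\widetilde M$ keeps the point-set features needed. For separability: if $D\subset M$ is countable and dense, then $p^{-1}(D)=\bigcup_{d\in D}p^{-1}(d)$ is a countable union of countable fibres, so countable, and it is dense in $\widetilde M$ since $p$ is an open surjection (any non-empty open $\widetilde V\subset\widetilde M$ maps to a non-empty open set of $M$, which meets $D$). For non-metrisability: were $\widetilde M$ metric, then, being a connected manifold, it would be Lindel\"of, whence $M=p(\widetilde M)$, a continuous image of a Lindel\"of space, would be Lindel\"of, hence metric by Urysohn---contradicting the hypothesis on $M$. So $\widetilde M$ is a simply-connected, separable, non-metric, boundaryless surface. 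Next I would lift the flow: for $x\in\widetilde M$ and $t\in{\Bbb R}$ let $\widetilde f(t,x)$ be the endpoint of the unique lift, starting at $x$, of the path $s\mapsto f(s,p(x))$ on the parameter interval between $0$ and $t$. Unique path-lifting gives the group law $\widetilde f(t',\widetilde f(t,x))=\widetilde f(t+t',x)$ and $\widetilde f(0,x)=x$, and continuity of $\widetilde f$ as a map ${\Bbb R}\times\widetilde M\to\widetilde M$ follows from the covering homotopy theorem applied to the homotopy $(s,x)\mapsto f(s,p(x))$. Moreover $\widetilde f$ is a brush: if $x$ were a stationary point of $\widetilde f$, then $f(t,p(x))=p(\widetilde f(t,x))=p(x)$ for all $t$, so $p(x)$ would be stationary for $f$, contrary to non-singularity of $f$.

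Finally, Theorem~\ref{hairy_ball_thm:separable} applies to $\widetilde f$ on $\widetilde M$ and produces a fixed point, contradicting that $\widetilde f$ is a brush; hence $M$ admits no brush. The only step requiring some care---the rest being routine covering-space and general-topology bookkeeping---is the joint continuity of the lifted flow $\widetilde f$, i.e.\ the continuous dependence of a lifted path on its initial point; this is exactly the content of the covering homotopy (path-lifting) theorem, and causes no trouble here since ${\Bbb R}$ is (locally) contractible.
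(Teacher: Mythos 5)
Your proof is correct and is exactly the paper's argument: pass to the universal covering (which stays separable, non-metric, simply-connected and boundaryless), lift the brush, and apply Theorem~\ref{hairy_ball_thm:separable}. You have merely written out the routine verifications (countable fibres, openness of $p$, the Lindel\"of/Urysohn argument for non-metrisability, and path-lifting for the flow) that the paper leaves implicit.
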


\begin{proof} Given a flow on such a surface,
lift it to
the universal covering
(still separable) and
apply (\ref{hairy_ball_thm:separable}).
\end{proof}

\section{Transitive flows
}

Beside non-singular flows (brushes), another weakening of
minimality
is {\it transitivity}:

\begin{defn} {\rm
A flow on a topological space is \emph{transitive} if it has
at least one dense orbit. A space capable of a transitive flow
is said to be \emph{transitive}, and \emph{intransitive}
otherwise.}
\end{defn}

\subsection{Transitivity obstructions
(non-separability and dichotomy)}

Obviously, a transitive flow implies separability of the
phase-space (look at ${\Bbb Q}x$ the rational times of a point
with dense orbit).
Thus, ${\Bbb L}^2$ or  the collared Pr\"ufer surface, $P_{\rm
collar}$, are certainly intransitive.
This does not inform us
on the status of $2P$, the doubled Pr\"ufer surface.
Yet, for surfaces we have
the classical  Poincar\'e-Bendixson obstruction: 

\begin{lemma}\label{dicho_implies_intransitive} A dichotomic
surface (i.e., divided by any embedded circle)
is intransitive.
\end{lemma}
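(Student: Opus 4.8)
The plan is to feed everything through the properness of the flow, which Lemma~\ref{Bendixson} already extracts from dichotomy (its proof being precisely the classical Bendixson sack argument, with the non-metric subtleties absorbed into the Lindel\"ofness of a chart-orbit). So suppose, towards a contradiction, that $f\colon\mathbb{R}\times M\to M$ is a flow on the dichotomic surface $M$ admitting a dense orbit $L=f(\mathbb{R}\times\{x\})$. First note that $M=\overline{L}$ is automatically connected, being the closure of the connected set $L$; in particular no separate connectedness hypothesis on $M$ is needed.

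By Lemma~\ref{Bendixson} the flow $f$ is proper, so in particular the orbit $L$ is proper, and then Lemma~\ref{properness} gives that $\overline{L}\setminus L$ is closed in $M$; since $\overline{L}=M$, this says that $L$ is open in $M$. Now distinguish cases according to the stabiliser $G=\{t\in\mathbb{R}:f(t,x)=x\}$, which is a closed subgroup of $\mathbb{R}$, hence one of $\{0\}$, $T\mathbb{Z}$ with $T>0$, or all of $\mathbb{R}$. If $G=\mathbb{R}$ then $L=\{x\}$, whose density forces $M=\overline{\{x\}}=\{x\}$, absurd for a surface. If $G=T\mathbb{Z}$, then $L=f([0,T]\times\{x\})$ is a continuous image of a compactum, hence compact, hence closed in the Hausdorff space $M$; being also dense, $L=M$, and the induced continuous bijection from the compact space $\mathbb{R}/T\mathbb{Z}$ onto $M$ is then a homeomorphism, so $M\cong\mathbb{S}^1$, again absurd.

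The remaining (and main) case is $G=\{0\}$, i.e. the orbit map $\rho\colon\mathbb{R}\to L$, $\rho(t)=f(t,x)$, is a continuous bijection. Here properness of $L$ says exactly that the sets $f(\,]s,t[\times\{x\})=\rho(\,]s,t[\,)$ — which form a base for the topology pushed forward from $\mathbb{R}$ along $\rho$ — are open for the subspace topology on $L\subset M$; combined with the automatic continuity of $\rho$ into $M$, this forces $\rho$ to be a homeomorphism onto $L$ endowed with its subspace topology, so $L\cong\mathbb{R}$. But we have already shown that $L$ is an open (and connected) subset of the surface $M$, and an open connected subset of a surface has no cut point — every point has a connected punctured neighbourhood, by the local Euclidean structure — whereas every point of $\mathbb{R}$ is a cut point. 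This contradiction finishes the argument.

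I expect the delicate points to be bookkeeping rather than substance: confirming that ``$f$ is proper'' in the sense of Lemma~\ref{Bendixson} really supplies the hypothesis ``$L$ is proper'' required by Lemma~\ref{properness}; the elementary but easily fumbled deduction that a proper orbit with trivial stabiliser is a topologically embedded line $L\cong\mathbb{R}$; and the (routine) topological fact that an open subset of a $2$-manifold cannot be homeomorphic to $\mathbb{R}$ — provable via the cut-point observation above, or equivalently by invariance of domain, or by covering dimension. The genuinely non-metric content has all been pushed upstream into Lemma~\ref{Bendixson} and its use of the chart-orbit trick, so no further attention to metrisability is needed at this stage.
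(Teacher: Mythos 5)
Your argument is correct, but it is not the route the paper takes. The paper's proof re-runs the Bendixson sack argument directly on the dense orbit: density forces a return to a cross-section through $x$; the first-return Jordan curve, by dichotomy, traps the entire future of the orbit in one complementary component; and the ``short past'' of part of the flow box is then an open set the orbit never revisits, contradicting density outright. You instead launder the sack argument through Lemma~\ref{Bendixson} (dichotomy implies the flow is proper) and Lemma~\ref{properness} (a proper orbit is open in its closure), concluding that a dense orbit would be an open subset of $M$ homeomorphic to a point, a circle, or $\mathbb{R}$ --- which invariance of domain forbids. Both proofs run on the same engine, since the sack argument is exactly the content of Lemma~\ref{Bendixson}, but your packaging is more modular and in fact slightly sharper: it shows that on a dichotomic surface every orbit is an embedded point, circle or line, open in its own closure, so no orbit can be dense in any non-empty open set. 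The paper's version, by contrast, is self-contained within the lemma and avoids both the case analysis on the stabiliser and the appeal to invariance of domain. The hand-off you flag as delicate is indeed immediate: ``the flow is proper'' in Lemma~\ref{Bendixson} is defined per orbit (see the parenthetical in Lemma~\ref{Birkhoff}), so it supplies exactly the hypothesis of Lemma~\ref{properness} for the orbit $L$.
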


\begin{proof} (Again the
Bendixson
sack argument.) By contradiction, let $x\in M$ have a dense
orbit under the flow $f$.
Draw a cross-section $\Sigma_x$ through $x$ and consider an
associated flow-box $f([-\varepsilon,\varepsilon] \times
\Sigma_x)$. The point $x$ must eventually return to
$\Sigma_x$, so the piece of trajectory from $x$ to its first
return, $x_1$, closed up by the arc $A$ of $\Sigma_x$ joining
$x$ to  $x_1$, defines a circle $J$ in $M$.
The component of $M-J$ containing the near future of $x_1$
(e.g. $f(\varepsilon / 2, x_1)$) will contain the full future
of $x_1$. Then the ``short'' past of the arc $A$ namely
$f(]-\varepsilon,0[\times {\rm int} A)$ is an open
``subrectangle'' of the flow-box
which the orbit of
$x$ will never
revisit again. A contradiction. 
\end{proof}

This shows the intransitivity of $2P$ which is clearly
dichotomic (see
the next section for a
detailed argument). In particular, simply-connected surfaces
are dichotomic (by the non-metric Jordan curve theorem in
\cite{GaGa2010}\footnote{Recall that the Hausdorff separation
axiom is crucial, for
 it is easy to draw on the {\it branched plane} a circle which does
not disconnect it. (The latter arises from $2$ replicas of
${\Bbb R}^2$ by gluing along an open half-plane.)}),
hence intransitive. This holds for the Moore surface and the
{\it Maungakiekie} (i.e., the result of a long cytoplasmic
expansion of the $2$-cell, as discussed in
(\ref{cytoplasmic_expansions})), etc.

\subsection{Dichotomy:
heredity and Lindel\"of approximation}

This section collects some basic
lemmas on {\it dichotomy} (i.e., the Jordan curve theorem
holds true globally), establishing in particular the dichotomy
of the doubled Pr\"ufer surface, $2P$, i.e.
Calabi-Rosenlicht's version \cite{Calabi-Rosenlicht_1953}. The
reader with a good visual acuity can safely skip this section
without loss of continuity.

\begin{lemma}\label{DICH-heredity} Any open set
of a dichotomic surface is itself
dichotomic.
\end{lemma}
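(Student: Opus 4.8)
The plan is to deduce this at once from the abstract separation machinery of Lemma~\ref{five_lemma}, which was set up for precisely this purpose. Let $M$ be the given dichotomic surface and $U\subseteq M$ an open subset; I must show that every embedded circle $J\subset U$ disconnects $U$. First I would record the two elementary point-set observations needed to invoke Lemma~\ref{five_lemma}: the circle $J$ is compact and $M$ is Hausdorff, so $J$ is closed in $M$; and a nonempty open subset of a surface is a $2$-manifold, which cannot coincide with a circle (for instance $J$ has empty interior in $U$), so $J$ is \emph{strictly} contained in the open set $U$.

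With these in hand the argument is a single line: since $M$ is dichotomic, $J$ divides $M$, whence Lemma~\ref{five_lemma}(i) gives that $J$ divides $U$. As $J$ was an arbitrary embedded circle in $U$, the open set $U$ is dichotomic. (If one does not tacitly assume surfaces connected, one first replaces $M$ by the component containing $J$ and $U$ by its trace there; the discussion below is unaffected.)

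There is no genuine obstacle here — the content has been front-loaded into Lemma~\ref{five_lemma} — but it is worth recalling why clause~(i) of that lemma applies. The excision isomorphism $H_1(U,U-J)\cong H_1(M,M-J)$ used in its proof is legitimate because the excised set $M-U$ is closed with $\overline{M-U}=M-U\subseteq M-J$ (as $J\subseteq U$ and $M-J$ is open), and the surjectivity hypothesis feeding clause~(1) of the five lemma, namely that $H_0(U)\to H_0(M)$ is onto, holds trivially since every component of $U$ maps into the unique component of the connected surface $M$. The five lemma applied to the ladder of long exact sequences of $(U,U-J)$ and $(M,M-J)$ then forces $H_0(U-J)\to H_0(M-J)$ to be surjective, so $\operatorname{rank}H_0(U-J)\ge\operatorname{rank}H_0(M-J)\ge 2$, i.e.\ $J$ divides $U$. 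Thus the present lemma is, in effect, a one-line corollary of Lemma~\ref{five_lemma}(i).
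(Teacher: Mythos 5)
Your proof is correct and takes exactly the route the paper does: the paper's entire proof reads ``This follows at once from Lemma~\ref{five_lemma}~(i).'' Your additional verifications --- that $J$ is closed and strictly contained in $U$, that excision applies, and that $H_0(U)\to H_0(M)$ is onto so clause~(1) of the five lemma yields surjectivity of $H_0(U-J)\to H_0(M-J)$ --- are all accurate and simply make explicit what the paper leaves tacit.
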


\begin{proof} This follows
at once  from Lemma~\ref{five_lemma} (i).
\end{proof}

The converse---in order to be non-tautological---takes the
following form:

\begin{lemma}\label{DICH-Lindeloef-approx}
A surface each of whose Lindel\"of subsurfaces are dichotomic
is itself dichotomic.
\end{lemma}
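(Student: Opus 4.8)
Since the converse implication is exactly Lemma~\ref{DICH-heredity}, what is at stake is that a \emph{localised} failure of the Jordan curve theorem cannot be wiped out by restricting to Lindel\"of pieces. I would argue by contraposition: suppose $M$ is not dichotomic; passing to the component that carries the offending circle we may assume $M$ connected, and fix an embedded circle $J\subset M$ with $M-J$ connected. The goal is to manufacture a \emph{connected Lindel\"of} open subsurface $N\subseteq M$ with $J\subseteq N$ in which $J$ still fails to separate; such an $N$ is then a Lindel\"of subsurface that is not dichotomic, contradicting the hypothesis. The naive route --- take a Lindel\"of $N$ with $H_1(N)\to H_1(M)$ onto and invoke Lemma~\ref{five_lemma}(ii) --- is hopeless, since $H_1(M)$ may be uncountable (e.g.\ for $2P$). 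The key observation is that only a small, \emph{local} portion of $H_1(M)$ is relevant: its image in $H_1(M,M-J)$.

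So first I would localise. As $J$ is compact it has a tubular neighbourhood $T$ in $M$ --- an open annulus or M\"obius band according as $J$ is two- or one-sided --- built by the usual metric methods inside a Lindel\"of, hence (Urysohn) metrisable, open neighbourhood of $J$; note $T$ is itself a Lindel\"of open subsurface. By excision $H_1(M,M-J)\cong H_1(T,T-J)$, and the exact sequence of $(T,T-J)$ identifies this group with $\mathbb{Z}$ (annulus) or $\mathbb{Z}/2\mathbb{Z}$ (M\"obius); in particular $H_1(M,M-J)$ is finitely generated. On the other hand, $M$ and $M-J$ being connected, the reduced exact sequence of $(M,M-J)$ forces the connecting map $H_1(M,M-J)\to\widetilde H_0(M-J)=0$ to vanish, so $H_1(M)\to H_1(M,M-J)$ is surjective. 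Now choose finitely many classes of $H_1(M)$ whose images generate $H_1(M,M-J)$, represent each by a singular $1$-cycle, cover the (compact) union of their carriers together with $T$ by finitely many charts, and let $N$ be the component, containing $T$, of the union of $T$ with those charts. Then $N$ is open, connected, contains $J$, and is Lindel\"of (a finite union of Lindel\"of sets is Lindel\"of, and a Lindel\"of Hausdorff surface is second countable, so its open subsets are Lindel\"of). By naturality of the long exact sequences along $(N,N-J)\hookrightarrow(M,M-J)$, the composite $H_1(N)\to H_1(M)\to H_1(M,M-J)$ coincides with $H_1(N)\to H_1(N,N-J)\to H_1(M,M-J)$, the last arrow an isomorphism by excision; by construction the former composite is onto, hence so is $H_1(N)\to H_1(N,N-J)$.

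It remains to read off the conclusion from the reduced exact sequence of $(N,N-J)$: surjectivity of $H_1(N)\to H_1(N,N-J)$ makes the connecting map $H_1(N,N-J)\to\widetilde H_0(N-J)$ zero, while $\widetilde H_0(N)=0$ makes the same map onto $\widetilde H_0(N-J)$; hence $\widetilde H_0(N-J)=0$, i.e.\ $N-J$ is connected (it is non-empty, containing $T-J$). Thus $J$ is an embedded circle that does not divide the connected surface $N$, so $N$ is not dichotomic --- contradicting the standing hypothesis. Therefore $M-J$ is disconnected, and as $J$ was arbitrary, $M$ is dichotomic. The one genuinely delicate point is the one flagged at the outset: recognising that Lemma~\ref{five_lemma}(ii) must be applied not to all of $H_1$, but to the finitely generated local group $H_1(M,M-J)$, which connectedness of $M-J$ already ensures is hit from $M$; everything else (tubular neighbourhoods of a circle in a non-metric surface, stability of Lindel\"ofness under finite unions and passage to a component) is routine.
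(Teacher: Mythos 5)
Your proof is correct, but it takes a genuinely different route from the paper's. The paper argues entirely in point-set terms: it takes a Lindel\"of tube $U\supset J$, observes that $U-J$ has at most countably many components, joins base points of these components by paths inside the connected set $M-J$, engulfs those paths in a connected Lindel\"of open $L\subset M-J$, and notes that $(U\cup L)-J$ is then connected, so $U\cup L$ is a non-dichotomic Lindel\"of subsurface. You instead localise the obstruction homologically: since $M-J$ is connected, $H_1(M)\to H_1(M,M-J)$ is onto, and by excision the target is the cyclic group $H_1(T,T-J)\cong{\Bbb Z}$ or ${\Bbb Z}/2{\Bbb Z}$, so finitely many cycles suffice to hit it; adjoining their (compact) carriers to the tube produces a Lindel\"of $N$ with $H_1(N)\to H_1(N,N-J)$ onto, whence $\widetilde H_0(N-J)=0$. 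This is a nice observation precisely because the paper's own footnote laments that Lemma~\ref{five_lemma}~(ii) cannot be used here (surjectivity of $H_1(N)\to H_1(M)$ is hopeless); you show that the five-lemma-style transfer does work once one aims at the small relative group rather than at $H_1(M)$. The trade-off: the paper's argument needs only path-connectedness and the countability of components of a second-countable set, while yours needs tubular neighbourhoods, excision and the computation of $H_1(T,T-J)$, but in exchange adjoins only finitely many cycles instead of countably many paths and clarifies the homological mechanism.

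One small point to tighten: as written, a singular $1$-cycle may have a disconnected carrier, and a piece of it could land in a component of $T\cup(\mbox{charts})$ other than the one containing $T$, in which case $N$ would miss it and $H_1(N)\to H_1(M,M-J)$ need not be onto. This is harmless: since $H_1(M,M-J)$ is cyclic you may take a single \emph{loop} representative, and any cycle with non-zero class in $H_1(M,M-J)$ must meet $J$ (a cycle carried by $M-J$ dies in the relative group by exactness), so the loop automatically lies in the component of $T$; alternatively, adjoin finitely many arcs connecting the covering charts to $T$.
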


\begin{proof}\footnote{Unfortunately
one cannot take advantage of
Lemma~\ref{five_lemma} (ii).} This argument
has some variants
depending on the amount of geometric topology
inferred [we
bracket the more geometric variants]. Assume by contradiction
the existence of a Jordan curve $J\subset M$ in the surface,
$M$,  such that $M-J$ is connected. Choose $U$ a Lindel\"of
subsurface with $U\supset J$. [We could take for $U$ a tube
around $J$.] The set $U-J$ has at most countably many
components $U_i$, $i=1,2,\dots$. [If $U$ is a tube then
exactly two components.] Pick a point $x_i\in U_i$ in each
component. Consider the countable set $C=\{x_1,x_2, \dots\}$
in the (connected) manifold $M-J$. By arc-wise connectivity,
there is pathes $c_i$ from $x_1$ to $x_i$. Covering by charts
the union of those pathes, $C$ can be engulfed in  a connected
Lindel\"of open set $L\subset M-J$. [One could take for $L$ a
chart (taking advantage of the phagocytosis lemma).]
The set $U\cup L$
is open and Lindel\"of. We have $(U\cup L)-J=(U-J) \cup L
=\bigcup_{i\in {\Bbb N}} U_i \cup L$. Regarding this union as
$L$ plus the sets $U_i$ (each meeting $L$), it follows (by
general topology) that this ``bouquet-like'' union is
connected. This contradicts
our assumption of ``Lindel\"of dichotomy''. [In the variant
where $U$ is a tube,
we also find $L\subset M-J$ a Lindel\"of connected surface
containing $\{x_1,x_2\}$: cover by charts (of $M-J$) the path
$c_2\subset M-J$ (joining $x_1$ to $x_2$) while keeping only
the component of this union of charts which contains $c_2$.
Again $(U\cup L)-J=(U-J) \cup L =U_1\cup U_2 \cup L$, which is
connected,
as the union of two connected sets
with a common intersection.]
\end{proof}

\begin{cor}\label{Pruefer_dichotomic}
The doubled Pr\"ufer surface $2P$ is dichotomic.
\end{cor}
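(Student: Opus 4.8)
The plan is to reduce to the metric case via Lemma~\ref{DICH-Lindeloef-approx}, whose hypothesis is precisely that \emph{every} Lindel\"of subsurface of $2P$ is dichotomic. A Lindel\"of surface is second countable, hence metric (Urysohn), so this really moves the whole question into the classical realm. There I would argue that any Lindel\"of subsurface $W\subset 2P$ is \emph{planar} (a union of open subsets of ${\Bbb S}^2$), and that planar surfaces are dichotomic: if $J$ is an embedded circle in an open set $\Omega\subset{\Bbb S}^2$, the ordinary Jordan curve theorem splits ${\Bbb S}^2$ into two open discs, each of which meets $\Omega$ (points of $\Omega$ lie on both sides of $J$ near $J$), and $\Omega-J$ cannot reconnect them without re-meeting $J$; hence $\Omega-J$ is disconnected.

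To establish the planarity of $W$ it is enough, since $W$ is an orientable metric surface, to show that every compact subsurface $K\subset W$ is planar. Here I would use the anatomy of $2P$: it is $H_1\cup H_2\cup\partial$, where $H_1,H_2$ are the two copies of the (open $2$-cell) interior of the bordered Pr\"ufer surface $P$, and the seam $\partial$ is a topological sum, indexed by $a\in{\Bbb R}$, of lines $\ell_a$ — the boundary components of $P$ — each a properly embedded closed $1$-submanifold of $2P$ admitting small ``tubular'' neighbourhoods $T_a$ homeomorphic to an open disc that straddle $H_1$ and $H_2$, meeting each of them in an open half-disc, with the $T_a$ pairwise disjoint. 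The one point-set input about the Pr\"ufer topology is that \emph{no sequence of ideal points rooted at distinct points of $\{y=0\}$ can converge}; this makes $\partial$ closed in $2P$ and forces any compact $K\subset 2P$ to meet only finitely many of the lines $\ell_a$, say $\ell_{a_1},\dots,\ell_{a_m}$. Then $K\subset H_1\cup H_2\cup(T_{a_1}\cup\dots\cup T_{a_m})$, and this open subsurface is planar: starting from $H_1\cong{\Bbb R}^2$ one successively attaches the discs $T_{a_i}$ along pairwise disjoint simply-connected open sets (keeping it a $2$-cell, by iterated van Kampen) and then glues on $H_2$ along finitely many disjoint discs, producing a sphere with finitely many holes — manifestly planar. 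Hence $K$ is a compact subsurface of a planar surface, so planar; therefore $W$ is planar, hence dichotomic, and Lemma~\ref{DICH-Lindeloef-approx} concludes that $2P$ is dichotomic.

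The step I expect to be the real (if modest) obstacle is the point-set inspection just highlighted — that distinct roots produce non-converging families of ideal points, whence the closedness of $\partial$ and the ``every compactum meets only finitely many seams'' principle. Everything else is soft: the surface-topology fact that a sphere-with-holes (and any open subsurface of it) is dichotomic, and the transfinite bookkeeping, which is entirely outsourced to Lemma~\ref{DICH-Lindeloef-approx} (note that one cannot instead lean on Lemma~\ref{five_lemma}~(ii), since $H_1(W)\to H_1(2P)$ is far from surjective). It is worth recording that this is compatible with $\pi_1(2P)$ being enormous (free on a continuum of generators): dichotomy of $2P$ is not a shadow of simple-connectivity but of $2P$ being ``genus zero with a continuum of ends'', so that its metric pieces are planar even though $2P$ itself is not metric.
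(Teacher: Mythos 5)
Your proposal is correct in substance and follows the same overall route as the paper: reduce via Lemma~\ref{DICH-Lindeloef-approx} to Lindel\"of (hence metric) subsurfaces, show these are planar, and conclude with the classical Jordan curve theorem plus heredity (Lemma~\ref{DICH-heredity}). The only divergence is in how planarity of the Lindel\"of piece is certified. The paper covers $2P$ by the open sets $B_x$ (both half-planes plus the pencil of rays rooted at $x$), extracts a countable subcover containing $L$, and observes that $\bigcup_{x\in C}B_x$ embeds in ${\Bbb R}^2$ (it is the doubling of the half-plane Pr\"uferised along the countable set $C$ only, i.e. ${\Bbb S}^2$ minus a closed subset of a circle). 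You instead exhaust by compacta and use the discreteness of the family of seams to confine each compactum to a finite union $H_1\cup H_2\cup T_{a_1}\cup\dots\cup T_{a_m}$; this works too, at the cost of invoking the Ker\'ekj\'art\'o-type fact that planarity of an open orientable surface is detected on its compact subsurfaces, which the paper's direct embedding avoids.

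Two repairable slips. First, your assertion that the tubes $T_a$ can be chosen pairwise disjoint for \emph{all} $a\in{\Bbb R}$ is false: each $T_a\cap H_1$ must contain a truncated wedge about the vertical ray at $(a,0)$, hence a Euclidean disc centred at $(a,c)$ for all small $c$, and a pigeonhole over $a$ (uniformising the truncation height and the disc radius on an uncountable set of roots) forces two such wedges with nearby roots to overlap. This impossibility is essentially why $2P$ is non-metric, so it is worth not asserting its negation; but your argument only ever needs disjoint tubes around the \emph{finitely many} seams met by a given compactum, together with the discreteness of the family $\{\ell_a\}$ (each point of $2P$ has a neighbourhood meeting at most one seam), and both of these hold. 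Second, ``two planes glued along $m$ disjoint discs is manifestly a sphere with holes'' deserves a word of justification: glued along boundary circles the result would have genus $m-1$, so one must use that the gluing here is along open charts. The clean picture is that $H_1\cup\ell_{a_1}\cup\dots\cup\ell_{a_m}$ is a closed disc whose boundary consists of $m$ disjoint open arcs, and your $U$ is its double, i.e. ${\Bbb S}^2$ minus $m$ closed arcs; alternatively, the intersection form on $H_1(U)\cong{\Bbb Z}^{m-1}$ vanishes because the generating loops admit disjoint representatives in each of the two planes. With these repairs the proof is complete.
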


\begin{proof} By Lemma~\ref{DICH-Lindeloef-approx} it is
enough to show that each Lindel\"of subsurface $L$ of $2P$ is
dichotomic. Consider the open covering of $2P$ by the sets
$B_x$ consisting of both half-planes plus the pencil of rays
through $x\in {\Bbb R}$. By  Lindel\"ofness of $L$, one may
extract a countable subcover $B:=\bigcup_{x\in C} B_x\supset
L$.
It is
plain that $B$ embeds in the plane ${\Bbb R}^2$,
and the dichotomy of $L$ follows from
Lemma~\ref{DICH-heredity} (plus the classical Jordan curve
theorem).
\end{proof}

\begin{rem} {\rm Such routine separation arguments
are of course very easy, 
and overlap
a remark  by R.\,L. Moore, as reported
by F. Burton Jones \cite[p.\,573, Sec.\,4,
Parag.\,2]{Jones_F._Burton_1966},
where it is observed that the Moore surface $M$ is
``globally'' Jordan, i.e. dichotomic. (This can be checked
along the same
lines as what we  did for $2P$.)}
\end{rem}

\subsection{Surfaces with prescribed topology and
dynamics}\label{Geography}

As we saw ``simple'' topology
oft  impedes
``complicated'' dynamics (e.g., dichotomy obstructs
transitivity). This section works out the experimental side of
the various topologico-dynamical interactions.
As usual, examples are intended to test the
exhaustiveness of the theoretical obstructions listed so far.
Arguably, any knowledge of the world (resp. theory) starts and
ends with experiments (resp. examples).
%
%
%
An oblique hope is that
a blend of topologico-dynamical prescriptions singles out
subclasses in the jungle of (non-metric) surfaces, where a
classification looks more tractable. This scenario will rarely
happen, yet when, in the nihilist art-form of an empty-set
{\it classifiant}.
%
%
%
%
%
So we start by a selection of:

\noindent $\bullet$ {\it Topological attributes} including:
metric, separable, simply-connected, dichotomic; {\it versus},

\noindent $\bullet$ {\it Dynamical attributes} including:
minimal,
quasi-minimal\footnote{A flow is {\it quasi-minimal} if it has
a finite number of fixed-points, while all non-stationary
orbits are dense.}, non-singular, transitive.

Below, we
have pictured
a {\it Venn diagram} showing the mutual disposition of these
subclasses inside the universe of all Hausdorff surfaces. We
shall primarily ask  for representatives in each subclasses,
and secondarily for a classification if possible. Some
accompanying comments on this diagram are in order:

\begin{figure}[h]
\centering
    \epsfig{figure=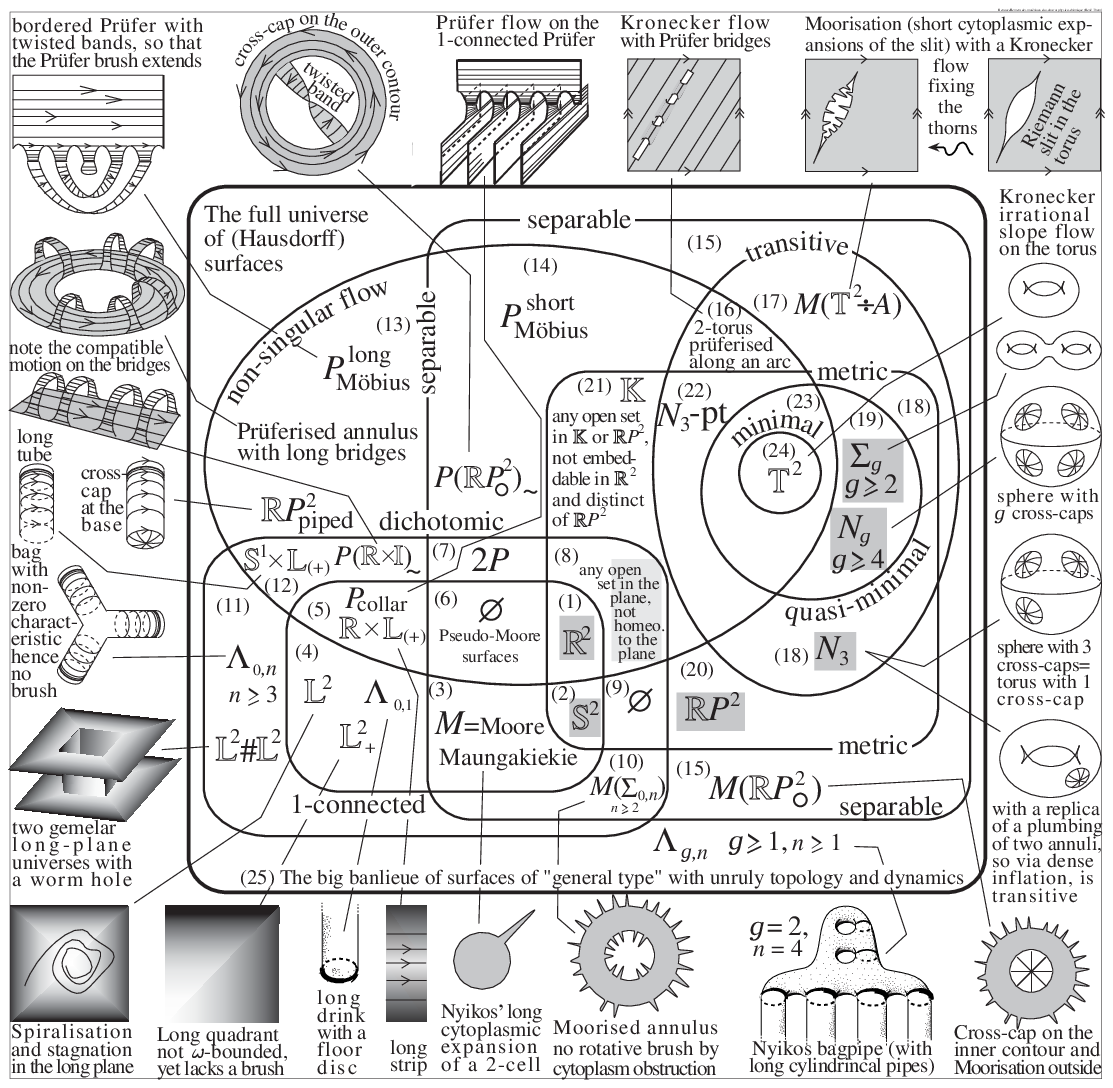,width=172mm
    }
\end{figure}

(a) ``Rounded rectangles'' correspond to topological while
``ovals'' to dynamical attributes.


(b) For
identifying specimens
the following symbolism is employed: $P( \cdot )$ denotes the
Pr\"uferisation operator, $M(\cdot)$ the Moorisation. (This is
merely a matter of globalising the classic constructions of
Pr\"ufer and Moore, cf. eventually
Definition~\ref{Pruefisation-Moorisation}.) Thus, the bordered
Pr\"ufer $P$ is $P({\Bbb H})$ the Pr\"uferisation of the upper
half-plane ${\Bbb H}={\Bbb R} \times {\Bbb R}_{\ge 0}$, while
${M}$ the classic Moore surface is $M({\Bbb H})$. A ``circle
index'' means excising a $2$-disc. A ``tilde index'' means
that some identifications are made (usually on the boundary of
a Pr\"uferisation).

(c)
Notation: ${\Bbb R}$
the real line, ${\Bbb S}^1$ the circle, ${\Bbb I}=[0,1]$ the
interval, ${\Bbb L}_+=]0,\omega_1[$ and ${\Bbb L}_{\ge
0}=[0,\omega_1[$ are the open (resp. closed) long rays, ${\Bbb
L}$ the long line, ${\Bbb S}^2$ the 2-sphere, ${\Bbb
T}^2={\Bbb S}^1\times {\Bbb S}^1$ the 2-torus,
$\Sigma_g$
the closed orientable surface of genus $g$, $\Sigma_{g,n}$ the
same with $n$ holes ($2$-discs excisions), $N_g$ the closed
non-orientable surface of genus~$g$ (defined in accordance
with Riemann as the maximal number of disjoint non-dividing
circles),
thus $N_g$ is the sphere with $g$ cross-caps (hence
$\chi=2-g$). In particular, ${\Bbb R}P^2=N_1$ is the {\it
projective plane}, ${\Bbb K}=N_2$ is
the {\it Klein bottle} (non-orientable closed surface with
$\chi=0$).
Finally $\Lambda_{g,n}$ denotes the genus $g$ surface with $n$
pipes modelled on the cylinder ${\Bbb S}^1 \times {\Bbb
L}_{\ge 0}$.

(d) A shaded ``manifold symbol'' refers to the issue that the
given manifold(s) turns out to be the {\it unique}
representative(s) in the given class. The empty set symbol
$\varnothing$ indicates a class lacking any representative
(we exclude the empty set to be a genuine surface).

\smallskip
The uniqueness of ${\Bbb S}^2$ and ${\Bbb R}^2$ in their
respective classes follows from the classification of
simply-connected  metric surfaces.
%
Recall the following key result (incarnating an advanced form
of Poincar\'e-Bendixson theory beyond dichotomy):

\begin{lemma}\label{Markley:prop} Among closed
surfaces only ${\Bbb S}^2, {\Bbb R}P^2$ and ${\Bbb K}$ (Klein
bottle) are intransitive.
\end{lemma}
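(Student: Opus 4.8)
The plan is to establish that a closed surface is transitive precisely when it is not one of the three listed surfaces by separating the two implications: an \emph{intransitivity} half (each of ${\Bbb S}^2$, ${\Bbb R}P^2$, ${\Bbb K}$ carries no dense orbit) and a \emph{transitivity} half (every remaining closed surface admits a dense orbit), the latter being essentially a display of examples combined with the classification of closed surfaces.

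\textbf{Intransitivity.} The sphere ${\Bbb S}^2$ is dichotomic by the classical Jordan curve theorem, so Lemma~\ref{dicho_implies_intransitive} applies verbatim. The projective plane and the Klein bottle are \emph{not} dichotomic---each carries a one-sided, hence non-separating, simple closed curve---so one cannot invoke Lemma~\ref{dicho_implies_intransitive} directly; instead one appeals to the full Poincar\'e--Bendixson theorem, which holds classically on ${\Bbb R}P^2$ and---this is the substantial external input---on the Klein bottle ${\Bbb K}$ by the theorem of Markley. Granting this, a putative dense orbit would have $\omega$-limit set equal to the entire surface, whereas Poincar\'e--Bendixson forces it to be a fixed point, a periodic orbit, or a polycycle (a finite union of fixed points and connecting trajectories), none of which is a $2$-manifold---a contradiction. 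For ${\Bbb R}P^2$ an alternative route is to lift the flow to the orientation double cover ${\Bbb S}^2$; here a little care is needed, since the deck involution $\sigma$ only yields $\overline{{\Bbb R}\tilde x}\cup\sigma(\overline{{\Bbb R}\tilde x})={\Bbb S}^2$ rather than $\overline{{\Bbb R}\tilde x}={\Bbb S}^2$, so the conclusion must be extracted from the structure of $\omega$-limit sets on ${\Bbb S}^2$ (no orbit closure is a $2$-manifold) and not from bare intransitivity.

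\textbf{Transitivity.} It remains to produce transitive flows on the torus, on the closed orientable surfaces $\Sigma_g$ with $g\ge 2$, and on the closed non-orientable surfaces $N_g$ with $g\ge 3$. The torus ${\Bbb T}^2$ carries the irrational Kronecker flow, which is minimal, \emph{a fortiori} transitive. For $g\ge 2$ one uses the classical construction of a topologically transitive flow with finitely many saddle-type singular points---for instance the vertical flow of a translation surface whose vertical foliation is minimal (the suspension of a minimal interval-exchange transformation), which is minimal off its singular set and hence has a dense orbit. For $N_g$ with $g\ge 3$ (so $\chi=2-g<0$) the analogous construction via half-translation structures (quadratic differentials), or a transfer through a suitable orientation cover, delivers a transitive flow. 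Since $\Sigma_0={\Bbb S}^2$, $N_1={\Bbb R}P^2$ and $N_2={\Bbb K}$ are exactly the surfaces excluded above, the classification of closed surfaces completes the argument.

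\textbf{Main obstacle.} The genuine difficulty is external to the present circle of techniques: the Klein-bottle case rests entirely on Markley's non-trivial Poincar\'e--Bendixson theorem, and the existence of transitive flows on surfaces of negative Euler characteristic, although classical, is not elementary---it is the soft, topological end of interval-exchange/translation-surface dynamics. What is left to us is the bookkeeping: matching the three exceptions against the surface classification, and the mild care in the double-cover reduction for ${\Bbb R}P^2$ noted above.
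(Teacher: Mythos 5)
Your decomposition --- the three exceptional surfaces shown intransitive via Poincar\'e--Bendixson plus Markley's theorem for the Klein bottle, all remaining closed surfaces shown transitive by exhibiting a dense orbit and invoking the classification --- is exactly the paper's, and your intransitivity half matches the paper's almost verbatim (the paper likewise lifts to the universal cover for $\mathbb{R}P^2$ and cites Markley 1969, with Guti\'errez 1978 as the streamlined reference, for $\mathbb{K}$). Your extra care in the double-cover reduction --- that one only obtains $\overline{\mathbb{R}\tilde x}\cup\sigma(\overline{\mathbb{R}\tilde x})=\mathbb{S}^2$ and must then use that orbit closures on $\mathbb{S}^2$ are nowhere dense (properness of orbits plus the one-dimensionality of limit sets) --- is a genuine subtlety the paper glosses over, and is worth recording. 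Where you diverge is the transitivity half. The paper observes that $N_3\approx\mathbb{T}^2\#\mathbb{R}P^2$, so every closed surface outside the three exceptions contains a plumbing of two open annuli (a punctured torus); it then inflates this open set to a dense subset and extends a Kronecker flow to the ambient surface. That argument is uniform across the orientable and non-orientable cases, but it leans on the inflation principle, which the paper itself only states as a conjecture (hedging by also citing the surgery constructions of Peixoto and Blohin). Your route via minimal interval exchanges and translation surfaces handles $\Sigma_g$, $g\ge 2$, unconditionally and is on firmer ground there; but your treatment of $N_g$, $g\ge 3$, is the thin spot --- quadratic-differential foliations need not be orientable, and flows do not transfer along orientation covers in either direction without additional equivariance, so for the non-orientable surfaces you are in effect falling back on the same classical citations (Peixoto, Blohin, Guti\'errez) that the paper uses. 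Net: same skeleton, with your version more self-contained on the orientable side and the paper's more uniform, but conditional, on the non-orientable side.
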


\begin{proof} The intransitivity of
${\Bbb S}^2$ and ${\Bbb R}P^2$ follows from
Poincar\'e-Bendixson (after lifting the flow to the universal
covering in the second case). The intransitivity of
${\Bbb K}$ was first established by Markley 1969
\cite{Markley_1969} (independently Aranson 1969),
yet the argument of Guti\'errez 1978~\cite[Thm~2,
p.\,314--315]{Gutierrez_1978_TAMS} seems to be
inter\-continentally recognised as the ultimate
simplification. The transitivity of all remaining closed
surfaces is observed in Peixoto
1962~\cite[p.\,113]{Peixoto_1962},
also Blohin 1972~\cite{Blohin_1972}. Beside the meticulous
surgeries used by those
authors, it is
pleasant to
recall the
following cruder approach.
Since a cross-cap diminishes $\chi$ by one unit, we have the
relation $N_3={\Bbb R}P^2\#{\Bbb R}P^2 \#{\Bbb R}P^2\approx
{\Bbb T}^2 \# {\Bbb R}P^2$. Thus all other closed surfaces
contain a replica of the plumbing of two open annuli (i.e. a
punctured torus). Granting some geometric intuition,
this
open set can be inflated until to be dense.
(Section~\ref{Inflation:sec} below discusses the
issue that such dense inflations of open non-void sets
might always be  possible in separable manifolds.) By
extending a Kronecker flow on this inflated punctured torus to
the
ambient closed surface one obtains the desired transitive
flow. (This mechanism is further discussed below in
Section~\ref{const_trans_flows:gnal_recipe:sec}.)
\end{proof}

The uniqueness of ${\Bbb R}P^2$ in its class is now easy. We
seek after surfaces without brush but metric. Since open
metric surfaces support brushes\footnote{In fact this result
holds in any dimension, and even in the topological
category~(Lemma~\ref{Morse-Thom}). In the $2$-dimensional case
the argument simplifies: introduce a smooth structure, find a
Morse function without critical points (if any kill them by
excising an arc starting from the singularity and running to
infinity), and conclude by taking its gradient flow.}, our
surface must be compact. Intransitivity leaves
{\it only} the three possibilities listed in
Lemma~\ref{Markley:prop}. Non-dichotomy excludes ${\Bbb S}^2$,
while ${\Bbb K}$ is ruled out by the ``no brush'' condition,
leaving
${\Bbb R}P^2$ as the unique
solution.

The emptiness of the class lying between ${\Bbb S}^2$ and
${\Bbb R}P^2$ (label (9) in our Venn diagram) is argued
similarly. Such a surface must be compact (else it has a
brush).
Yet, the only closed dichotomic
surface is ${\Bbb S}^2$.

We now embark in a more systematic exploration of our diagram
by starting from the ${\Bbb R}^2$ region, while spiraling
clockwise (mimicking the numbering of parisian
arrondissements):

\smallskip
(1) The first arrondissement is chosen as
the one of ${\Bbb R}^2$, for the plane is not only locally but
globally Euclidean. {\it The plane is characterised as the
unique metric $1$-connected surface bearing a brush.} Its only
drawback is a certain dynamical poorness (transitivity is
impeded by its dichotomy).

\smallskip
(2) Moving below we find ${\Bbb S}^2$,
which
is {\it the unique metric $1$-connected surface lacking a
brush.} Since the neighbouring class (9) is empty the
statement can be sharpened into: {\it The $2$-sphere is the
unique dichotomic metric surface lacking a brush.}

\smallskip
(3) On the left of ${\Bbb S}^2$, we encounter the Moore
surface ${ M}$ and the {\it Maungakiekie} (i.e., one long
cytoplasmic expansion of the $2$-cell, recall
(\ref{cytoplasmic_expansions})). (Both lack a brush as they
have a CTD, cf. Theorem~\ref{CTD:thm}.)

\smallskip
(4) Still more to the left we have ${\Bbb L}^2$ and
$\Lambda_{0,1}$.  The long plane ${\Bbb L}^2$ has no foliation
of dimension $1$ by short leaves, while $\Lambda_{0,1}$ has no
foliation at all \cite{BGG}, so both do not accept a brush.
This follows also from
Theorem~\ref{omega-bounded_Hairy-ball-thm} or
Corollary~\ref{fppf-Ln}. In view of Nyikos
\cite[p.\,669]{Nyikos84}
this class contains a bewildering variety of
specimens of cardinality $2^{\aleph_1}$ of which the two above
are just the most civilised examples. This class also includes
non-$\omega$-bounded examples, e.g. ${\Bbb L}_{+}^2$
(Proposition~\ref{ad_hoc-long-quadrant}).

\smallskip
(5) Moving up, we meet ${\Bbb R}\times {\Bbb L}_{(+)}$ (the
parenthetical ``plus'' means that we may take either the long
line ${\Bbb L}$ or the long ray ${\Bbb L}_+$) and also $P_{\rm
collar}:=P\cup (\partial P \times {\Bbb R}_{\ge 0})$,
i.e. the original Pr\"ufer surface which has a brush
(\ref{Pruefer_flow}). (In view of
Theorem~\ref{omega-bounded_Hairy-ball-thm}, there is no
$\omega$-bounded examples in this class.)

\smallskip
(6) This is an empty region corresponding to the pseudo-Moore
problem (solved via Theorem~\ref{hairy_ball_thm:separable}.)

\smallskip
(7) Here we have $2P$ the doubled Pr\"ufer surface which has a
brush
(\ref{Pruefer_flow}) and is dichotomic
(\ref{Pruefer_dichotomic}).
What else? (Try puncturing.)

\smallskip
(8) This class contains for instance the punctured plane
${\Bbb R}^2-\{0\}$ and more generally any open set of the
plane (topologically distinct from the plane). This is a
complete list of representatives due to the classification of
(dichotomic) metric surfaces (compare e.g.,
Ker\'ekj\'art\'o \cite{Kerekjarto_1923}).

\smallskip
(9) This class is empty, as already argued.

\smallskip
(10) Here we have $M(\Sigma_{0,n})$ for $n\ge 2$ (recall that
$\Sigma_{g,n}$ denotes the compact orientable surface of genus
$g$ with $n$ boundary components and that $M$ is the
Moorisation operation). For $n=2$ this surface is an annulus
$\Sigma_{0,2}=S^1 \times [0,1]$ Moorised along its boundary.
Since the genus $g$ is zero these surfaces are dichotomic
(apply Lemma~\ref{DICH-Lindeloef-approx}), and they lack a
brush (as they have a CTD). [Note that $M(\Sigma_{0,1})$
belongs to arrondissement (3) being perhaps homeomorphic to
the
classic Moore surface ${ M}=M({\Bbb H})$, where ${\Bbb
H}={\Bbb R}\times{\Bbb R}_{\ge 0}$.]

\smallskip
(11) Now we have the surfaces $\Lambda_{0,n}$ for $n\ge 3$.
These surfaces having genus $0$ are dichotomic (again
Lemma~\ref{DICH-Lindeloef-approx}), with non-trivial $\pi_1$
(as soon as $n\ge 2$) and finally lack a brush (either because
in \cite{BGG}
it was shown that $\Lambda_{g,n}$ has a foliation only when
$(g,n)$ is $(1,0)$ or $(0,2)$ or alternatively by
Proposition~\ref{Lefschetz-fppf}). This class also contains
the surface ${\Bbb L}^2 \# {\Bbb L}^2$, as discussed in
Section~\ref{ad_hoc:sec}.

\smallskip
(12) In this class we have ${\Bbb S}^1\times {\Bbb L}_{(+)}$.
What else? Again puncturing works.
More sophisticated examples are obtainable by ``rolling
around'' the tangent bundle of a smooth structure on ${\Bbb
L}_{(+)}$. If $L$ is a non-metric smooth 1-manifold, remove
from its tangent bundle $TL$ the zero-section to get two
components
$TL^+$, $TL^{-}$. Scalar multiplication by a positive
real $\lambda\neq 1$ on $TL^+$ yields a ${\Bbb Z}$-action on
$TL^+$, whose quotient $S:=TL^+ / {\Bbb Z}$ is a circle-bundle
over $L$. We have then a brush $f\colon {\Bbb R} \times S \to
S$ induced by $\phi:{\Bbb R}\times TL^+ \to TL^+$ given by
$\phi(t, v)=\lambda^t v$,
which induces an action of the circle ${\Bbb R}/ {\Bbb Z}$ on
$S$.
By
Riemannian geometry
the tangent
bundle to a smooth non-metric manifold cannot be trivial
\cite{Morrow_1969},
adumbrating that the surfaces $S$ are not homeomorphic to the
trivial circle bundle over $L$. Another example is the following: start
with a strip ${\Bbb R}\times [0,1]$ and Pr\"uferise its
boundary, and then glue long bands to link boundary components
with the same first coordinate. This surface, denoted $P({\Bbb
R}\times{\Bbb I})_{\sim}$, is not separable (because of the
``longness'' of the bands), is dichotomic
(Lemma~\ref{DICH-Lindeloef-approx}) (and incidentally with a
$\pi_1$ bigger than the previous ones).

\smallskip
(13) Here we give $3$ examples: (1) start from $P$ the
bordered Pr\"ufer surface $P=({\Bbb R}\times {\Bbb R}_{>0})
\sqcup \bigsqcup_{x\in {\Bbb R} } R_x$ endowed with its
natural brush (windscreen wiper flow of
Proposition~\ref{Pruefer_flow}). For each non-zero real $x\in
{\Bbb R}-\{ 0\}$, we glue the boundary components $R_x$ of $P$
with the opposite $R_{-x}$ in a way consistent with the flow.
This produces many embedded copies of  the M\"obius band, so
denote this surface by $P_{\rm M\ddot{o}bius}^{\rm short}$. To
make the result non-separable we attach
``long'' M\"obius bands (e.g. by first adding to $P$ a closed
collar $\partial P \times [0,1]$ and then performing the
``flow compatible'' identifications).
Finally, aggregate an open collar to the ``central'' boundary
component $R_0$. The resulting surface has a brush, is not
separable and not dichotomic (it contains a M\"obius band),
hence non-orientable. (2) To get an orientable example
Pr\"uferise the annulus and link (radially) related components
by ``bridges'' homeomorphic to ${\Bbb R} \times [0,1]$. (3)
Yet another example is the surface deduced from ${\Bbb
S}^1\times {\Bbb L}_{\ge 0}$ by identifying via the antipodal
map (cross-capping) the boundary. (This is ${\Bbb R}P^2$ with
a long pipe of the cylinder type.) This surface, call it
${\Bbb R}P^2_{\rm piped}$, has a brush (in fact a circle
action) and is not dichotomic (the image of the boundary in
the quotient is a non-dividing circle). [From its description
as ${\Bbb R}P^2$ with a long (cylindrical) pipe, this surface
can be shown to be
{\it universally intransitive}, i.e. none of its open subset
is transitive.]

\smallskip
(14) Here it is a bit more difficult to find examples. We will
turn back to this after studying case (15).
A natural candidate could be the surface $P_{\rm
M\ddot{o}bius}^{\rm short}$ constructed in the first part of
(13); yet being separable, it is difficult to ensure
intransitivity.
(However using the argument of (14bis) below, intransitivity
will be clear.) [Another candidate could be a Pr\"uferised
annulus with short bridges, yet as this contain a
replica of two plumbed annuli it is more likely that this
example is transitive so belongs to (16).]

\smallskip
(15)
Remove from ${\Bbb R}P^2$ the interior of a closed 2-disc (to
get a M\"obius band)
and Moorise the boundary to obtain $M({\Bbb R}P^2_{\circ})$
(the ``circle index'' is a disc excision).
This surface (call it $S$) has a CTD, hence lacks a brush and
is clearly separable. For
belonging to class (15) it remains to prove intransitivity. By
contradiction, let $f\colon{\Bbb R} \times S \to S$ be a
transitive flow. The surface $S$ has a decomposition $S=U
\sqcup \bigsqcup_{x\in S^1} T_x$, where $U$ is an open
M\"obius band (=punctured ${\Bbb R}P^2$), and each $T_x$ is
homeomorphic to ${\Bbb R}_{\ge 0}$. Let $x\in S$ be a point
with dense orbit ${\Bbb R}x=f({\Bbb R} \times \{x \})$, and
choose $V$ a chart around $x$. Its orbit ${\Bbb R}V:=f({\Bbb
R} \times V)=\bigcup_{t\in {\Bbb R}} f_{t} (V)$ is open,
Lindel\"of and transitive under the restricted flow $f\colon
{\Bbb R} \times {\Bbb R}V \to {\Bbb R}V$. The open cover of
$S$ by the sets $U_x:=U \cup T_x$ shows that ${\Bbb R}V$ is
contained in a countable union $\bigcup_{x\in C} U_x$, where
$C \subset S^1$ is countable). It is easy to show that
$\bigcup_{x\in C} U_x$ which is the core plus countably many
thorns
remains homeomorphic to the original core
$U$, which is an (open) M\"obius band
(apply Morton Brown's theorem). Now, the transitivity of
${\Bbb R}V$
violates the universal intransitivity of ${\Bbb R}P^2$, i.e.
all its open subsets are intransitive (cf.
Lemma~\ref{intransitivity} below).

\smallskip
(14bis) [(14) revisited!] Separability makes hard to ensure
intransitivity, yet we use the same trick as in (15) by taking
advantage of the universal intransitivity of ${\Bbb R}P^2$.
Thus, start with the projective plane visualised as a closed
2-disc modulo antipodes on the boundary. Remove the interior
of a central disc to get $W$ a surface with one boundary
circle (a M\"obius band). Consider the flow given by a
rotational motion on this $W$ (annulus with external circle
identified by antipodes). Pr\"uferise the intern circle, and
consider a Pr\"ufer flow. Then glue diametrically opposite
boundaries in the way prescribed by the flow to obtain our
surface $S$. (By construction it has a brush and is
separable.) To check intransitivity, we argue by contradiction
as before. Choose a chart $V$ around a point $x$ with a dense
orbit, and note that  $f({\Bbb R} \times V)$ is {\it a
fortiori} transitive and Lindel\"of, hence contained in the
``core'' ${\rm int}(W)$ plus countably many ``bridges''.
Denote by $S_\omega$ this ``countable approximation'' of $S$.
Attaching a single
twisted band amounts to a single puncturing (if there were no
twist this would produce {\it two} punctures!). [This can also
be checked either by cut-and-past or
via the classification
of compact surfaces, after
aggregating the natural boundary.] Arguing inductively
$S_{\omega}$ is in fact homeomorphic to a $\omega$ times
punctured M\"obius band;  against the universal intransitivity
of ${\Bbb R}P^2$.

\smallskip
(16) A simple example is the 2-torus Pr\"uferised along an
arc. This surface has a transitive brush deduced  from a
windscreen wiper motion (as suggested in
Example~\ref{Kronecker-Pruefer}). Yet, we promised a formal
treatment of (\ref{Kronecker-Pruefer}) and this relies on
Lemma~\ref{Pruefer_flow:generalised:lemma} below. We apply the
latter to $W$ the result of a {\it Riemann slit} along a piece
of orbit of a Kronecker flow ({\it slitting} merely
amounts to duplicate each interior point  of the segment). We
take care of removing the two extremities of the ``slit''.
Thus, $W$ is non-compact and with {\it two}
boundary-components (``lips''). Equip $W$ with the Kronecker
flow suitably slowed down by multiplying its velocity vector
field by a smooth positive function vanishing precisely on the
two lips. Further, arrange a linear decay (of speed) when
approaching the ``lips'', then case (1) of
Lemma~\ref{Pruefer_flow:generalised:lemma} gives the required
flow (after piecing together the boundaries of the
Pr\"uferisation $P(W)$ lying ``opposite'', i.e., those which
were indexed by the same point prior to the slit).

\smallskip
(17) 
Again we just
show an example:
start with the torus ${\Bbb T}^2={\Bbb R}^2/ {\Bbb Z}^2$ with
an irrational flow $f$.
Take a portion of trajectory $A=f([t_1,t_2]\times\{x\})$ (say
contained in the fundamental domain). Slit (\`a la Riemann)
the torus along this arc $A$ to get a bordered surface
$W={\Bbb T^2} \div A$ (imagine again that points of the
interior of the arc are duplicated).
Then  Pr\"uferise $W$ to get $P(W)$ and finally Moorise to get
the surface $M(W)$ which
lacks a brush because it has a CTD. It remains to find a
transitive flow. This involves the idea that if one alter the
Pr\"ufer flow (which has a linear speed decay when approaching
the
boundary), into one having a quadratic speed decay one obtains
a quadratic Pr\"ufer flow fixing point-wise the boundary of
$P$ and so descends on the Moore surface by fixing the
``thorns'' (in the classic case an explicit formula is
$f(t,(x,y))=(x+ty^2,y)$). In view of the differential
geometric character of the Pr\"ufer construction (think with
``rays''), this construction clearly globalises. The following
two items should throw more light on this aspect:

\begin{defn} (Pr\"uferisation--Moorisation)\label{Pruefisation-Moorisation}
{\rm Given a bordered metric surface $W$ (with a smooth
structure and a Riemannian metric). One defines its {\it
Pr\"uferisation} $P(W)={\rm int} W \sqcup \bigsqcup_{x\in
\partial W} R_x$  by aggregating to the interior of
$W$, all the ``interior'' rays in the tangent $2$-planes, $T_x
W$, with $x\in \partial W$.
A topology on $P(W)$ is introduced by mimicking the Pr\"ufer
topology,
making $P(W)$ into a bordered non-metric surface whose
boundary components are the sets $R_x$ each homeomorphic to
${\Bbb R}$.
The {\it Moorisation} $M(W)$ {of  $W$ } refers to the
(boundaryless) surface, quotient of $P(W)$ by self-gluing each
of its boundary-components $R_x$ via
the involution given by reflecting ``rays'' about the ray at
$x$ orthogonal to the boundary.}
\end{defn}

\begin{lemma}\label{Pruefer_flow:generalised:lemma} (Generalised Pr\"ufer flows) Given a smooth
flow $f$ on $W$ fixing point-wise $\partial W$, there is a
canonically
induced ``Pr\"ufer flow'', denoted $P(f)$, on the
Pr\"uferisation $P(W)$.
Two special
cases are of
interest:---{\rm (1)} if the flow $f$ has a linear speed decay
when approaching the boundary then geodesic-rays
undergo a ``windscreen wiper motion'' via $f$, while---{\rm
(2)} if this decay is quadratic then geodesic-rays
deform into parabolas
keeping the same tangent. Thus, in case {\rm (1)} the $P(f)$
has no fixed point (on $\partial P(W)$), while in case {\rm
(2)} all points of $\partial P(W)$ are fixed under $P(f)$.
Case
{\rm (1)} corresponds infinitesimally to
{\rm (\ref{Pruefer_flow})}, while case {\rm (2)} induces a
flow $M(f)$ on the Moorisation $M(W)$ fixing the ``thorns''
point-wise.
\end{lemma}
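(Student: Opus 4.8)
The plan is to define $P(f)$ by the only reasonable formula and then verify it is a flow. First I would note that for $x\in\partial W$ the hypothesis $f_t(x)=x$ turns the differential $A^x_t:=D_xf_t$ into a linear automorphism of the tangent plane $T_xW$; since $f_t$ is a self-diffeomorphism of the bordered surface $W$, this $A^x_t$ preserves the boundary line $T_x(\partial W)$ and the open half-plane of inward vectors, hence permutes the inward rays and induces a self-homeomorphism of $R_x$. I would then set $P(f)_t$ equal to $f_t$ on $\operatorname{int}W$ and equal to $\rho\mapsto A^x_t\rho$ on each ideal line $R_x$. The group law is then essentially automatic: the chain rule $D_x(f_t\circ f_s)=(D_xf_t)(D_xf_s)$ --- legitimate here precisely because $f_s(x)=x$ --- gives $A^x_{t+s}=A^x_tA^x_s$ and $A^x_0=\mathrm{id}$, which together with the flow property of $f$ on the interior yields $P(f)_{t+s}=P(f)_t\circ P(f)_s$ and $P(f)_0=\mathrm{id}$. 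What is left is joint continuity of $P(f)\colon{\Bbb R}\times P(W)\to P(W)$.

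The continuity check is the heart of the matter, modelled on the consistency verification already carried out for the explicit flow in Proposition~\ref{Pruefer_flow}. Continuity at $(t,p)$ with $p\in\operatorname{int}W$ is just continuity of $f$; the only delicate case is a point $(t,\rho)$ with $\rho\in R_x$. As $\operatorname{int}W$ is dense and $P(W)$ first countable it suffices to handle a sequence $(t_i,p_i)\to(t,\rho)$, and after passing to subsequences it is enough to treat (a) $p_i\in\operatorname{int}W$ and (b) $p_i=\rho_i\in R_{x_i}$. Case (b) follows from the smooth joint dependence of $D_xf_t$ on $(t,x)$. For (a) I would work in a boundary chart sending $x$ to $0$, $\partial W$ to $\{y=0\}$, $\operatorname{int}W$ to $\{y>0\}$; then $p_i=(x_i,y_i)$ with $y_i\to0^{+}$, $x_i\to0$ and $x_i/y_i\to s$, where $s$ is the slope of $\rho$. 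A first-order Taylor expansion of the smooth map $(x,y)\mapsto f_{t_i}(x,y)$ about the origin, uniform for $t_i$ near $t$, gives $f_{t_i}(p_i)=A^0_{t_i}(x_i,y_i)+o(|(x_i,y_i)|)=A^0_{t_i}(x_i,y_i)+o(y_i)$; since $A^0_{t_i}$ fixes $\{y=0\}$ and carries the inward half-plane to itself (upper-triangular with positive lower-right entry), the base point of $f_{t_i}(p_i)$ tends to $0$ while its slope tends to the slope of $A^0_t\rho$, which is exactly convergence to $A^0_t\rho$ in the Pr\"ufer topology. \emph{The main obstacle I anticipate is precisely this estimate}: one needs the remainder to vanish to order $o(y_i)$, so genuine $C^1$-smoothness of $f$ (supplied by the standing hypothesis that $W$ is smooth and $f$ a smooth flow) is indispensable, and the uniformity in $t$ must be tracked with some care; everything else is bookkeeping.

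Finally, for the two special cases I would read things off the linearised flow. Writing $X$ for the velocity field of $f$ and differentiating the defining ODE along a fixed point $x\in\partial W$ shows $t\mapsto A^x_t$ is the one-parameter group $\exp(t\,D_xX)$ of $D_xX\in\operatorname{End}(T_xW)$. If $f$ has linear speed decay --- meaning $X=\nu\,Y$ with $\nu$ a boundary-defining function and $Y$ nowhere zero and tangent to $\partial W$ along $\partial W$ --- then at $x\in\partial W$ one computes $D_xX\colon\xi\mapsto d\nu_x(\xi)\,Y(x)$, a nilpotent rank-one endomorphism, so $\exp(t\,D_xX)$ is a shear that slides the slope of an inward ray affinely in $t$ --- the windscreen-wiper sweep --- whence $P(f)_t$ has no fixed point on $\partial P(W)$; specialising to $W={\Bbb H}$, $X=y\,\partial_x$, $A^0_t=\left(\begin{smallmatrix}1&t\\0&1\end{smallmatrix}\right)$ recovers the rule ``slope $s\mapsto s+t$'' of Proposition~\ref{Pruefer_flow}. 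If instead the decay is quadratic, $X=\nu^2Z$, then $D_xX=0$ on $\partial W$ by second-order vanishing, so $A^x_t=\mathrm{id}$ for all $t$ and $P(f)_t$ fixes $\partial P(W)$ pointwise; geometrically the leading horizontal displacement of an interior point at height $y$ is of order $t\,\nu^2\sim t\,y^2$, so its trajectory osculates the boundary ray to second order (the parabola with the same tangent), again confirming convergence to the same ideal point. In this quadratic case $P(f)_t$ restricts to the identity on every $R_x$, hence commutes with the reflection $\iota_x$ used to self-glue $R_x$ in the Moorisation, so $P(f)$ descends to a continuous flow $M(f)$ on $M(W)$ which fixes every thorn (the image of $R_x$) pointwise and agrees with $f$ on the interior.
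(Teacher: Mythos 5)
Your proposal is correct and follows essentially the same route as the paper: the paper's (one-sentence) proof defines $P(f)$ on the interior by $f$ itself and on a ray by evolving a representing path-germ under $f$ and taking its tangent at time $t$, which is exactly your action of the differential $A^x_t=D_xf_t$ on inward rays. You merely supply the group-law, continuity, and special-case verifications that the paper leaves implicit.
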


\begin{proof}
We merely
define the Pr\"ufer flow $P(f)$. For a point in the interior
of $P(W)$, just let act the flow $f$. If instead the point is
a ``ray'', choose a tangent vector, represent it by a
path-germ, and let it evolve in time with the flow on $W$
(until the given time $t$ is elapsed), and take its tangent
vector to define the image ray.
\end{proof}

\smallskip
(18)
Since open metric implies a brush, any example in this class
must be compact.
For $g\ge 4$, $N_g$ (closed non-orientable surface of genus
$g$ with $\chi=2-g$) admits a {\it quasi-minimal
flow}\footnote{i.e., all orbits are dense, except for a finite
number of stationary points. (We follow the terminology of
Guti\'errez-Pires \cite{Gutierrez-Pires_2005} ({\it
supertransitive} or {\it highly transitive} are used in the
same or related contexts by other authors).} (see Guti\'errez
\cite[Prop.\,1]{Gutierrez_1978_JDE});
and so belongs to class (19). In contrast for $g=3$, it
is known that $N_3$ has no quasi-minimal flow
(cf. the discussion in Section~\ref{genus_3} below); so
belongs to class (18). In view of
the geographical location of all other closed surfaces (cf.
(19) for $\Sigma_g$, $g\ge 2$), it turns out that $N_3$ is the
unique representant in this class.

\smallskip
(19) Here we have the closed orientable surfaces $\Sigma_g$ of
genus $g\ge 2$ (with $\chi<0$). A quasi-minimal flow is
obtained, by expressing $\Sigma_g$ as a two-sheeted branched
covering of the 2-torus $\pi\colon \Sigma_g \to {\Bbb T}^2$,
and then lifting an irrational flow. This flow on $\Sigma_g$
has only dense orbits, except those corresponding to the
$2(g-1)$
ramification points of the map $\pi$ which are
saddle points.
As discussed in (18), the class (19) contains also the
surfaces $N_g$ with $g\ge 4$, yet not a single open surface.
Hence the class  (19) is also completely classified.

\smallskip
(20) As already discussed this class contains only the
projective plane ${\Bbb R}P^2$. 

\smallskip
(21) Here we meet the Klein bottle ${\Bbb K}$ (intransitive by
Lemma~\ref{Markley:prop}). In the compact case it is the only
example, for the only closed surfaces with a brush (hence
$\chi=0$) are ${\Bbb T}^2$ and ${\Bbb K}$, but the former is
transitive. What about non-compact examples? Using
Lemma~\ref{intransitivity} below, one can certainly take the
punctured Klein bottle.
Granting the {\it inflation principle},  any open set in
${\Bbb K}$ or ${\Bbb R}P^2$
is intransitive (otherwise inflate the set to be dense keeping
its homeomorphism type unchanged, and extend the transitive
flow to the ambient closed surface). Thus, all those open
sets
belong to this class provided not embeddable in ${\Bbb R}^2$
nor equal to ${\Bbb R}P^2$.

\smallskip
(22)
Such an example if it exists must be open. Further by
Beni\`ere's result  (Theorem~\ref{Beniere:thm} below) it must
be non-orientable. Since $N_3$ lacks a quasi-minimal flow
(Section~\ref{genus_3}), $N_3$ punctured once cannot be
quasi-minimal; so $N_3-\text{pt}$ belongs to (22). (Of course
the same applies to $N_3$ minus a finite set.)

\smallskip
(23) As before, an example if it exists must be open and
non-orientable. A candidate is $N_4$ punctured once.

\smallskip
(24) Here we have the ideal ``minimal'' dynamics. In the
compact case we have only ${\Bbb T^2}$ (for H.~Kneser 1924
\cite{Kneser24}  shows that any foliation on ${\Bbb K}$ has a
compact (hence circle) leaf).
Puncturing the torus one
finds many non-compact examples. The following result of
Beni\`ere~\cite{Beniere_1998} provides much more:

\begin{theorem}\label{Beniere:thm}  An open metric surface which is
orientable, yet not embeddable in ${\Bbb S}^2$ has a minimal
flow.
\end{theorem}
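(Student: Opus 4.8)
The plan is to reduce the statement, via the classification of open surfaces, to the Kronecker flow on the torus together with a short catalogue of minimality-preserving surgeries. First I would pin down the hypothesis: by the Ker\'ekj\'art\'o--Richards classification an orientable open surface is determined by its genus $g\in\{0,1,2,\dots,\infty\}$ and its end data, and such a surface embeds in ${\Bbb S}^2$ exactly when it is planar, i.e.\ when $g=0$. So ``not embeddable in ${\Bbb S}^2$'' means precisely $g\ge 1$, and it suffices to equip every orientable open surface of positive genus with a minimal flow.

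The base case is the once-punctured torus: restrict an irrational-slope (Kronecker) flow on ${\Bbb T}^2$, which is minimal, to ${\Bbb T}^2\setminus\{p\}$. Every orbit missing $p$ stays dense, while the orbit through $p$ splits into its two semi-orbits, each of which is dense because in a minimal flow every semi-orbit is dense; hence the restriction is minimal. Puncturing at finitely many points lying on pairwise distinct orbits works the same way. From here I would build the general surface by two surgeries performed inside flow-boxes of an already minimal flow $f$. \emph{Handle addition}: thicken a long orbit segment to a flow-box, excise two disjoint lens-shaped disks whose boundaries are transverse to $f$ away from two tangencies, and glue in a ``through-tube'' modelled on $(-1,1)\times(-1,1)$ flowed along the first coordinate, so that orbits traverse it monotonically; this raises the genus by one without creating a trapped region or a periodic orbit, and since $f$ is minimal the tube is entered by a dense set of orbits. \emph{End addition}: splice in, along a boundary cross-section, a planar ``horn'' carrying a flow all of whose semi-orbits are dense in the horn and which matches the transversal; iterating both surgeries in a locally finite pattern along an exhaustion of $S$, distributed so that the ends accumulated by genus come out correctly, realises the prescribed invariants.

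An alternative route, perhaps cleaner for the end-surgery, is to first manufacture a \emph{minimal orientable one-dimensional foliation} on $S$ --- the surgeries above have more flexible foliated analogues, since a horn can be foliated minimally more freely than it can be flowed --- and then invoke Whitney's theorem (Theorem~\ref{Kerek_Whitney's_flow:thm}) to upgrade the oriented foliation to a compatible flow, which will again have all orbits dense.

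The main obstacle is the verification that the surgeries preserve minimality. One must rule out that the tube or the horn, or its interface with the old flow, harbours any new proper closed invariant set --- a periodic orbit, a Denjoy-type invariant continuum, or a stationary point trapped in a sack --- and for infinite genus or an infinite end space one must also check that the infinitely iterated construction converges to a genuine continuous flow on $S$ (local finiteness of the surgery pattern) which is still minimal (a Baire-type argument: every orbit meets every member of a countable basis, because it repeatedly returns near each inserted tube and horn). Getting the local models and the bookkeeping along the exhaustion right is where the real work lies; the naive move of simply deleting a Cantor set from the torus fails, since such a set sits on a cross-section and is hit repeatedly, cutting many orbits into bounded non-dense arcs.
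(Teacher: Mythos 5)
First, a point of comparison: the paper offers no proof of this statement at all --- it is quoted as a result of Beni\`ere, established in his th\`ese \cite{Beniere_1998} --- so your attempt is being measured against the literature rather than against anything in the text. Your overall strategy (Ker\'ekj\'art\'o--Richards classification, Kronecker flow on the punctured torus as base case, minimality-preserving surgeries along an exhaustion, with the option of first building a minimal oriented foliation and converting it to a flow via Theorem~\ref{Kerek_Whitney's_flow:thm}) is the right shape of argument, and the base case is essentially correct once one detail is repaired: the Kronecker flow does not literally restrict to ${\Bbb T}^2\setminus\{p\}$ (points upstream of $p$ would have to flow through it), so you must first multiply the velocity field by a non-negative function vanishing exactly at $p$, as in the paper's transitivity-preservation-under-puncturing lemma; the two separatrices of the resulting rest point are then reparametrised semi-orbits of the original minimal flow, hence dense.

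The genuine gaps are in the surgeries. (a) Your end-addition model --- a planar ``horn'' carrying a flow all of whose semi-orbits are dense in the horn --- cannot exist: a planar bordered surface is dichotomic, and the Bendixson sack argument (Lemma~\ref{dicho_implies_intransitive}) forbids even a single dense orbit there. What actually happens at a planar end (look at the punctured torus) is that the horn is swept by arcs of \emph{ambient} orbits that enter and leave, together with at most a separatrix converging to the end; arranging this without creating a Reeb component or a trapped sack is a delicate matching condition on the gluing, not a plug prepared in advance. (b) For handle addition you assert, but do not verify, that the through-tube creates no new proper closed invariant set; excluding periodic orbits, Denjoy-type exceptional minimal sets and saddle connections after surgery is precisely the content of Beni\`ere's construction and is the whole theorem, not a remark. (c) For infinite genus or an infinite end space, the closing Baire-type sentence is not an argument: on a noncompact surface an orbit of the limit flow may escape to infinity without ever returning to a given tube, so density of \emph{every} orbit must be engineered (by controlling the itinerary of each orbit through the exhaustion), not merely observed. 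As it stands the proposal is a plausible plan with all of the hard steps left open, and one of its two local models is impossible as described.
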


Lemma~\ref{intransitivity} below implies that the
orientability assumption cannot be relaxed in Beni\`ere's
result (consider a punctured
${\Bbb R}P^2$). (Caution: Beni\`ere's theorem is sometimes
quoted without the orientability proviso; compare
Nikolaev-Zhuzhoma \cite[p.\,xi,
p.\,252]{Nikolaev-Zhuzhoma_1999}.)
Nevertheless, a non-orientable surface
may well admit a minimal flow, as shown by Guti\'errez's
construction \cite{Gutierrez_1978_JDE} of a quasi-minimal flow
on $N_4$ (closed non-orientable surface of genus $4$) with two
hyperbolic saddles as unique singularities (other
non-stationary orbits are dense). Consequently $N_4-\{\text{2
pts}\}$ (two punctures)
has a minimal flow. (Thus, non-orientable manifolds may well
support minimal flows, answering partially a question of
Gottschalk \cite{Gottschalk_1958}. To get a compact example
one must in view of Kneser \cite{Kneser24} move to dimension
$3$, where one can suspend a minimal homeomorphism of the
Klein bottle constructed by Ellis.) A natural problem would be
a complete classification of surfaces with a minimal flow
(specialists are probably quite close to the goal?).

\smallskip
(25) This is the big remaining banlieue: we merely mention
$\Lambda_{g,n}$ for $g\ge 1, n\ge 1$.

\subsection{Transitivity transfers (up and down)}

This section exposes two basic results relevant to
our previous discussion (Section~\ref{Geography}). (We present
arguments in term of vector fields using the smoothing theory
of Guti\'errez, yet one could also work with $C^0$-flows using
Beck's technique, briefly discussed in
Section~\ref{Beck's_technique:section}.)

\begin{lemma}
{\bf (Transitivity preservation under finite puncturing).}
{\it Let $\Sigma$ be any closed transitive surface, then
$\Sigma$ punctured by a finite set $F$ is also transitive.}
\end{lemma}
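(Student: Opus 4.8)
The plan is to reduce to a vector field via Guti\'errez's smoothing theorem and then to ``stop'' the flow at the points of $F$ by a reparametrisation that turns the deleted points into zeros while leaving a pre-existing dense orbit intact. (In the $C^0$-category one runs the same argument with ``multiplication by a non-negative function'' replaced by Beck's time-change technique.)

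First I would invoke Guti\'errez's theorem to replace the given transitive $C^0$-flow on the closed surface $\Sigma$ by a topologically equivalent flow generated by a $C^1$ vector field $X$; topological equivalence carries a dense orbit to a dense orbit, so $X$ is still transitive, with some singular set $\mathrm{Sing}(X)$. Let $T\subset\Sigma$ denote the set of transitive points (those having a dense orbit). The standard Baire-category argument for topologically transitive flows on a second-countable space shows that $T$ is a dense $G_\delta$, hence comeager. On the other hand, the union $W$ of the $X$-orbits through the (finitely many) points of $F$ is a finite union of orbits, each of which is a countable union of compact arcs --- or a circle, or a single point --- hence of nowhere-dense closed pieces, so $W$ is meager in $\Sigma$. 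Therefore $T\setminus W\neq\varnothing$, and I would fix $q\in T\setminus W$: its $X$-orbit $\gamma:=\mathbb{R}q$ is dense in $\Sigma$ and disjoint from $F$ (and, being the orbit of a transitive point, also disjoint from $\mathrm{Sing}(X)$).

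Next I would choose a $C^\infty$ function $\varphi\colon\Sigma\to[0,\infty)$ with $\varphi^{-1}(0)=F$ and $\varphi>0$ elsewhere, and set $Y:=\varphi X$. Since $\Sigma$ is compact, $Y$ is complete; one has $F\subset\mathrm{Sing}(Y)=\mathrm{Sing}(X)\cup F$; and $Y$ has the same oriented trajectories as $X$ away from its zeros, so the $Y$-orbit through $q$ again has image $\gamma$ (a complete $C^1$-trajectory cannot approach, as $t\to\pm\infty$, a point off the singular set, and $\gamma$ avoids $\mathrm{Sing}(Y)$). The open set $\Sigma-F$ is $Y$-invariant, and by uniqueness of integral curves at the zeros $p\in F$ no $Y$-trajectory starting in $\Sigma-F$ can ever reach $F$; hence $Y$ restricts to a complete flow on $\Sigma-F$ admitting $\gamma$ as an orbit, and $\gamma$, being dense in $\Sigma$, is dense in the subspace $\Sigma-F$. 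Thus $\Sigma-F$ is transitive.

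The main obstacle is the middle step: producing a dense orbit that misses $F$ altogether --- for if the chosen dense orbit passed through a point of $F$, stopping the flow there would sever it into two semi-orbits, neither of which need be dense. The comeager-versus-meager dichotomy is exactly what guarantees such an orbit; the remaining ingredients (Guti\'errez smoothing, the bump function, the completeness bookkeeping on the punctured surface) are routine. As a sanity check, for every closed transitive surface \emph{other than} $N_3$ one could sidestep the category argument by starting from a quasi-minimal flow (all non-stationary orbits dense --- compare the constructions behind Lemma~\ref{Markley:prop}), in which deleting finitely many orbits obviously leaves dense ones; the uniform argument above is what is actually needed in order to cover $N_3$ as well, that surface being transitive yet carrying no quasi-minimal flow.
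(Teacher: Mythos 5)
Your proof is correct and follows essentially the same route as the paper's: smooth the flow via Guti\'errez and multiply the velocity field by a non-negative function vanishing exactly on $F$, so that a dense orbit avoiding $F$ survives intact (indeed as the same point set) on the punctured surface. The only divergence is in arranging that the dense orbit misses $F$: the paper simply ``performs the punctures outside'' a chosen dense orbit (implicitly invoking the homogeneity of the surface to move $F$), whereas you keep $F$ fixed and extract a dense orbit avoiding the orbits through $F$ by a Baire-category argument --- a slightly longer but more scrupulous treatment of a step the paper dispatches in one sentence.
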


\begin{proof} We may assume the flow smooth by
Guti\'errez 1986 \cite{Gutierrez_1986}. (On examples
smoothness is satisfied.) Consider $\xi$ the corresponding
velocity vector field.
Let $x\in \Sigma$ have a dense orbit.
We may perform the punctures outside this orbit. Take
$\varphi$ a non-negative smooth function on $\Sigma$ vanishing
exactly on $F$. Integrating the vector field $\varphi \xi$
yields a flow on $\Sigma-F$ whose orbit of $x$ remains dense,
indeed identic to the original trajectory.
\end{proof}

Here is a reverse engineering:

\begin{lemma}\label{intransitivity}
{\bf (Intransitivity preservation under closed
excision).} Assume the closed surface $\Sigma$ intransitive,
then $\Sigma-F$ ($F$ being an arbitrary closed (meagre) subset
of $\Sigma$)
is intransitive as well.
Granting the inflation conjecture {\rm
(\ref{Inflation:conj})}, any surface which embeds
in ${\Bbb S}^2$, ${\Bbb R}P^2$ or ${\Bbb K}$ is intransitive.
\end{lemma}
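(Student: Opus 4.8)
The plan is to prove the first assertion --- that excising a closed meagre set from a closed intransitive surface keeps it intransitive --- and then read off the second as a one-line corollary, the inflation conjecture serving to reduce an arbitrary embedded subsurface to a dense one.

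For the first assertion I would argue contrapositively: supposing $U:=\Sigma-F$ carries a transitive flow, I would build a transitive flow on $\Sigma$, contradicting intransitivity. As $F$ is closed and meagre and $\Sigma$ is Baire, $U$ is nonempty and dense in $\Sigma$, so every orbit dense in $U$ is automatically dense in $\Sigma$; it thus suffices to extend the given flow from $U$ to $\Sigma$ with its orbits (as point-sets) intact. First I would smooth the flow on the metric surface $U$ by Guti\'errez (as in the preceding puncturing lemma), getting a smooth vector field $\xi$ on $U$ with a dense orbit, and --- after fixing a Riemannian metric and replacing $\xi$ by the harmless time change $\xi/(1+|\xi|^2)$ --- I may take $\xi$ bounded. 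Then I would pick a smooth $\varphi\ge 0$ on $\Sigma$ with zero-set exactly $F$, arranged to vanish so rapidly near $F$ that $\varphi\xi$ --- extended by $0$ across $F$ --- is a bona fide $C^\infty$ vector field $\hat\xi$ on the compact surface $\Sigma$. Its flow is complete; on $U$ it is a positive reparametrisation of the $\xi$-flow, and since no non-constant smooth integral curve can reach the zero-set $F$ of $\hat\xi$ in finite time, the $\xi$-dense orbit is carried over unchanged as a set, so $\Sigma$ becomes transitive. (An alternative, announced in Section~\ref{Beck's_technique:section}, skips the smoothing and extends the $C^0$-flow directly across $F$ by a Beck-type reparametrisation stalling the motion as $F$ is approached.)

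The step I expect to resist is this smooth extension across a possibly wildly irregular $F$: verifying that $\varphi$ can be chosen so that $\varphi\xi$ extends $C^\infty$-ly by $0$. I would dispatch it by the standard Whitney-type construction over shells --- on $\{1/(n+1)\le d(\cdot,F)\le 1/n\}$ all derivatives of $\xi$ up to order $n$ are bounded, so $\varphi$ together with all its derivatives may be kept small enough on that shell to kill these blow-ups in the limit --- presented at the level of rigour the paper affords its other smoothing and separation manoeuvres.

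For the second assertion, a surface $S$ embedded in $\Sigma\in\{{\Bbb S}^2,{\Bbb R}P^2,{\Bbb K}\}$ is, by invariance of domain, an open subset of $\Sigma$. If $S=\Sigma$ it is intransitive by Lemma~\ref{Markley:prop}. Otherwise, were $S$ transitive, the inflation conjecture~(\ref{Inflation:conj}) would furnish a dense open $S'\subset\Sigma$ homeomorphic to $S$; transporting the transitive flow along this homeomorphism yields a transitive flow on $S'=\Sigma-F$ with $F:=\Sigma-S'$ closed and meagre, against the first assertion. (The sphere case is unconditional: every open subset of ${\Bbb S}^2$ is planar, hence dichotomic, hence intransitive by Lemma~\ref{dicho_implies_intransitive}; the inflation conjecture is needed only for ${\Bbb R}P^2$ and ${\Bbb K}$, where non-dichotomic open subsets --- e.g.\ a M\"obius band --- genuinely occur.)
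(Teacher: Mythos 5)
Your proposal follows essentially the same route as the paper: assume $\Sigma-F$ transitive, smooth the flow by Guti\'errez, multiply the velocity field by a Whitney function $\varphi\ge 0$ vanishing exactly on $F$, extend by zero, integrate on the compact surface, and conclude from meagreness of $F$ that the surviving dense orbit is dense in all of $\Sigma$; the second assertion is likewise deduced from the first via the inflation conjecture exactly as in the paper's accompanying remark. The only difference is that you supply the justification (bounding $\xi$ by a time change and a shell-by-shell decay estimate on $\varphi$) for the smooth extension of $\varphi\xi$ across $F$, a step the paper simply asserts.
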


\begin{proof} Assume $\Sigma-F$ transitive under the
flow $f$ which we assume
smooth. (This involves the open case of Guti\'errez's
smoothing theory, alternatively use Beck's technique.)
Let
$x\in \Sigma-F$ be a point with dense orbit and let $\xi$ be
the
velocity vector field of the flow $f$. Choose $\varphi\ge 0$ a
non-negative $C^\infty$ function  on $\Sigma$ vanishing
exactly on $F$
(Whitney). Then the vector field $\varphi \xi$ on $\Sigma-F$
admits a smooth extension $\eta$ to $\Sigma$ vanishing on $F$.
Integrating this
field $\eta$ (over the compact manifold $\Sigma$) produces a
flow $f_{\eta}$ on $\Sigma$ such that the orbit of $x$ is
dense in $U=\Sigma-F$ (indeed identic to the original
trajectory of $x$), therefore dense in $\Sigma$.
\end{proof}

Thus, finitely punctured projective planes and Klein bottles
are still intransitive (hence belong to class (21) of the
previous section). (Sharper conclusions are
discussed in the next remark.) In particular ${\Bbb K}$ is not
the unique representant in its class. (Also this shows that
orientability is essential to Beni\`ere's result
(\ref{Beniere:thm}), e.g. the punctured projective plane (of
genus $1$) is intransitive.)

\begin{rem} {\rm We only
proved Lemma~\ref{intransitivity} under the assumption that
$F$ is meagre (empty interior) or what is the same if its
complement is dense. (For the application we made in (15)
above, this weak form was sufficient as the set ${\Bbb
R}V=f({\Bbb R}\times V)$
was dense.)
Yet, to sharpen the method, it
is desirable to
dispose of the {\it inflation principle}
(\ref{Inflation:conj}), to the effect that any non-void open
set of a (separable) manifold can be inflated to a dense
subset while keeping its homeomorphism type intact. Then any
open subset of these two surfaces ${\Bbb R}P^2$ or ${\Bbb K}$
is intransitive (hence belongs to class (21) provided it does
not embed into the plane and is not all ${\Bbb R}P^2$).}
\end{rem}

\subsection{Non-quasiminimality of
$N_3$ (Katok-Guti\'errez)}\label{genus_3}

This section---slightly outside of our main theme---can be
skipped without loosing continuity (its significance lies in
completing our understanding of the metric-side of our Venn
diagram in Section~\ref{Geography}).

Our interest  lies in the following proposition involving
primarily authors like Katok-Blohin, Guti\'errez and
Aranson-Zhuzhoma. The
proofs in the literature are oft
sketchy and in our opinion strangely cross-referenced. [For
instance the statement in Nikolaev-Zhuzhoma \cite[Lemma~7.4.1,
p.\,132]{Nikolaev-Zhuzhoma_1999}
may contain a minor bug\footnote{Since the surface $N_3$ is a
torus with one cross-cap, one can start with a Kronecker flow
on the torus, and deform it around the cross-cap, arranging
the speeds to vanish on the ``boundary'' of the cross-cap.
This gives a transitive flow on $N_3$ with a circle of fixed
points, which corrupts this Lemma~7.4.1. The latter seems
therefore implicitly formulated under the finiteness
assumption for the fixed-point set of the flow.}. This and
other sources (e.g. Aranson {\it et al.}
\cite{Aranson-Zhuzhoma-Telnykh_1998}) observe that the
assertion goes back to  Katok, as
reported in Blohin~\cite{Blohin_1972}, where unfortunately no
details are to be found.] Our argument
lacks in
rigor, yet we could not resist attempting a glimpse into the
boosted Poincar\'e-Bendixson theory of the aforementioned
authors. (Recall a flow is {\it quasi-minimal} if it has
finitely many stationary points and all non-stationary orbits
are dense.)

\begin{prop}\label{folklore_Russo-Peruvian} The closed non-orientable surface of genus $3$,
denoted by $N_3$, has no quasi-minimal flow.
\end{prop}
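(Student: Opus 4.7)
The plan is to argue by contradiction, assuming that $f$ is a quasi-minimal flow on $N_3$. First I would invoke Guti\'errez's smoothing theorem (as already used earlier in the section) to replace $f$ by a smooth flow with velocity vector field $\xi$, and then analyse its finitely many stationary points. None of them can be a source, sink, centre, or any singularity surrounded by an open attracting/repelling basin, since otherwise a dense orbit would be trapped inside a proper closed invariant set; each singularity is thus a topological saddle with $2k$ separatrices and index $1-k\leq-1$. Poincar\'e--Hopf now gives $\sum_p\mathrm{ind}(p)=\chi(N_3)=-1$, which forces exactly one singular point, an ordinary four-prong saddle $s$ of index $-1$ whose four separatrices, being non-stationary orbits, are each dense in $N_3$.

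Next I would pass to the orientation double cover $\pi\colon \Sigma_2\to N_3$ (recall $\chi(\Sigma_2)=-2$), producing the lifted vector field $\tilde\xi=\pi^{\ast}\xi$ with exactly two ordinary saddles $\{s_1,s_2\}=\pi^{-1}(s)$ of total index $-2$, and a fixed-point-free orientation-reversing deck involution $\tau$ commuting with the lifted flow $\tilde f$. Fix a dense orbit $O$ of $f$ and a lift $\tilde O$: its closure $K=\overline{\tilde O}$ is invariant and projects onto $N_3$, so $K\cup\tau K=\Sigma_2$. If $K\subsetneq\Sigma_2$, then $K\cap\tau K$ is a $\tau$-invariant proper closed invariant subset of $\Sigma_2$ projecting to a proper closed invariant subset of $N_3$ containing both saddles, giving a minimal set properly inside $N_3$ and contradicting quasi-minimality of $f$. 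Hence $K=\Sigma_2$, i.e.\ $\tilde f$ is itself quasi-minimal on $\Sigma_2$ with exactly two ordinary saddles, and the remaining contradiction must come from the extra rigidity of $\tau$-equivariance.

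The hard step is this last one, because $\Sigma_2$ \emph{does} admit quasi-minimal flows with two ordinary saddles (classical interval-exchange constructions going back to Katok), so non-orientability of $N_3$ must enter exclusively through the free orientation-reversing involution $\tau$. The strategy I would attempt is to construct a $\tau$-equivariant transversal circle in $\Sigma_2$ whose first-return map is an interval exchange on four intervals, and to observe that $\tau$-equivariance forces this exchange to be conjugate to its orientation-reversed self; a parity/monodromy-permutation argument along the lines of Katok--Blohin~\cite{Blohin_1972}, or alternatively the refined Maier--Katok inequalities quoted in \cite{Aranson-Zhuzhoma-Telnykh_1998}, would then produce a saddle connection between $s_1$ and $s_2$, contradicting density of the four separatrices. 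Making this parity argument airtight is a delicate combinatorial matter handled somewhat sketchily in the literature---precisely the source of the authors' admission that the argument ``lacks in rigor''.
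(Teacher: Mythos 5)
Your opening step coincides with the paper's: the index analysis ruling out sources, sinks, centres and fake saddles, followed by the Poincar\'e(--Hopf) index formula with $\chi(N_3)=-1$, forcing a single four-separatrix saddle of index $-1$. After that the two arguments part ways. The paper stays on $N_3$: it invokes the Peixoto--Guti\'errez lemma to produce a global closed cross-section $C$, observes that cutting along $C$ leaves a connected bordered surface with two contours and odd characteristic $-1$ --- hence an annulus with one cross-cap --- and then chases the four separatrices of the saddle through this explicit picture until one of them is trapped in a Bendixson sack and cannot be dense. You instead pass to the orientation double cover $\Sigma_2$ and aim to exploit the free orientation-reversing deck involution $\tau$ via interval exchanges.

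The genuine gap is that your final step is not an argument but a promissory note. You correctly observe that $\Sigma_2$ \emph{does} carry quasi-minimal flows with two four-prong saddles, so the entire content of the proposition is concentrated in showing that no such flow can be $\tau$-equivariant; and at exactly that point you write that a parity/monodromy-permutation argument ``would then produce a saddle connection'' and concede that making it airtight is delicate and only sketched in the literature. That is the theorem itself, deferred rather than proved. (Note also that a $\tau$-equivariant closed transversal upstairs is essentially the preimage of a closed transversal downstairs, so the detour through $\Sigma_2$ does not by itself simplify the combinatorics; the paper's cut along a global cross-section is the downstairs analogue and at least yields a concrete configuration in which the separatrix-chasing can be attempted.) A secondary slip: from $K\subsetneq\Sigma_2$ you infer ``a minimal set properly inside $N_3$, contradicting quasi-minimality'', but a quasi-minimal flow happily possesses proper minimal sets, namely its fixed points; what you actually need is that either $\pi(K\cap\tau K)$ contains a non-stationary (hence dense) orbit, or else $K\cap\tau K\subseteq\{s_1,s_2\}$, in which case connectedness of $\Sigma_2$ minus two points forces $K=\Sigma_2$. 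That part is repairable; the combinatorial endgame, as written, is not. To be fair, the authors themselves flag their own separatrix argument as lacking rigor, but it does at least reach a concrete terminal configuration, whereas your proposal stops one step earlier.
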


\begin{proof} By contradiction, assume $N_3$
equipped with a quasi-minimal flow.
The finitely many singular points all have a certain index.
Positive indices (in the form of {\it sources} or {\it sinks})
are forbidden as they both imply a small circular
cross-section enclosing the singular point, impeding
transitivity. The case of a {\it center} is likewise excluded.
Singularities of zero-indices (so-called {\it fake saddles})
are removable via a new flow {\it a fortiori} quasi-minimal.
Then all singular points have negative indices. The Poincar\'e
index formula
imposes, as $\chi(N_3)=-1$,
a unique singularity of index $-1$ (a {\it hyperbolic saddle}
with four separatrices).

A lemma of Peixoto-Guti\'errez \cite[Lemma~2,
p.\,312]{Gutierrez_1978_TAMS} gives a global cross-section $C$
to the flow.
This circle $C$ is {\it two-sided}, i.e. its tubular
neighbourhood (being oriented by the flow lines) is an annulus
(not a M\"obius band). Moreover $C$ is not dividing (a global
separation would impede transitivity). Cutting $N_3$ along the
curve $C$ yields a connected bordered surface $W$ with two
contours (boundary-components) with $\chi$ unchanged equal to
$-1$. Since the characteristic of a closed orientable surface
is even ($2-2g$ where $g$ is the genus), and since two
disc-excisions are required to create the two contours of $W$
it follows from the oddness of $\chi(W)$ that $W$ is
non-orientable. Hence $W$ is an annulus with one cross-cap.
The surface we started with, $N_3$, is recovered by gluing
back the two contours. Naively two sewing seem possible,
yet
indistinguishable as $W$ is
non-orientable. For psychological convenience, we fix
the
radial identification (between the two contours of the
annulus).

\begin{figure}[h]
\centering
    \epsfig{figure=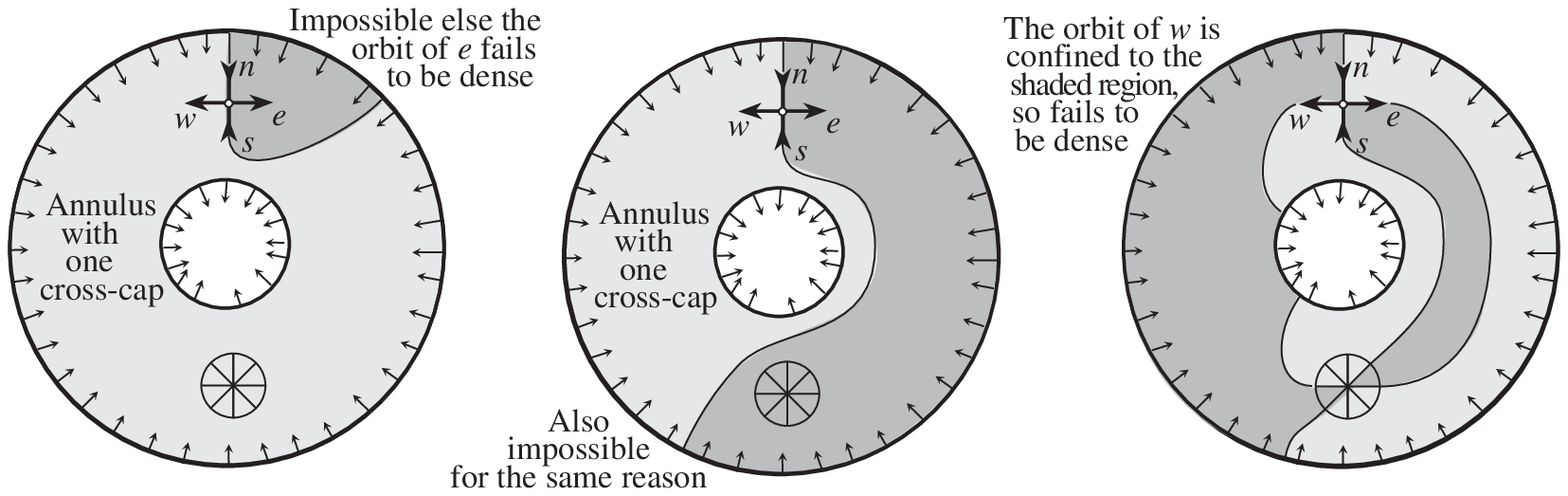,,width=122mm
    }
    \caption{\label{N_3} A heuristic Poincar\'e-Bendixson
    argument (\`a la Guti\'errez)}
\end{figure}

Figure~\ref{N_3} summarizes the situation: disjointly to the
cross-cap is drawn the unique hyperbolic saddle.
The flow is assumed  entrant on the
outer-boundary of $W$ and sortant on the inner-boundary. Let
the saddle be so oriented that its separatrices are directed
in the four cardinal directions North-West-South-East, say
with the North corresponding to an incoming (stable)
separatrix converging to the singularity. Link the north-point
$n$ to the outer-boundary (such a crossing must actually occur
since the orbit of $n$ is dense). The south-point $s$ cannot
move directly to the outer-contour (Figure~\ref{N_3},
left-side); this would impede either the east- or west-point
to fill a dense orbit. Similarly $s$ cannot reach the
outer-contour as on the center-part of Figure~\ref{N_3}, for
in this case $e$ is again trapped in the shaded sub-region. So
$s$ must travel through the cross-cap (Figure~\ref{N_3},
right-side). Draw the forward-orbit of $e$ until it reaches
the inner-contour (while traversing  the cross-cap), and
extend also the forward-orbit of $w$ until it intercepts the
inner-contour. Then the orbit of the west-point $w$ appears to
be
trapped in
the shaded sub-region delimited by
the
four
semi-orbits of the cardinal points (each
extended until its first-passage through the cross-section
$C$), and so fails to be dense. This contradiction provides a
vague completion of the proof. (A complete argument is
certainly implicit in
Guti\'errez~1978~\cite{Gutierrez_1978_TAMS}.)
\end{proof}

\subsection{Densification-Inflation conjecture}
\label{Inflation:sec}

The strong form of Lemma~\ref{intransitivity} (saying that
each open set of an intransitive closed surface is itself
intransitive) would be comforted
by the following:

\begin{conj}\label{Inflation:conj} (Inflation conjecture)  Assume that $M$ is a connected separable
manifold. Then for each non-empty open set $U$ in $M$, there
is an open set $V$ dense in $M$ and homeomorphic to $U$.
\end{conj}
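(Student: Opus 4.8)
The plan is to fatten $U$ into a dense open set by a countable ``finger-pushing'' construction that leaves the homeomorphism type unchanged at each finite stage, and then to control the limit. Without loss of generality $U$ is connected: an open subset of a separable manifold has at most countably many components, and the general case is obtained by inflating them in parallel, routing the fingers of each so as not to meet the others while together covering $M$ in the closure --- bookkeeping, not the essential point. Since $M$ is separable it has a countable dense set $D=\{d_{1},d_{2},\dots\}$; moreover, by the phagocytosis principle \cite{Gauld_2009}, $D$ already lies in a single chart $C\cong{\Bbb R}^{n}$, so $C$ is a dense open $n$-cell. This settles the special case $U\cong{\Bbb R}^{n}$ outright (take $V=C$), and isolates the real content of the conjecture: to absorb a countable dense set of points without disturbing the topological type of $U$.

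The local move is an elementary \emph{finger-push lemma}: if $W\subset M$ is open and connected, $p\in W$ and $q\in M\setminus\overline{W}$, then --- using path-connectedness of $M$ --- there is an embedded arc $\gamma$ from $p$ to $q$ leaving $W$ at once and thereafter avoiding $\overline{W}$, a thin tubular neighbourhood $N$ of $\gamma$ meeting $W$ in a single small ball, and the enlarged set $W':=W\cup N$ is homeomorphic to $W$ by a homeomorphism supported in an arbitrarily small neighbourhood of $\gamma$. The point is that $N$ is a collared ``finger'' attached to $W$ through a flat window, and such a protrusion is absorbed by an Alexander-trick isotopy dragged back along $\gamma$; this is exactly the move underlying Morton Brown's monotone-union theorem \cite{Brown_1962} and its use in phagocytosis. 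The non-metric setting is harmless: $\gamma$ is a compact metric arc, so every neighbourhood of it contains a Lindel\"of (hence metric) one, inside which the tubular-neighbourhood and isotopy-extension machinery is classical.

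Iterating the lemma yields an increasing chain $U=U_{0}\subset U_{1}\subset U_{2}\subset\cdots$ of open sets with $d_{n}\in U_{n}$ whenever $d_{n}\notin\overline{U_{n-1}}$, together with homeomorphisms $\phi_{n}\colon U\to U_{n}$. The union $V:=\bigcup_{n}U_{n}$ is open, contains $D$, hence is dense in $M$; being open in a manifold, $V$ is itself an $n$-manifold, so everything comes down to the homeomorphism claim $V\cong U$. Equivalently, one wants to realise $V$ as $M$ minus a \emph{tame} closed nowhere-dense set through which a copy of $U$ has been threaded.

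The crux --- and the reason for the conjectural status --- is this last step. There is no general theorem that an increasing union of homeomorphic copies of a fixed open manifold is again homeomorphic to it: already an increasing union of open $2$-cells can be an open cell, an open annulus, or an infinite-genus surface, so the type of $\bigcup_{n}U_{n}$ depends delicately on how the copies are nested. One must therefore \emph{engineer} the construction so that it converges --- e.g. by arranging each inclusion $U_{n}\hookrightarrow U_{n+1}$ to be a near-homeomorphism in the sense of M.~Brown, so that the direct limit is homeomorphic to $U$ and $M\setminus V$ is tame; or by a back-and-forth bookkeeping forcing the $\phi_{n}$ to stabilise on an exhaustion of $U$ and thus converge, in the compact-open topology, to a homeomorphism $U\to V$. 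In dimension $2$ with $U$ simply connected one may hope to get this control from the non-metric Schoenflies theorem \cite{GaGa2010}, filling every inessential circle so that fingers behave like boundary collars; but once $\pi_{1}(U)\neq 0$, and \emph{a fortiori} in dimension $\ge 3$, no such normal form for the fingers is at hand, and providing one is precisely the obstacle we cannot clear --- whence Conjecture~\ref{Inflation:conj} is left open.
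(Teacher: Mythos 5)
You have correctly read the situation: the statement is a \emph{conjecture}, and the paper offers no proof of it --- it only verifies the special case where $U$ is a chart, by exactly the route you take (separability gives a countable dense set $D$, phagocytosis engulfs $D$ in a single chart $V$, and that $V$ is the required dense inflate). Your identification of the crux --- that the homeomorphism type of the increasing union $\bigcup_n U_n$ is not controlled --- is also the reason the authors leave the general statement open, so your proposal and the paper are in essential agreement.

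One slip worth fixing: your claimed illustration that ``an increasing union of open $2$-cells can be an open cell, an open annulus, or an infinite-genus surface'' is false, and in fact contradicts the very theorem your argument leans on. Morton Brown's monotone-union theorem (the engine behind phagocytosis, cited in the paper as \cite{Brown_1962}) says precisely that an increasing union of open subsets each homeomorphic to ${\Bbb R}^n$ is again homeomorphic to ${\Bbb R}^n$; this is why the chart case of the conjecture \emph{does} go through. The failure of monotone unions to preserve homeomorphism type only sets in for non-cellular $U$ --- e.g.\ an increasing union of once-punctured tori can have infinite genus --- and that is the honest way to illustrate why your finger-pushing scheme does not close in general. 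With that correction, your assessment stands: the chart case is a theorem, the general case remains Conjecture~\ref{Inflation:conj}.
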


The case where
$U$ is a chart holds true by virtue of the {\it phagocytosis
lemma} in \cite[Prop.\,1]{Gauld_2009}. Indeed, as $M$ is
separable, one may select a countable dense subset $D$ of $M$.
Like any countable subset of a connected (Hausdorff) manifold,
$D$ is contained in a chart $V$ (phagocytosis). This set $V$
fulfils the desiderata of
(\ref{Inflation:conj}).

As
expanded in the next section, an impetus for the {\it
Inflation Conjecture} arises in the construction of transitive
flows on manifolds. Yet, in view of the transitivity of the
number-spaces ${\Bbb R}^n$ $(n\ge 3)$, one can
often bypass the inflation principle to apply instead
phagocytosis.
%
%
In case there should be a failure of (\ref{Inflation:conj}),
then it may hold in special circumstances (like DIFF, metric,
compact, dimension two).
\subsection{Construction of transitive flows:
a
recipe}\label{const_trans_flows:gnal_recipe:sec}

A basic procedure to construct transitive flows on a
(separable) manifold $M$ is the following:

{\sc Step 1.} Find a {\it transistor} (or {\it transifold})
$T$, i.e. a manifold with a transitive flow (typically $T$
will be a torus undergoing some puncture or
the excision of some ``small'' closed set not
jeopardizing its transitivity). Examples are given below.

{\sc Step 2.} Embed (whenever it is possible) the transistor
$T$ in the given manifold $M$.

{\sc Step 3.} Using the
inflation principle (\ref{Inflation:conj})---alternatively
some {\it ad hoc}
construction---arrange the transistor $T$ to be densely
embedded in $M$.
(Sometimes phagocytosis acts as a substitute.)

{\sc Step 4.} ``Extend'' the flow to the full manifold $M$.
Then $M$ will be the desired transitive manifold.
The standard method uses vector fields (hence a smooth
structure), but there is also a $C^0$-version (Beck's
technique) working at least when the manifold is metric
(cf. Lemma~\ref{Beck's_technique:extension:lemma}). One hopes
however that a non-metric version holds, if not in full
generality at least
in special circumstances (maybe when the manifold has nice
functional properties).

In the surface-case (dimension $2$), a ``good'' transistor is
the punctured torus ${\Bbb T}^2_{\ast}={\Bbb T}^2-\{(0,0)\}$.
Subdivide the fundamental domain $[0,1]^2$ in
$3^2=9$ subsquares. Puncturing (the origin) amounts to
delete the $4$ peripheral subsquares, leaving
a ``Swiss cross'' with opposite edges identified, i.e. the
plumbing of two
(open) annuli.
This transistor embeds in
many surfaces (e.g., in all closed surfaces distinct from
${\Bbb S}^2$, ${\Bbb R}P^2$ and ${\Bbb K}$), and in fact in
all metric surfaces distinct from the latter plus their open
subsets. Besides, all strict open sets of ${\Bbb S}^2$ and
${\Bbb R}P^2$ embed in ${\Bbb K}$, for a punctured ${\Bbb
R}P^2$ is a M\"obius band which embeds in ${\Bbb K}$. Thus, we
recover the following classification of transitive metric
surfaces (compare
\cite{Jimenez_2004}):

\begin{prop} \label{trans-surfaces} A metric connected
surface is transitive if and only if it is not homeomorphic to
${\Bbb S}^2$, ${\Bbb R}P^2$ nor embeddable in the Klein bottle
${\Bbb K}$.
\end{prop}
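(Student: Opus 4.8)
The plan is to handle the two implications separately, reducing each to machinery already in place, and to notice that for \emph{closed} surfaces both directions follow at once from Lemma~\ref{Markley:prop} (only $\mathbb{S}^2$, $\mathbb{R}P^2$ and $\mathbb{K}$ are intransitive, and no closed surface other than these three embeds in $\mathbb{K}$); so the substance lies in the non-compact case. A preliminary observation streamlines both directions: the three exclusions ``$S\not\cong\mathbb{S}^2$, $S\not\cong\mathbb{R}P^2$, $S$ not embeddable in $\mathbb{K}$'' are jointly equivalent to ``$S$ embeds in none of $\mathbb{S}^2$, $\mathbb{R}P^2$, $\mathbb{K}$''. Indeed a proper open subset of $\mathbb{S}^2$ misses a point, hence embeds in $\mathbb{R}^2$ and so in a Euclidean chart of $\mathbb{K}$; a proper open subset of $\mathbb{R}P^2$ also misses a point, hence embeds in the M\"obius band $\mathbb{R}P^2\setminus\{\mathrm{pt}\}$, which embeds in $\mathbb{K}$; and $\mathbb{S}^2$, $\mathbb{R}P^2$ cannot embed in $\mathbb{K}$ since a connected closed surface embedded in a surface of the same dimension is the whole of it (invariance of domain plus compactness).

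Necessity is then exactly the contrapositive of the strong form of Lemma~\ref{intransitivity}: any surface embeddable in $\mathbb{S}^2$, $\mathbb{R}P^2$ or $\mathbb{K}$ is intransitive. (For closed surfaces this is Lemma~\ref{Markley:prop}; for subsurfaces of them it rests on Conjecture~\ref{Inflation:conj}, or, in the metric category, on the classical surgery arguments of Peixoto, Blohin and Guti\'errez.) For sufficiency, let $S$ be a connected metric surface embedding in none of the three model surfaces; the closed case being Lemma~\ref{Markley:prop}, take $S$ open. The crucial step is the purely topological assertion that \emph{$S$ contains an open subset homeomorphic to the punctured torus $\mathbb{T}^2_{\ast}$} --- equivalently a compact subsurface homeomorphic to the handle $\Sigma_{1,1}$, equivalently a plumbing of two open annuli. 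I would extract this from the classification of (non-compact) surfaces due to Ker\'ekj\'art\'o \cite{Kerekjarto_1923}: failing to embed in $\mathbb{S}^2$ forces $S$ to have positive orientable genus or at least one cross-cap, and once one also excludes ``genus $0$ with at most two cross-caps'' (which would put $S$ inside $\mathbb{R}P^2$ or $\mathbb{K}$), some compact subsurface must already carry an orientable handle --- this being the mechanism behind the decomposition $N_3\approx\mathbb{T}^2\#\mathbb{R}P^2$ used in the proof of Lemma~\ref{Markley:prop}.

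Having located $\mathbb{T}^2_{\ast}\cong U\subset S$, I would invoke the inflation principle (Conjecture~\ref{Inflation:conj}; $S$ is separable, being metric), or a classical ad hoc densification, to replace $U$ by an open $V\subset S$ dense in $S$ with $V\cong\mathbb{T}^2_{\ast}$, and then run the transistor recipe of Section~\ref{const_trans_flows:gnal_recipe:sec}. Concretely: put an irrational Kronecker flow on $\mathbb{T}^2$ and delete a point $q$; each orbit not through $q$ stays dense in $\mathbb{T}^2\setminus\{q\}$, and multiplying the Kronecker field by a positive smooth function vanishing to high order at $q$ turns it into a \emph{complete} smooth flow on $\mathbb{T}^2\setminus\{q\}\cong V$ (a reparametrisation, so orbits are unchanged as sets, and a dense orbit persists). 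Transport this flow to $V$, take its velocity field, multiply by a smooth $\varphi\colon S\to[0,\infty)$ with $\varphi^{-1}(0)=S\setminus V$ (Whitney), extend by zero across $S\setminus V$ and integrate; completeness is automatic because a non-constant trajectory cannot reach the rest-point locus $S\setminus V$ in finite time. The resulting flow on $S$ retains the chosen orbit of $V$ as a set, which is dense in $V$ and hence in $S$, so $S$ is transitive.

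The one genuinely non-formal point is the topological claim in the sufficiency argument --- that every metric surface outside the three model surfaces houses a punctured torus --- which requires an honest case analysis over the Ker\'ekj\'art\'o invariants (genus, possibly infinite, and arbitrary end spaces). A secondary caveat is that both the necessity direction (via Lemma~\ref{intransitivity} for subsurfaces) and the densification step lean on Conjecture~\ref{Inflation:conj}; in the metric setting at hand these can be routed instead through the classical literature, which is why the statement is only advertised as a comparison with \cite{Jimenez_2004}.
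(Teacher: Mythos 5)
Your proof is correct and follows essentially the same route as the paper: necessity via the (inflation-dependent) universal intransitivity of $\mathbb{K}$, and sufficiency by locating the punctured-torus transistor and running the embed--densify--extend recipe. The paper's own proof is a two-line pointer to that same machinery; you have merely filled in the details it leaves implicit, including the honest acknowledgement that both the densification step and the strong form of Lemma~\ref{intransitivity} lean on Conjecture~\ref{Inflation:conj} (or on the classical metric literature).
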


\begin{proof} $\Rightarrow$
Otherwise,  using an inflation ({\ref{Inflation:conj}})
and an extension
violates the intransitivity of ${\Bbb K}$.

$\Leftarrow$ Such a surface contains a copy of the transistor
so apply the above ``4 steps'' recipe.
\end{proof}

In
dimension $3$, a {\it universal} transistor is the $3$-torus
${\Bbb T}^3={\Bbb R}^3/ {\Bbb Z}^3$ excised along the three
circles
axes $T={\Bbb T}^3-({\Bbb T}^1\!\! \times\!\! \{0\}\!\!
\times\!\! \{0\} \cup \{0\}\!\! \times\!\! {\Bbb T}^1\!\!
\times\!\! \{0\} \cup \{0\}\!\! \times\!\! \{0\}\!\!\times\!\!
{\Bbb T}^1)$. Universality of this transistor refers
to its embedability in Euclidean space ${\Bbb R}^3$ (hence in
all $3$-manifolds). [Indeed think of ${\Bbb T}^3$ as the cube
$[0,1]^3$ with opposite faces identified. Subdivide the
segment $[0,1]$ in $3$ subintervals, and accordingly the cube
in $3^3=27$ subcubes
({\it Rubik's cube}). Deleting the $3$ circles factor
amounts to
suppress all the ``peripheral'' subcubes of the Rubik's cube
(those with at least two visible faces)
leaving $1+6=7$ subcubes. Since opposite faces must be
identified, we get
(an open) cube with three handles, which embeds in ${\Bbb
R}^3$ without resistance.]
Thus, the above recipe
reproduces the following classic
result; compare Oxtoby-Ulam 1941 \cite{Oxtoby-Ulam_1941}
(compact polyhedrons of dimension $\ge 3$), Sidorov 1968
\cite{Sidorov_1968} (transitivity of ${\Bbb R}^n$, $n\ge 3$),
Anosov 1974 \cite{Anosov_1974} (ergodicity of smooth compact
manifolds $M^n$, $n\ge 3$):

\begin{prop}
Any metric connected $3$-manifold
is transitive.
\end{prop}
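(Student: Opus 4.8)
The plan is to run the four-step recipe of Section~\ref{const_trans_flows:gnal_recipe:sec}, exploiting the fact that the metric hypothesis makes $M$ separable and thereby lets us bypass the (unproven) inflation conjecture. First I would record the standing reductions: a connected metric manifold is second countable, hence separable, and a $3$-manifold carries a compatible smooth structure (Moise), so we may work with vector fields; alternatively one works throughout with $C^0$-flows and invokes Beck's technique, which the excerpt notes is available in the metric setting (Lemma~\ref{Beck's_technique:extension:lemma}).

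Second, I would apply phagocytosis: choose a countable dense set $D\subset M$ and engulf it in a chart $V$, so that $V\approx{\Bbb R}^3$ is an \emph{open dense} subset of $M$. This is the point where metrisability does all the work — instead of inflating a bounded ``transistor'' (which would need (\ref{Inflation:conj})), phagocytosis hands us directly a chart diffeomorphic to ${\Bbb R}^3$ that is already dense. Since ${\Bbb R}^3$ is transitive by Sidorov~\cite{Sidorov_1968}, transport a transitive flow to $V$; a point $p\in V$ whose orbit is dense in $V$ has, a fortiori, an orbit dense in $M$.

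Third — the step I expect to be the real labour — extend this flow from $V$ to all of $M$. Let $\xi$ be the velocity field on $V$, and pick (Whitney) a smooth $\varphi\colon M\to[0,\infty)$ vanishing exactly on $M\setminus V$; then $\varphi\,\xi$ is a smooth field on $V$ extending by zero to a smooth field $\zeta$ on $M$. The complement $M\setminus V$ consists of zeros of $\zeta$, so $V$ is $\zeta$-invariant; after pre-multiplying $\xi$ by a suitable positive function (which reparametrises orbits but keeps $p$'s orbit dense) one arranges completeness on $V$, and then $\zeta$ integrates to an honest flow on $M$ — stationary on $M\setminus V$ and a time-change of the old flow on $V$. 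The orbit of $p$ is unchanged as a set, hence dense in $V$ and therefore in $M$; thus $M$ is transitive. In the $C^0$-register the same extension is precisely Beck's technique.

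The main obstacle is exactly this extension bookkeeping: one must check that slowing the flow near the frontier of $V$ (i) preserves invariance of $V$, (ii) does not destroy the density of the chosen orbit, and (iii) yields a \emph{complete} flow on $M$. All three are routine once $\varphi$ and the auxiliary speed factor are chosen with a little care — these are the very ingredients already used in Lemma~\ref{intransitivity} and in the puncturing lemmas — so no genuinely new difficulty arises; metrisability enters only through the dense chart (phagocytosis) and through the smoothing/Beck machinery that makes the extension legitimate.
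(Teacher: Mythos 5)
Your argument is exactly the route the paper intends: the four-step recipe with phagocytosis supplying a dense chart $V\approx{\Bbb R}^3$ (so the inflation conjecture is not needed), Sidorov's transitivity of ${\Bbb R}^3$ as the transistor, and the extension to $M$ via the vector-field slow-down / Beck's technique (Lemma~\ref{Beck's_technique:extension:lemma}), which is legitimate precisely because $M$ is metric. The completeness and continuity caveats you flag in the extension step are real but are handled at the same level of detail as in the paper's Lemma~\ref{intransitivity}, so nothing is missing.
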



Eventually, the ultimate generalisation could be:

\begin{conj}\label{transitivity:conj} Any separable connected $3$-manifold (or of higher
dimensions) is transitive.
\end{conj}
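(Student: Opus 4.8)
The plan is to follow the four-step recipe of Section~\ref{const_trans_flows:gnal_recipe:sec}, with the simplification that in dimension $\ge 3$ phagocytosis already supplies the dense ``transistor'' for free, so that Steps~1--3 collapse into a single application of \cite[Prop.\,1]{Gauld_2009}. Concretely, since $M$ is separable, choose a countable dense subset $D\subset M$; by the phagocytosis principle, $D$ is engulfed in a chart $W\subset M$, and then $W\approx{\Bbb R}^n$ is a \emph{dense} open subset of $M$ (because $\overline W\supseteq \overline D=M$). By the Oxtoby--Ulam/Sidorov theorem \cite{Oxtoby-Ulam_1941}, \cite{Sidorov_1968}, the number-space ${\Bbb R}^n$ ($n\ge 3$) carries a transitive flow; transporting it along $W\approx{\Bbb R}^n$ produces a transitive flow $g$ on $W$ whose dense orbit, by density of $W$, is automatically dense in the whole of $M$. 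Thus the conjecture is reduced to Step~4: extending $g$ to a flow on $M$ without spoiling transitivity.

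For Step~4 I would attempt a non-metric analogue of Beck's technique (compare Section~\ref{Beck's_technique:section} and Lemma~\ref{Beck's_technique:extension:lemma}): pick a continuous cut-off function $\varphi\colon M\to[0,1]$ vanishing \emph{exactly} on the residual set $M-W$, reparameterise $g$ by slowing it down by the factor $\varphi$ (so orbits stall as they approach the frontier $\overline W-W$), and glue on the stationary flow on $M-W$. Since $W$ is dense, $M-W$ has empty interior, so the reparameterised orbit that was dense in $W$ remains dense in $M$; the resulting glued flow $\tilde g$ on $M$ is then transitive, as required. Note that this route bypasses entirely the Inflation Conjecture~\ref{Inflation:conj}, which is only needed in the surface case (Proposition~\ref{trans-surfaces}) where charts are not transitive.

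The hard part --- and the reason this remains only a conjecture --- is twofold and lodged entirely in Step~4. First, one must produce the cut-off $\varphi$, i.e.\ realise the closed set $M-W$ as the zero-locus of a continuous real-valued function; for metric manifolds this is automatic (perfect normality), but for general non-metric manifolds it is exactly the kind of statement sensitive to axioms beyond ZFC (the Rudin--Zenor perfectly-normal-manifold circle of ideas). Second, even granting $\varphi$, one must check that the reparameterised-and-glued flow is genuinely continuous across the frontier $\overline W-W$, and a \emph{wild} topology of $M$ at infinity (outside the phagocytosing dense chart) may troubleshoot precisely this continuity. A realistic partial target is therefore to establish the conjecture under a perfect-normality hypothesis on $M$ (or in favourable set-theoretic universes), or under a structural tameness assumption ensuring that $M-W$, hence the growing mode of $M$ at infinity, decomposes into ``civilised'' pieces amenable to a direct $C^0$ construction of the extension.
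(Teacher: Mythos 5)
The statement you are addressing is a \emph{conjecture} in the paper, not a theorem: the authors offer no proof, and your proposal does not (and honestly does not claim to) close the gap either. That said, your strategy coincides exactly with the route the paper has in mind. The paper's own discussion around Conjecture~\ref{transitivity:conj} says precisely what you say: in dimension $\ge 3$ the inflation principle is superseded by phagocytosis (the dense chart $W\approx{\Bbb R}^n$ engulfing a countable dense set), the chart is made transitive via Oxtoby--Ulam/Sidorov, and ``the only difficulties appear to be located in the reparametrisation paradigm,'' i.e.\ in a non-metric version of Beck's extension technique (Lemma~\ref{Beck's_technique:extension:lemma}, which is stated only for locally compact \emph{metric} spaces). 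You have correctly located the gap in Step~4 and correctly tied the existence of the cut-off function to perfect normality, which is exactly the partial-result direction the paper itself suggests (``relating perhaps the question to perfect normality'').

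Two small remarks to sharpen your awareness of why Step~4 is genuinely dangerous rather than merely technical. First, Example~\ref{Beck's_disruption} shows that the companion restriction problem for Beck's technique really does fail non-metrically (via the eventual constancy of real-valued functions on the long ray), so metric sensitivity of the reparametrisation yoga is not hypothetical. Second, even granting a continuous $\varphi$ vanishing exactly on $M-W$, the reparametrisation is not a pointwise multiplication of ``speed'' by $\varphi$ but an integral time-change, and the continuity of the glued flow at frontier points of $\overline W-W$ is where local compactness and metrisability enter Beck's argument; a wild topology of $M$ outside the phagocytosing chart could obstruct exactly this, as the paper's closing discussion of ``barbarians'' warns. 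So your reduction is the correct one, but the conjecture remains open after it, in your write-up just as in the paper.
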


This
sounds blatantly optimistic,
yet the only difficulties appear to be located in the
reparametrisation paradigm (i.e., Beck's technique briefly
discussed in the next section)---recall that in dimension $\ge
3$, the {\it inflation method} (\ref{Inflation:conj})
is superseded by {\it phagocytosis}
(since ${\Bbb R}^3$ is a transistor, e.g. by an {\it ad hoc}
inflation of the cube with $3$ handles, or via
Sidorov~\cite{Sidorov_1968}).

\subsection{Beck's technique (plasticity of flows)
}\label{Beck's_technique:section}

A
basic desideratum,
when dealing with flows, is
a two-fold
yoga
of ``restriction'' and ``extension'':

(1) {\it Given a flow on a space $X$ and an open subset
$U\subset X$,
find a flow on $U$ whose phase-portrait is the trace of the
original one}; and conversely:

(2)
{\it Given a flow on $U$,
find a flow on $X\supset U$ whose phase-portrait restricts to
the given one.}

\smallskip
Thus, one expects that any open set of a brushing is itself a
brushing, and that any separable super-space of a transitive
space is
likewise transitive, provided the sub-space is dense (or
becomes so, after a suitable inflation).

Problem (1) is
solved in Beck~\cite{Beck_1958}, when $X$ is metric.
(Example~\ref{Beck's_disruption} below indicates a non-metric
disruption.)
The same  technique of Beck (clever time-changes afforded by
suitable integrations),
solves Problem~(2) in the metric case (compare \cite[Lemma
2.3]{Jimenez_2004}):

\begin{lemma}\label{Beck's_technique:extension:lemma} Let $X$ be a locally compact metric
space and $U$ and open set of $X$. Given a flow $f$ on $U$,
there is a new flow $f^{\star}$ on $X$ whose orbits in $U$ are
identic to the one under $f$.
\end{lemma}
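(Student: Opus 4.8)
The plan is to reduce Problem (2) to Beck's solution of Problem (1) by a doubling trick, and then implement the time-change directly. Since $X$ is locally compact metric, hence Tychonoff (indeed metric), I can find a continuous function $\lambda\colon X\to[0,1]$ with $\lambda^{-1}(0)=X-U$ and $\lambda>0$ on $U$ (Urysohn, using that $X-U$ is closed and, being metric, $X$ is perfectly normal so the closed set is a zero-set). The velocity-like data I need is not a vector field (we are in the $C^0$ world), so I cannot simply multiply by $\lambda$; instead I follow Beck's \emph{integration} device: along each orbit of $f$ in $U$, reparametrise time by the integral of $1/\lambda$ (suitably normalised) so that the new orbits still sweep out the same point-sets in $U$, but a point approaching the frontier $\partial U$ takes infinite time to get there. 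Concretely, for $x\in U$ set $\sigma_x(t)=\int_0^t \frac{ds}{g(f(s,x))}$ where $g$ is a positive continuous function on $U$ that tends to $0$ fast enough near $\partial U$ and near infinity in $X$ (local compactness lets me control the ``escape to infinity'' end too); then define $f^\star(t,x)$ on $U$ by $f^\star(\sigma_x(t),x)=f(t,x)$, i.e. $f^\star(\tau,x)=f(\sigma_x^{-1}(\tau),x)$, and extend by $f^\star(\tau,p)=p$ for $p\in X-U$.

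Then I would verify the three things that make this work. First, \emph{well-definedness and the group law} on $U$: this is the standard Beck computation — $\sigma_x$ is a strictly increasing homeomorphism of $\mathbb R$ (or of a maximal interval) onto $\mathbb R$ provided $g$ decays fast enough that both tails of the integral diverge, and the cocycle identity $\sigma_x(t+t')=\sigma_x(t)+\sigma_{f(t,x)}(t')$ gives $f^\star_\tau\circ f^\star_{\tau'}=f^\star_{\tau+\tau'}$. Second, \emph{continuity at interior points}: immediate, as $f^\star$ is built from continuous pieces on the open set $\mathbb R\times U$. Third — and this is \textbf{the main obstacle} — \emph{continuity at the frontier points} $p\in\partial U$ and at infinity: I must choose $g$ so that, as $x\to p$ (with $p\in X-U$), the rescaled flow $f^\star(\tau,x)$ converges to $p$ uniformly for $\tau$ in compact sets. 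This is exactly where local compactness and metrisability are used: pick a proper metric-compatible exhaustion of $U$ by compact sets $K_n$ with $K_n\subset\operatorname{int}K_{n+1}$ and $U=\bigcup K_n$, and let $g$ be a positive continuous function with $g\le 2^{-n}\operatorname{dist}(\cdot,X-U)$ on $K_{n+1}-\operatorname{int}K_n$; then the time to cross from $K_n$ to $K_{n+1}$ under $f^\star$ is at least of order $2^{n}$, so a point very near $\partial U$ cannot travel far in bounded $f^\star$-time, which forces $f^\star(\tau,x)\to p$.

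To package the frontier continuity cleanly I would state it as a small lemma: if $x_k\to p\in X-U$ in $X$ and $\tau_k\to\tau$ in $\mathbb R$, then $f^\star(\tau_k,x_k)\to p$. Proof sketch: fix a compact neighbourhood $N$ of $p$ in $X$ (local compactness) and an $n$ with $\tau\in(-2^{n-1},2^{n-1})$; the estimate on $g$ shows that any orbit starting inside $N\cap U$ near enough to $p$ stays within $N$ for $f^\star$-time up to $2^{n-1}$, while if $x_k$ is eventually in $X-U$ we are done trivially since then $f^\star(\tau_k,x_k)=x_k\to p$. Combining the interior continuity with this boundary lemma, $f^\star$ is continuous on all of $\mathbb R\times X$, and by construction its orbits inside $U$ are exactly the orbits of $f$ (only the parametrisation changed), which is the assertion. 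The one point demanding care in writing it up is that the maximal interval of definition of $\sigma_x$ really is all of $\mathbb R$ — i.e. $f$-orbits in $U$ do not already ``end'' in finite time — but that is automatic since $f$ is a genuine $\mathbb R$-action on $U$, so $\sigma_x$ is defined for all $t$ and I only need both improper integrals $\int_0^{\pm\infty} ds/g(f(s,x))$ to diverge, which the choice of $g$ guarantees whenever the orbit is unbounded in $U$ (and when the orbit has compact closure in $U$, $g$ is bounded below on it, so $\sigma_x$ is a bi-Lipschitz rescaling and there is nothing to check).
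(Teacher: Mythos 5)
Your setup is Beck's: damp the flow by integrating $1/g$ along orbits for a positive continuous $g$ on $U$ vanishing at $X-U$, and extend by the identity. The cocycle identity, the surjectivity of $\sigma_x$ (granting $g\le 1$), and continuity on ${\Bbb R}\times U$ are all fine. The gap is the frontier-continuity step, and it is fatal as written: your $g$ is manufactured from the geometry of $U\subset X$ alone (a compact exhaustion $K_n$ and ${\rm dist}(\cdot,X-U)$), with no reference to $f$, and no such choice can work. The claim that ``the time to cross from $K_n$ to $K_{n+1}$ under $f^\star$ is at least of order $2^n$'' does not follow from $g\le 2^{-n}{\rm dist}(\cdot,X-U)$: the new-time cost of a piece of orbit is $\int_{t_1}^{t_2}ds/g(f(s,x))$, and although the integrand is large there, the old-time duration $t_2-t_1$ of the crossing is dictated by $f$ and may be arbitrarily small. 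More seriously, continuity at $p\in X-U$ requires controlling motion \emph{parallel} to the frontier, which no transverse crossing estimate sees.

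A concrete defeater: let $X$ be the closed unit disc, $U$ its interior, and $f(t,re^{i\vartheta})=re^{i(\vartheta+t\psi(r))}$ with $\psi(r)\to\infty$ as $r\to1$. Orbits are the circles $|z|=r$, and under your $f^\star$ the new time for one full turn at radius $r$ is at most $2\pi/\bigl(\psi(r)\min_{|z|=r}g(z)\bigr)$. For any $g$ fixed in advance of $f$, choose $\psi$ blowing up faster than $1/\min_{|z|=r}g(z)$; then this turn-time tends to $0$, so for fixed $\tau\neq0$ the map $f^\star_\tau$ winds small circles near the boundary arbitrarily many times and cannot extend continuously to the identity on $\partial U$ (take $x_k\to p$ positioned so that $f^\star(\tau,x_k)$ is antipodal to $x_k$). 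This is precisely the trap the authors are aware of: the damping function must be adapted to the flow, not just to the space --- e.g.\ chosen to dominate the local displacement $\sup_{|s|\le1}d(f(s,x),x)$, or built from the largest $t$ for which $f([-t,t]\times\{x\})$ stays in a ball of radius comparable to $d(x,X-U)$. The paper itself offers no proof beyond deferring to Lemma~2.3 of \cite{Jimenez_2004}, where the time-change is constructed in this flow-dependent way; to repair your argument you must replace the purely geometric $g$ by such a flow-dependent one and then rerun your boundary lemma.
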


Tackling
Conjecture~\ref{transitivity:conj} seems to involve an
understanding of how much of the ``extended'' Beck technique
(2) holds non-metrically.
Even if
the full swing of
(2) should fail,
there is certainly much room for partial results, say for
separable $3$-manifolds with a civilised geometry in the large
(like 3D-avatars of the Pr\"ufer or Moore manifolds, or
perhaps those having sufficiently many functions, relating
perhaps the question to {\it perfectly normality}).

\begin{exam}\label{Beck's_disruption}
{\rm Let $X={\Bbb R}\times {\Bbb L}_+$ be equipped with the
natural (translation) flow $f$ along the first real factor,
and consider $U=X-F$ the open set residual to $F=\{0\}\times
\omega_1$. Then there is no flow $f_{\ast}$ on $U$ whose
orbit-structure is the restriction of the one of $f$ to $U$.
[Using the {\it ad~hoc} method of Section~\ref{ad_hoc:sec}, it
may be shown that the
surface $U=X-F$ is not a brushing.]}
\end{exam}

\begin{proof}
Assume by contradiction the existence of $f_{\ast}$.
Chronometer the time $\tau(x)$ required for a point $(-1,x)$
starting from the cross-section $\Sigma_{-1}:=\{-1\}\times
{\Bbb L}_+$ to reach $\Sigma_{1}:=\{1\}\times {\Bbb L}_+$
under $f_{\ast}$. This defines a function $\tau\colon {\Bbb
L}_+ \to {\Bbb R}_{\ge 0} \cup \{ \infty\}$ by letting
$\tau(x)=\infty$ if $x\in \omega_1$.
Post-composing
$\tau$ with an arc-tangent function (extended by mapping
$\infty$ to $\pi/2=:1$, for
simplicity!) gives a continuous function $\tau_{\ast}\colon
{\Bbb L}_+ \to [0,1]$. Thus, $\tau_{\ast}=1$ on $\omega_1$,
yet $<1$ outside, violating the fact that a real-valued
continuous function on the long ray
is eventually constant.
\end{proof}

\subsection{Morse-Thom
brushes for metric open $C^0$-manifolds, and Poincar\'e-Hopf}
\label{Morse-Thom:sec}

It is well-known, in the differentiable case at least, that an
open
metric manifold carries a critical-point free Morse function.
(This reminds us the name of Thom, but are unable to recover
our source!)
At any rate, a proof is provided in M.\,W.
Hirsch~\cite[p.\,571]{Hirsch_1961}, where rather the influence
of Henry Whitehead is
emphasized. In principle the result should extend to the
topological case, to give:

\begin{prop}\label{Morse-Thom} Any open
metric
$C^0$-manifold has a brush (where each orbit is a line).
\end{prop}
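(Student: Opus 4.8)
The plan is to extract the brush from a \emph{critical-point free Morse function} $\varphi\colon M\to{\Bbb R}$, i.e.\ a submersion onto the line, whose existence for an open metric manifold is the theorem of Hirsch~\cite[p.\,571]{Hirsch_1961} recalled above. I would first settle the smooth case, which also covers $C^0$-manifolds of dimension $\le 4$: in dimensions $n\le 3$ a smooth structure exists and is essentially unique (Rad\'o, Moise), and open $4$-manifolds are smoothable by Freedman--Quinn, whereas in dimension $n\ge 5$ one must work in pure $\TOP$ and invoke instead the topological counterpart of Hirsch's submersion theorem (Kirby--Siebenmann handle theory). So assume $M$ smooth, fix a \emph{complete} auxiliary Riemannian metric, and set $X=\nabla\varphi/\|\nabla\varphi\|^{2}$: this is a nowhere-vanishing smooth vector field, everywhere transverse to the fibres of $\varphi$ and with $d\varphi(X)\equiv 1$; after replacing $X$ by $X/(1+\|X\|)$ one keeps $d\varphi(X)>0$ while making $X$ complete (bounded norm against a complete metric).

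Next I would integrate $X$ into a genuine ${\Bbb R}$-action $f\colon{\Bbb R}\times M\to M$. Since $X$ never vanishes, $f$ has no stationary point, so it is a brush in the sense of Definition~\ref{Beck_brush:def}. Moreover $\varphi$ is strictly increasing along every orbit, so no orbit is periodic; and as $X$ is transverse to the fibres of $\varphi$, the restriction of $\varphi$ to any orbit is an injective local homeomorphism onto ${\Bbb R}$, hence a homeomorphism onto an open interval --- so each orbit is a line, settling the parenthetical refinement. This completes the proof whenever $M$ admits a compatible smooth structure.

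For a general $C^0$-manifold of dimension $n\ge 5$, differential calculus being unavailable, I would route the argument through foliations instead. The $\TOP$-submersion $\varphi\colon M\to{\Bbb R}$ is locally the projection ${\Bbb R}^{n}\to{\Bbb R}$, so its fibres form a codimension-one topological foliation; one then produces a one-dimensional oriented foliation ${\cal F}$ transverse to these fibres and oriented so that $\varphi$ increases along its leaves, whence (by the same transversality reason) every leaf is carried homeomorphically onto an open interval of ${\Bbb R}$, hence is a line. Feeding ${\cal F}$ into Whitney's theorem~(\ref{Kerek_Whitney's_flow:thm}) --- legitimate since $M$ is a metric $C^0$-manifold --- yields a flow whose orbits are exactly the leaves of ${\cal F}$; these never being points, the flow is a brush, and its orbits are lines. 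The main obstacle, and the reason the topological case is here only claimed ``in principle'', is precisely the construction of this transverse $1$-foliation ${\cal F}$ --- equivalently, lifting the ``vertical flow'' of the topological submersion $\varphi$: in $\DIFF$ this is automatic ($1$-plane fields are always integrable), but in $\TOP$ it rests on the transversality/microbundle technology of Kirby--Siebenmann (and Siebenmann's deformation results), so a fully self-contained argument would have to develop or quote that input with care.
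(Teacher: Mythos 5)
Your argument is correct, and in the hard (high-dimensional, purely topological) case it is the same route the paper takes: topological submersion onto ${\Bbb R}$ $\Rightarrow$ codimension-one foliation by its fibres $\Rightarrow$ Siebenmann's transversality \cite[Thm 6.26]{Siebenmann72} to extract a transverse one-dimensional foliation, oriented by increasing values of the submersion $\Rightarrow$ Whitney's theorem (\ref{Kerek_Whitney's_flow:thm}) to realise it by a flow, whose orbits are lines because the submersion is strictly monotone along them. The genuine difference is your treatment of the smoothable case (all dimensions $\le 4$, and any $M$ carrying a smooth structure): there you bypass foliations, Siebenmann and Whitney entirely by integrating the bounded, complete field $\nabla\varphi/\bigl(\|\nabla\varphi\|^2(1+\|\nabla\varphi\|^{-1})\bigr)$ against a complete metric. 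This is exactly the ``$C^0$-theory of gradient flows'' shortcut that the paper only muses about after its proof, and it buys a more elementary and more dynamical construction where it applies; the cost is that you must still fall back on the Kirby--Siebenmann/Quinn machinery for $n\ge 5$, so the ``common soup of technologies'' is not avoided in general. Two small points of bookkeeping: Hirsch \cite{Hirsch_1961} gives the critical-point-free Morse function only in {\sf DIFF}, so in the topological setting you still owe the reader the paper's intermediate step (a topological Morse function, with \cite{Quinn82} in dimension $4$, whose critical points are then removed by boring arcs to infinity) rather than a ready-made ``topological Hirsch theorem''; and in your smooth case the reparametrised field satisfies $d\varphi(X)>0$ but not $d\varphi(X)=1$, so $\varphi$ maps an orbit onto a possibly bounded open interval --- which, as you correctly conclude, still forces each orbit to be a line.
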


\begin{proof} By results of Morse, Kirby-Siebenmann
and Quinn---same references as in the proof of
\cite[Theorem~1.4]{BGG}---it is known
that there is a topological Morse function (the
$4$-dimensional case requires Quinn's smoothing of open
$4$-manifolds \cite{Quinn82}). One can
alter the Morse function to have no critical points (via the
usual trick of boring arcs, homeomorphic to $[0,\infty)$,
escaping to infinity\footnote{Laurent Siebenmann calls them
``ventilators''.}).
Like any submersion, this critical point free Morse function,
$f$, defines a codimension-one foliation, to which we may
apply Siebenmann's transversality \cite[Thm 6.26,
p.\,159]{Siebenmann72} to obtain a dimension-one foliation
transverse to the levels of $f$. The latter $1$-foliation is
clearly orientable (indeed oriented by increasing values of
$f$). By Whitney \cite{Whitney33} there is a compatible flow
for this foliation, which is the required brush. Conceivably,
a $C^0$-theory of gradient flows may bypass foliations---thus,
both Siebenmann and Whitney---yet going in the details will
probably involve a common soup of
technologies.
(Of course, such ``gradient'' flows are dynamically very
particular: each orbit of is a line (restrict
$f$ to the orbit), without ``recurrences'', and with plenty of
global cross-sections (any level-hypersurface of $f$.)
\end{proof}

Let us briefly discuss---without the pretention of proving
anything---a
possible relevance of (\ref{Morse-Thom}) to the  $C^0$-avatar
of {\it Heinz Hopf's brushes} (i.e., the hypothetical
existence of non-stationary flows on closed $C^0$-manifolds
with vanishing Euler character,
$\chi=0$). First, it is conceivable that to any
$C^0$-flow with isolated singularities on a closed
$C^0$-manifold  one may---despite the lack of vector field
interpretation---assign
{\it indices} (also via the Brouwer degree); compare the
procedure of \Kerekjarto \cite[p.\,109]{Kerekjarto_1925} in
the surface case, and also Dieudonn\'e
\cite[p.\,200]{Dieudonne}. Second, the {\it Poincar\'e-Hopf
index formula} is likely to hold, i.e. indices add up to the
characteristic of the manifold: like
by Italian geometers,
 why not just trying to
take advantage of the flow to push slightly the diagonal
$\Delta \subset M\times M$ into general position, to draw the
index formula from the two-fold evaluation (algebraic vs.
geometric) of the self-intersection
number $\Delta^2$. [If not really convinced---owing to a lack
of
foundations---translate
 the geometric
intuition into the cohomological language (of Moscow 1935:
Alexander-Kolmogoroff-Whitney-\v{C}ech).] This would
validate the index formula when $M$ is orientable, and the
general case follows by passing to the orientation covering.
Third, given any closed $C^0$-manifold, $M$, puncture it once
at a point, $p\in M$, to make it open and apply
Proposition~\ref{Morse-Thom} to get a nonsingular flow on
$M-\{p\}$. Using Beck's technique
(Lemma~\ref{Beck's_technique:extension:lemma}) the flow can be
extended to $M$ by fixing the point $p$. This would show that
{\it any closed $C^0$-manifold has a ``mono-singular'' flow},
i.e., with a unique rest-point (well-known in the
smooth case, \cite{Alexandroff-Hopf_1935}).
Finally, in case $\chi(M)=0$, one is tempted to claim that the
unique singular point, having zero index, is
removable. This would establish the desideratum.

{\small

}

{
\hspace{+5mm} 
{\footnotesize
\begin{minipage}[b]{0.6\linewidth} Alexandre
Gabard

Universit\'e de Gen\`eve

Section de Math\'ematiques

2-4 rue du Li\`evre, CP 64

CH-1211 Gen\`eve 4

Switzerland

alexandregabard@hotmail.com
\end{minipage}
\hspace{-25mm}
\begin{minipage}[b]{0.6\linewidth}
David Gauld

Department of Mathematics

The University of Auckland

Private Bag 92019

Auckland

New Zealand

d.gauld@auckland.ac.nz
\end{minipage}}

}

\end{document}